\documentclass[10pt,a4paper,twoside,reqno]{amsart}

\usepackage[a4paper,
left=1in,right=1in,top=1.2in,bottom=1.2in,marginparwidth=2cm,%
footskip=.25in]{geometry}

\usepackage[textsize=footnotesize]{todonotes}

\usepackage{bm}
\usepackage{latexsym, amsmath, amstext, amssymb, amsfonts, amscd, bm, array, multirow, amsbsy, mathrsfs, stmaryrd}
\usepackage{amsthm}
\usepackage{t1enc}
\usepackage[mathscr]{eucal}
\usepackage{indentfirst}
\usepackage{pb-diagram}
\usepackage{graphicx}
\usepackage{fancyhdr}
\usepackage{fancybox}
\usepackage{color}
\usepackage{tikz-cd}
\usetikzlibrary{arrows}
\usepackage[all]{xy}
\usepackage{hyperref}
\usepackage{tikz}
\usepackage{xparse}
\hypersetup{colorlinks=false,pdfborderstyle={/S/U/W 0}}
\usetikzlibrary{matrix}
\usepackage{upgreek}
\usepackage[utf8]{inputenc}
\usepackage[T1]{fontenc}
\usepackage{bbm}
\usepackage{enumitem}
\setlist[enumerate]{leftmargin=*,noitemsep}
\setlist[itemize]{leftmargin=*,noitemsep}
\usepackage{float}
\usepackage{soul}

\newcommand{\doi}[1]{\url{https://doi.org/#1}}
\usepackage[noadjust]{cite}

\makeatletter
\def\@defaultbiblabelstyle#1{[#1]}
\makeatother

\usepackage{url}
\theoremstyle{plain}
\newtheorem{thm}{Theorem}[section]

\newtheorem{prop}[thm]{Proposition}

\newtheorem{lemma}[thm]{Lemma}
\newtheorem{cor}[thm]{Corollary}

\theoremstyle{definition}
\newtheorem{definition}[thm]{Definition}

\theoremstyle{remark}
\newtheorem{remark}[thm]{Remark}
\newtheorem{example}[thm]{Example}

\newtheorem*{ack}{Acknowledgements}

\newcommand{\End}{\mathrm{End}}

\DeclareFontFamily{U}{rsf}{}
\DeclareFontShape{U}{rsf}{m}{n}{<5> <6> rsfs5 <7> <8> <9> rsfs7 <10-> rsfs10}{}
\DeclareMathAlphabet\Scr{U}{rsf}{m}{n}

\def\Z{\mathbb{Z}}
\def\C{\mathbb{C}}
\def\R{\mathbb{R}}

\def\deg{{\rm deg}}

\def\dd{\mathrm{d}}

\def\Id{\mathrm{Id}}

\newcommand{\be}{\begin{equation*}}
\newcommand{\ee}{\end{equation*}}
\newcommand{\ben}{\begin{equation}}
\newcommand{\een}{\end{equation}}
\newcommand{\beqa}{\begin{eqnarray*}}
\newcommand{\eeqa}{\end{eqnarray*}}
\newcommand{\beqan}{\begin{eqnarray}}
\newcommand{\eeqan}{\end{eqnarray}}

\def\scB{\Scr B}

\def\U{\mathrm{U}}

\def\cD{\mathcal{D}}

\def\G_2{\mathrm{G_2}}

\def\cO{\mathcal{O}}

\def\G{\mathrm{G}}

\def\R{\mathbb{R}}

\def\cL{\mathcal{L}}

\def\dd{\mathrm{d}}
\newcommand{\abs}[1]{\left\lvert#1\right\rvert}

\newcommand{\escal}[1]{\langle#1\rangle}

\renewcommand{\ker}{\mathrm{Ker}}

\newcolumntype{P}[1]{>{\centering\arraybackslash}p{#1}}

\usepackage[math]{anttor}
\usepackage{orcidlink}

\allowdisplaybreaks
\usepackage{mlmodern}

\begin{document}

\title[Complex generalized spin-c Killing spinors in low dimensions]{Complex generalized spin-c Killing spinors in low dimensions}

\author[J.\ L.\ Carmona Jiménez]{José Luis Carmona Jiménez \orcidlink{0009-0002-3234-1718}}
\address{Riemann Center for Geometry and Physics, Leibniz Universität Hannover, Hannover, Germany.}
\email{jose.carmona@math.uni-hannover.de}

\author[A.\ Gil-García]{Alejandro Gil-García \orcidlink{0000-0002-9370-241X}}
\address{Scuola Internazionale Superiore di Studi Avanzati, Trieste, Italy}
\email{agilgarc@sissa.it}

\author[C.\ S.\ Shahbazi]{C.\ S.\ Shahbazi \orcidlink{0000-0003-1185-9569}}
\address{Departamento de Matem\'aticas, Universidad UNED - Madrid, Reino de Espa\~na}
\email{cshahbazi@mat.uned.es}

\begin{abstract}
We classify all complex generalized spin-c Killing spinors with complex Killing function $i\mu-\alpha$ on complete Riemannian surfaces, presenting an explicit list of all possible cases. We show that, contrary to previous statements in the literature, there exist complex generalized spin-c Killing spinors on Riemannian surfaces for which $\alpha\mu\not\equiv0$. We illustrate the classification by providing several explicit local, compact, and complete examples, including a complete example for which $\alpha\mu>0$ everywhere. We then move to three dimensions, proving that a spin-c Riemannian three-manifold admits a complex generalized spin-c Killing spinor with complex Killing function $i\mu-\alpha$ if and only if it is a $\mu$-exact trans-Sasakian manifold of type $(\alpha,\mu)$. In addition, we construct a geodesically incomplete Riemannian three-manifold equipped with a complex generalized spin-c Killing spinor for which neither $\mu$ nor $\alpha$ vanishes identically, and then prove that this is not possible if the characteristic vector field of the associated trans-Sasakian structure is complete. These results settle a conjecture by Gro\ss e and Nakad (Results in Mathematics, 2015) stating that on two- and three-dimensional Riemannian manifolds the complex Killing function of any complex generalized spin-c Killing spinor must be either globally real or globally imaginary.\bigskip

\noindent
\emph{Keywords: spinorial forms, complex generalized spin-c Killing spinors, spin-c structures, Riemannian surfaces, trans-Sasakian manifolds}\medskip

\noindent
\emph{MSC2020: 53C25, 53C27}

\end{abstract}

\maketitle

\setcounter{tocdepth}{1} 

\tableofcontents


\section{Introduction}


This article is devoted to the study of complex generalized spin-c Killing spinors on two- and three-dimensional Riemannian manifolds. Complex generalized spin-c Killing spinors on Riemannian manifolds were introduced by Herzlich and Moroianu~\cite{Herzlich_Moroianu_1999} for real Killing functions, and further studied for general complex Killing functions in~\cite{Grosse_Nakad_2015}, with additional results for imaginary Killing functions in~\cite{Lockman_2025,Carmona_2026}. Gro\ss e and Nakad proved, in particular, that in dimensions greater than three, a complex generalized spin-c Killing spinor necessarily reduces to either a real generalized spin-c Killing spinor or an imaginary generalized spin-c Killing spinor. In dimensions two and three, they conjectured in~\cite[Rmk.~3.5]{Grosse_Nakad_2015} that the same conclusion should hold. The conjecture is supported by~\cite[Lemma~3.4]{Grosse_Nakad_2015} which states, in dimensions greater than one, that the complex Killing function $i\mu-\alpha$ must be either real or imaginary at each point, that is, $\alpha\mu\equiv0$. As we will explain below, this claim is not correct in two dimensions.\medskip

Our approach relies on the theory of complex spinorial forms developed in~\cite{Algebraic_Complex_Square_2025, Complex_Differential_Spinors_2026}. We translate the complex generalized spin-c Killing equation into an equivalent differential equation for the complex-bilinear square of the spinor. This allows us to study the problem intrinsically on the underlying manifold, without imposing global assumptions such as simple connectivity or completeness of the Riemannian metric. This approach in dimension three is analogous to the one adopted by the authors in~\cite{Sasakian_spin-c_2026} for $\alpha$-Sasakian manifolds and in~\cite{Kenmotsu_spin-c_2026} for $\mu$-Kenmotsu manifolds.\medskip

In dimension two, we first characterize complex generalized spin-c Killing spinors in terms of their complex-bilinear squares. More precisely, Proposition~\ref{prop:complex_Killing_2d} translates the spinorial equation into a system for a section $c\in \Gamma(\cL)$ of the characteristic line bundle $\cL$ of the spin-c structure and an $\cL$-valued complex one-form $\theta$.   In particular, at a point, $c$ is zero if and only if the spinor is chiral. From this data we extract a real triple $(N,s,v)$ satisfying a first-order differential system, which allows us to describe the geometry of the solution directly on the underlying surface. On the set where the spinor is not chiral, this leads to the local normal form of Theorem~\ref{thm:surfacelocalnormalform}. In suitable local coordinates $(x,y)$, the metric takes the form:
\begin{equation*}
g=(N(x)^2-s(y)^2)(\dd x\otimes\dd x+\dd y\otimes\dd y),
\end{equation*}

and the complex-bilinear square and the unitary connection are explicitly determined by the functions $N(x)$ and $s(y)$.\medskip

We then use the real triple $(N,s,v)$ to obtain a global classification under the assumption that the Riemannian metric is complete.  Every complete solution belongs to one of two types: if the spinor is nowhere chiral, Theorem~\ref{thm:3.13} shows the surface is globally a quotient of a domain in $\mathbb{R}^2$ by a discrete group of translations and the solution is described by the two one-variable functions $N$ and $s$; if the spinor is somewhere chiral, the surface is a rotationally symmetric plane or sphere and the solution belongs to one of the polar families of Theorem~\ref{thm:main-zeros}, in which case either $\alpha\equiv0$ or $\mu\equiv0$. In particular, the only possible topological types of complete surfaces arising in this way are $\mathbb{R}^2$, $S^1\times\mathbb{R}$, $T^2$, and $S^2$.\medskip

The classification also clarifies the relation between completeness and the real and imaginary parts of the complex Killing function. Corollary~\ref{cor:surface_complete_separation} shows that, on a complete surface, the condition $\alpha\mu\equiv0$ forces either $\alpha\equiv0$ or $\mu\equiv0$, although completeness itself does not imply $\alpha\mu\equiv0$. In the compact case, Corollary~\ref{cor:surfacecompactclassification} shows that the only possibilities are rotationally symmetric spheres and tori, and that genuine mixing of $\alpha$ and $\mu$ can occur only in the toroidal case. Moreover, Corollary~\ref{cor:surfacetorusconsequences} shows that every toroidal solution has at least one point at which both coefficients vanish.\medskip

We complement these structural results with several explicit examples. Example~\ref{ex:spin-counterexample} gives a complex generalized spin Killing spinor on a Euclidean domain with $\alpha\mu\not\equiv0$, showing that the pointwise restriction asserted in~\cite[Lemma~3.4]{Grosse_Nakad_2015} already fails in the spin setting, without the presence of a non-trivial auxiliary connection; Example~\ref{ex:nowhere-vanishing} gives a spin-c example for which $\alpha\mu>0$ everywhere, whereas Example~\ref{ex:alpha.mu=0-2} has $\alpha\mu\equiv0$ while neither coefficient vanishes identically; finally, Example~\ref{example:complete_2d} provides a complete example on $\mathbb{R}^2$ with $\alpha\mu>0$ everywhere, and Example~\ref{ex:compact} gives a periodic version on the torus. Thus, even completeness or compactness forces neither the complex Killing function to be globally real or globally imaginary, nor $\alpha \mu = 0$.

\begin{table}[htbp]
\centering
\small
\renewcommand{\arraystretch}{1.2}
\setlength{\tabcolsep}{4pt}
\begin{tabular}{|p{0.22\linewidth}|p{0.40\linewidth}|p{0.31\linewidth}|}
\hline
\textbf{Result} & \textbf{Description} & \textbf{Significance} \\
\hline

Theorem~\ref{thm:surfacelocalnormalform}
&
Local classification around non-chiral points, with the Killing function and connection explicitly determined.
&
Reduces the local problem to two functions of one variable.
\\
\hline

Theorems~\ref{thm:3.13} and~\ref{thm:main-zeros}
&
Classification on complete Riemannian surfaces, which are translation quotients or rotational planes and spheres with chiral points.
&
Classifies all complete non-parallel solutions; chiral points strongly restrict the underlying geometry.
\\
\hline

Corollary~\ref{cor:surface_complete_separation}
&
On a complete surface,
$\alpha\mu\equiv0$ implies
$\alpha\equiv0$ or $\mu\equiv0$.
&
Excludes mixing with disjoint supports under completeness.
\\
\hline

Corollaries~\ref{cor:surfacecompactclassification} and~\ref{cor:surfacetorusconsequences}
&
Compact solutions occur on rotational spheres or separated tori.
Every toroidal solution has a point where $\alpha=\mu=0$.
&
Confines mixed compact solutions to tori and constrains
the zeros of their Killing function.
\\
\hline

Examples~\ref{ex:spin-counterexample}--\ref{ex:compact}
&
Constructs local, complete, and compact mixed solutions,
including a spin example and a complete example with
$\alpha\mu>0$.
&
Disproves the two-dimensional pointwise vanishing assertion
and the real-or-imaginary conjecture, even under compactness.
\\
\hline
\end{tabular}

\vspace{5pt}

\caption{Main results in two dimensions. Throughout, the Killing function $i\mu-\alpha$ is not identically zero,
and existence allows a suitable spin-c structure and connection.}
\label{tab:surface-results}
\end{table}

We then proceed to consider the three-dimensional case. In dimension three, the picture is different. We first show that the complex generalized spin-c Killing equation admits a precise geometric interpretation in terms of trans-Sasakian geometry. More precisely, our first main result in this dimension establishes a correspondence between complex generalized spin-c Killing spinors and a distinguished class of trans-Sasakian structures.

\begin{thm}
\label{thm:ifandonlyif}
A three-dimensional Riemannian manifold $(M,g)$ admits a complex generalized spin-c Killing spinor with complex Killing function $i\mu-\alpha$ if and only if it is a trans-Sasakian manifold of type $(\alpha,\mu)$ such that the one-form $\mu\zeta$ is exact.
\end{thm}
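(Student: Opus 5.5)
We work with the complex-bilinear square of the spinor, as in the authors' treatment of $\alpha$-Sasakian and $\mu$-Kenmotsu manifolds. Fix a spin-c structure with characteristic line bundle $\cL$ and unitary connection $A$. A complex generalized spin-c Killing spinor satisfies
\[
\nabla^A_X\psi=(i\mu-\alpha)\,X\cdot\psi .
\]
In dimension three the spinor module is $\C^2$. Its bilinear square, twisted by $\cL$, takes values in $\cL\otimes\Lambda^1_\C$ (self-dual forms are identified with one-forms). It is determined by $\psi\otimes\bar\psi$, which amounts to a real one-form $\zeta$ together with its orthogonal complex complement $\omega=e_1+ie_2\in\Gamma(\cL\otimes\Lambda^1_\C)$.

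\textbf{Step 1: the differential system.} Using the earlier results on complex differential spinors, we translate the Killing equation into first-order equations for $\zeta$ and $\omega$. For $\psi$ of unit norm, the norm equation $X|\psi|^2=-4\mu\,g(X,\zeta)|\psi|^2$ (up to constants), together with the normalization, should yield
\begin{align*}
\nabla\zeta &= \alpha\,(g-\zeta\otimes\zeta)+\mu\,{*\zeta},\\
\nabla^A\omega &= \bigl(\text{terms in }\zeta\otimes\omega\text{ with coefficients }\alpha,\mu\bigr).
\end{align*}
Here $|\psi|^2=e^{-2f}$ with $\dd f=\mu\zeta$; this is where the exactness of $\mu\zeta$ comes from.

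The first equation says that $(g,\zeta,\phi={*\zeta})$ is trans-Sasakian of type $(\alpha,\mu)$. This is the standard three-dimensional characterization $\nabla_X\xi=-\alpha\phi X+\beta(X-\eta(X)\xi)$, with the roles of $\alpha$ and $\mu$ matched to the paper's conventions. Since $\mu\zeta$ is the differential of $-\tfrac12\log|\psi|^2$, necessity of exactness follows at once.

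\textbf{Step 2: the converse.} Suppose $(g,\zeta)$ is trans-Sasakian of type $(\alpha,\mu)$ with $\mu\zeta=\dd f$. The rank-two bundle $\zeta^\perp$ has a complex structure $\phi$, so it is a complex line bundle $E$. We take $\cL=E$ (or its dual, depending on convention), which yields a canonical spin-c structure with spinor bundle $\C\oplus E$, as for almost contact metric three-manifolds. We define $A$ as the connection on $\cL$ induced by projecting the Levi-Civita connection to $\zeta^\perp$, corrected by a multiple of $\alpha\zeta$ or $\mu\zeta$, chosen so that the $\omega$-equation holds.

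The trans-Sasakian equation fixes $\nabla\zeta$. The equation for $\omega=e_1+ie_2$, a local unitary frame of $E$ viewed as a section of $\cL\otimes\Lambda^1_\C$, then reduces to a condition on the connection form, which is solvable by that choice of $A$. The scalar factor $e^{-f}$ handles the norm equation. We then invoke the reconstruction part of the spinorial-form correspondence: a pair $(\zeta,\omega)$ satisfying the algebraic constraints and the differential system is the square of a spinor solving the Killing equation, up to a sign fixed by the constraint.

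\textbf{Main obstacle.} The delicate part is this reconstruction step, specifically two checks. First, the integrability conditions, namely $\dd(\alpha\zeta)$ and $\dd(\mu\zeta)$ relations coming from the trans-Sasakian identities $\dd\alpha\wedge\zeta$ and the like, must be compatible with the curvature of the chosen $A$. Second, the sign ambiguity of the square root of $\psi\otimes\psi$ must be resolvable globally without simple connectivity. I expect the sign to be resolved by constructing the spinor directly as $e^{-f}$ times the canonical section of the $\C$-summand, which is global. Given the paper's approach, it remains to verify that this section satisfies the equation by a direct Clifford computation in an adapted frame $\{\zeta,e_1,e_2\}$, using $\nabla\zeta$ from the trans-Sasakian condition.
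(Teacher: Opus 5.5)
Your overall strategy matches the paper's: square the spinor, compute $\nabla^g\zeta$, reduce the trans-Sasakian condition to $\nabla^g\xi$ in dimension three, and read off exactness from the norm; conversely take $\cL\cong\cD^{(1,0)}$, $\rho=e^{f}(e^1-ie^2)\otimes(e_1+ie_2)$ and a corrected projected connection. However, the one formula you commit to in Step~1 is wrong, and the hedge about conventions does not rescue it. In any convention, the real function governing $\dd\log|\psi|^2$ also governs the symmetric part of $\nabla^g\zeta$. With Hermitian Clifford multiplication and $\nabla_X\psi=\lambda X\cdot\psi$, both come from $\mathrm{Re}\,\lambda$. Your norm equation and your exactness claim involve $\mu$, so the symmetric part must be $\mu(g-\zeta\otimes\zeta)$. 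The correct output is
\begin{equation*}
\nabla^g_X\zeta=-\alpha\,\iota_X\omega+\mu\bigl(X^\flat-\zeta(X)\zeta\bigr),\qquad \dd\log\langle\rho,\overline{\rho}\rangle=2\mu\zeta,\qquad \omega=-*\zeta .
\end{equation*}
With your formula as written, the structure has type $(\mu,\alpha)$, so $\dd\zeta=-2\mu\omega$. Exactness of $\mu\zeta$ then gives $0=\zeta\wedge\dd(\mu\zeta)=-2\mu^2\,\zeta\wedge\omega$, i.e.\ $\mu\equiv0$. Imaginary Killing spinors on hyperbolic three-space already contradict this. The derivation you replaced by ``should yield'' is the actual content of the only-if direction. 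The paper obtains it by differentiating $\zeta=i*(\rho\wedge\overline{\rho})/\langle\rho,\overline{\rho}\rangle$ with $\nabla^{g,A}_X\rho=(i\mu-\alpha)*(X^\flat\wedge\rho)$. The key identity is $\iota_Y*\bigl(*(X^\flat\wedge\rho)\wedge\overline{\rho}\bigr)=\overline{\rho}(X)\rho(Y)-\langle\rho,\overline{\rho}\rangle g(X,Y)$; see Lemma~\ref{lemma:nabla_zeta}. Two smaller slips: your Killing equation drops the factor $\tfrac12$ of Definition~\ref{def:complex-Kill-eq}, which doubles the type. Also, $\psi$ cannot be taken of unit norm unless $\mu\equiv0$.

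In Step~2, the connection is not a free choice between an $\alpha\zeta$ and a $\mu\zeta$ correction. Neither a unitary connection nor the Levi-Civita derivative of the constant-norm frame $e^1-ie^2$ changes $|\rho|$. So both contribute only imaginary multiples of $\rho$. The real term $\mu\zeta(X)\rho$ must therefore come from $\dd e^{f}=e^{f}\mu\zeta$, which is exactly where exactness, rather than closedness, is used. The term $i\alpha\zeta(X)\rho$ then forces $A=\alpha\zeta$ on top of the projected connection. The identity to verify is $\nabla^{g,A}_X\rho=(\mu+i\alpha)\bigl(\zeta(X)\rho-\rho(X)\zeta\bigr)$. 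Besides the $\zeta\otimes\rho$ terms, it has a $\rho(X)\zeta$ term coming from the $\zeta$-components of $\nabla^g e^1$ and $\nabla^g e^2$, i.e.\ from $\nabla^g\xi$. Once this holds, Proposition~\ref{prop:complex_Killing} finishes the proof. It applies together with the fact that every nowhere-vanishing isotropic $\cL$-valued one-form is the square of a spinor in some spinor bundle. There is no curvature or integrability compatibility to check, and that algebraic characterization absorbs the sign ambiguity. So your ``main obstacle'' is not where the difficulty lies. Checking the Killing equation directly for a rescaled canonical spinor of $\C\oplus E$ is a legitimate alternative that bypasses the reconstruction result. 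But it is still a computation you must carry out, with the same forced choice of $A$.
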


Trans-Sasakian manifolds form a class of almost contact metric manifolds introduced by Oubiña in~\cite{Oubina1985}, corresponding to the class $\mathcal{C}_5\oplus\mathcal{C}_6$ in the Chinea--González classification~\cite{ChineaGonzalez1990}. Marrero proved in~\cite{Marrero1992} that, in dimensions greater than three, a trans-Sasakian manifold of type $(\alpha,\mu)$ is necessarily of type $(0,0)$, $(\alpha,0)$, or $(0,\mu)$, corresponding respectively to the cosymplectic (or co-Kähler), $\alpha$-Sasakian, and $\mu$-Kenmotsu cases. In dimension three this rigidity no longer holds for arbitrary trans-Sasakian structures. Indeed, in~\cite{Marrero1992} the author constructs examples which are neither $\alpha$-Sasakian nor $\mu$-Kenmotsu. However, these examples do not satisfy the exactness condition for $\mu\zeta$ appearing in Theorem~\ref{thm:ifandonlyif}. On the other hand, by Theorem~\ref{thm:ifandonlyif}, the counterexample constructed in Section~\ref{sec:counterexample} yields an incomplete $\mu$-exact trans-Sasakian three-manifold for which neither $\alpha$ nor $\mu$ vanishes identically. The situation changes once a natural completeness assumption is imposed, leading to our second main result in this dimension.

\begin{thm}
\label{thm:completeReebnogo}
Let $(M,g,\varphi,\xi,\zeta)$ be a connected three-dimensional trans-Sasakian manifold of type $(\alpha,\mu)$. Assume that the one-form $\mu\zeta$ is closed and that the characteristic vector field $\xi$ is complete. Then either $\alpha\equiv0$ or $\mu\equiv0$ everywhere on $M$.
\end{thm}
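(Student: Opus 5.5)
The plan is to first turn the closedness hypothesis into the pointwise statement $\alpha\mu\equiv0$. Then I would use the flow of $\xi$, together with an ODE that $\mu$ satisfies along its integral curves, to separate $M$ into two invariant pieces. Connectedness and completeness should then force one of the two functions to vanish identically.

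\emph{Step 1 (algebraic consequences).} I will use the defining equation $\nabla_X\xi=-\alpha\varphi X+\mu(X-\zeta(X)\xi)$ and the standard three-dimensional identities $\dd\zeta=2\alpha\Phi$ and $\dd\Phi=2\mu\,\zeta\wedge\Phi$, with $\Phi$ the fundamental two-form; I would check the normalisations against the paper's conventions. Applying $\dd$ to $\dd\zeta$ gives $\dd\alpha\wedge\Phi+2\alpha\mu\,\zeta\wedge\Phi=0$, hence $\xi(\alpha)+2\alpha\mu=0$. Closedness of $\mu\zeta$ reads $\dd\mu\wedge\zeta+2\alpha\mu\Phi=0$. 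Evaluating this on a pair $(X,\varphi X)$ with $X\perp\xi$ kills the first term and leaves $\alpha\mu\,|X|^2=0$. Hence $\alpha\mu\equiv0$, $\dd\mu=\xi(\mu)\zeta$, and $\xi(\alpha)\equiv0$.

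\emph{Step 2 (flow invariance).} Put $V=\{\alpha\neq0\}$ and $U=\{\mu\neq0\}$; these are disjoint open sets. Since $\xi(\alpha)=0$, $\alpha$ is constant along every integral curve of $\xi$, so the flow of $\xi$ (globally defined by completeness) preserves $V$ and the closed set $\{\alpha=0\}$. For $\mu$, on the open set $\{\alpha=0\}^\circ$ I would compute $\mathrm{Ric}(\xi,\xi)$ via $\nabla_\xi\nabla\xi$ in two ways. This should give a Riccati-type equation $\xi(\mu)=-\mu^2+F$, with $F$ built from curvature and $\alpha$, so that $\mu\equiv0$ is a solution. ODE uniqueness would then show that orbits starting in $U$ stay in $U$ for all time.

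\emph{Step 3 (main obstacle: exploiting completeness).} The incomplete counterexample of Section~\ref{sec:counterexample} shows that $\alpha\mu\equiv0$ alone cannot suffice, so completeness must enter precisely here. On $U$ we have $\alpha=0$, hence $\dd\zeta=0$, so $\ker\zeta$ is integrable and $\mu$ is constant on its leaves. Along the complete orbits of $\xi$ in $U$ the function $\mu$ solves the Riccati equation for all $t\in\R$. I would show that a solution defined for all time and nonvanishing cannot approach $0$ along a sequence of points tending to $\partial U$, because that would force blow-up in finite backward or forward time. Concretely, I would look at a boundary point $q\in\partial U$ with $\mu(q)=0$ and transport along the flow and along the leaves of $\ker\zeta$. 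The aim is to conclude that if $U\neq\emptyset$ then $\overline U\cap V=\emptyset$ and $U$ is closed, hence $U=M$ and $\alpha\equiv0$. Otherwise $U=\emptyset$ and $\mu\equiv0$. The delicate part I expect is identifying the exact form of $F$ and ruling out that $U$ and $V$ share a boundary. If the Riccati route is insufficient, I would instead try to prove a gradient identity of the form $\varphi(\mathrm{grad}\,\alpha)=\mathrm{grad}\,\mu-\xi(\mu)\xi$ plus error terms. With $\dd\mu=\xi(\mu)\zeta$ this would make $\alpha$ locally constant off a controlled set, and completeness of $\xi$ would then be used to remove the error terms.
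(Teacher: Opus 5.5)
Your Step 1 is correct and matches the paper's opening: closedness of $\mu\zeta$ gives $\alpha\mu\equiv0$ and $\mathrm{d}\mu=\xi(\mu)\zeta$, and $\mathrm{d}^2\zeta=0$ then gives $\xi(\alpha)=0$.

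\textbf{The gap is in Steps 2 and 3.} The Riccati relation you have in mind is the general three-dimensional trans-Sasakian identity $\xi(\mu)=\alpha^2-\mu^2-\tfrac12\mathrm{Ric}(\xi,\xi)$. Here $F=\alpha^2-\tfrac12\mathrm{Ric}(\xi,\xi)$ is just another function on $M$, not a function of $\mu$. So $\mu\equiv0$ solves the equation along an orbit only where $F$ happens to vanish, and no uniqueness argument applies.

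In fact your intermediate claims are false even under the hypotheses of the theorem. Take $\R\times\R^2$ with $g=\mathrm{d}t^2+e^{2h(t)}(\mathrm{d}x^2+\mathrm{d}y^2)$, $\zeta=\mathrm{d}t$, $\varphi$ the rotation of the $\R^2$-factor, and $h'$ a compactly supported bump. This is trans-Sasakian of type $(0,h')$, with $\mu\zeta=\mathrm{d}h$ exact and $\xi=\partial_t$ complete. Yet $U=\{h'\neq0\}\times\R^2$ is neither $\xi$-invariant nor closed, and $\mu\to0$ along complete orbits with no blow-up. So ``$U\neq\emptyset\Rightarrow U=M$'' cannot be proved; what must be shown is that $\mu\equiv0$ as soon as $\alpha\not\equiv0$.

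Ruling out a common boundary of $U$ and $V$ would not suffice either. In the counterexample of Section~\ref{sec:counterexample} the closures of $U$ and $V$ in $M$ are already disjoint, separated by a region where $\alpha=\mu=0$.

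Your fallback fails for a similar reason. A direct curvature computation gives $\mathrm{Ric}(\xi)=\varphi(\mathrm{grad}\,\alpha)-\mathrm{grad}\,\mu+(2\alpha^2-2\mu^2-\xi(\mu))\xi$. Hence $\varphi(\mathrm{grad}\,\alpha)-\mathrm{grad}\,\mu+\xi(\mu)\xi$ is exactly the $\mathcal{D}$-component of $\mathrm{Ric}(\xi)$, which nothing controls. Also, $\alpha$ is genuinely non-constant where $\mu=0$ (e.g.\ $\alpha=-\tfrac12F'(x)$ in that counterexample).

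\textbf{The missing idea} is to use $\mathrm{d}\mu|_{\mathcal{D}}=0$ on the side where $\alpha\neq0$, not where $\mu\neq0$.
\begin{enumerate}
\item Where $\alpha\neq0$, $\zeta\wedge\mathrm{d}\zeta=-2\alpha\,\zeta\wedge\omega\neq0$, so $\mathcal{D}=\ker\zeta$ is contact and hence bracket-generating.
\item Fix $p$ with $\alpha(p)\neq0$, so $\mu(p)=0$. By Chow--Rashevskii, the horizontal accessibility orbit $\mathcal{O}_{\mathcal{D}}(p)$ contains the component of $\{\alpha\neq0\}$ through $p$, so it is open. Since $\mu$ is constant along curves tangent to $\mathcal{D}$, $\mu\equiv0$ on this orbit.
\item Completeness enters here. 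Since $\mathscr{L}_\xi\zeta=\iota_\xi\mathrm{d}\zeta=0$, the global flow $\Phi_t$ preserves $\mathcal{D}$ and maps horizontal orbits onto horizontal orbits. Since $\xi(\alpha)=0$, $\Phi_t(p)$ stays in the same component of $\{\alpha\neq0\}$. Together these give $\Phi_t(\mathcal{O}_{\mathcal{D}}(p))=\mathcal{O}_{\mathcal{D}}(p)$ for all $t$.
\item The complement is then invariant under horizontal flows and under $\Phi_t$. It is open, because $(s_1,s_2,t)\mapsto\Phi_t\circ\Phi^{X_2}_{s_2}\circ\Phi^{X_1}_{s_1}(q)$ is a local diffeomorphism at the origin. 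Connectedness gives $\mathcal{O}_{\mathcal{D}}(p)=M$, hence $\mu\equiv0$.
\end{enumerate}
In the incomplete counterexample the horizontal orbit of such a $p$ is $\{y<0\}$, which is not $\Phi_t$-invariant. The argument breaks precisely because the $\xi$-orbit through $p$ leaves $M$ in finite time.
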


Since the characteristic vector field $\xi$ has unit norm and satisfies $\nabla^g_{\xi}\xi=0$, completeness of the Riemannian metric implies completeness of $\xi$. We therefore obtain the following consequence.

\begin{cor}
Let $(M,g,\varphi,\xi,\zeta)$ be a connected three-dimensional trans-Sasakian manifold of type $(\alpha,\mu)$. Assume that the one-form $\mu\zeta$ is closed and that $g$ is complete. Then either $\alpha\equiv0$ or $\mu\equiv0$ everywhere on $M$.
\end{cor}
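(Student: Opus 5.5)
The corollary should follow from Theorem~\ref{thm:completeReebnogo} once we check that completeness of $g$ implies completeness of $\xi$. So the plan is simply to verify that implication and then invoke the theorem with the same hypotheses: connectedness, trans-Sasakian of type $(\alpha,\mu)$, and $\mu\zeta$ closed.

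\emph{Step 1: the integral curves of $\xi$ are geodesics.} For a trans-Sasakian structure of type $(\alpha,\mu)$ the standard formula is
\begin{equation*}
\nabla^g_X\xi=-\alpha\varphi X+\mu\bigl(X-\zeta(X)\xi\bigr).
\end{equation*}
Setting $X=\xi$ and using $\varphi\xi=0$ and $\zeta(\xi)=1$ gives $\nabla^g_\xi\xi=0$. Since $g(\xi,\xi)=1$, every integral curve of $\xi$ is a unit-speed geodesic.

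\emph{Step 2: completeness of $g$ gives completeness of $\xi$.} Fix $p\in M$ and let $\gamma\colon(a,b)\to M$ be the maximal integral curve of $\xi$ through $p$. By Hopf--Rinow, the geodesic $\sigma$ with $\sigma(0)=p$ and $\dot\sigma(0)=\xi_p$ is defined on all of $\mathbb{R}$. On its maximal domain $\gamma$ is a geodesic with the same initial data, so $\gamma=\sigma$ there by uniqueness of geodesics.

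Suppose $b<\infty$. As $t\to b$, the point $\gamma(t)=\sigma(t)$ converges to $\sigma(b)$. The standard escape lemma for flows says a maximal integral curve with finite endpoint must leave every compact set, which is contradicted here. Equivalently, one can restart the flow of $\xi$ at $\sigma(b)$ and glue. The case $a>-\infty$ is symmetric. Hence $\xi$ is complete.

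\emph{Step 3: conclude.} All hypotheses of Theorem~\ref{thm:completeReebnogo} now hold, so either $\alpha\equiv0$ or $\mu\equiv0$ on $M$.

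There is no genuine obstacle. The only points needing care are the sign conventions in the formula for $\nabla^g\xi$, where only $\varphi\xi=0$ and $\zeta(\xi)=1$ are actually used, and the standard flow argument extending $\gamma$ past a finite endpoint.
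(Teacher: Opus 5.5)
Your proposal is correct and follows the paper's proof: you set $X=\xi$ in the trans-Sasakian formula for $\nabla^g_X\xi$ to get $\nabla^g_\xi\xi=0$, so the integral curves of the unit field $\xi$ are unit-speed geodesics, hence complete when $g$ is, and you then apply Theorem~\ref{thm:completeReebnogo}. Your Step~2 simply spells out the uniqueness-of-geodesics and escape-lemma argument that the paper leaves implicit.
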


Combining this corollary with Theorem~\ref{thm:ifandonlyif}, we conclude that a complete three-dimensional spin-c Riemannian manifold carrying a complex generalized spin-c Killing spinor has complex Killing function which is either globally real or globally imaginary. In particular, there are no complete, and hence no compact, three-dimensional counterexamples to the conjecture of Gro\ss e and Nakad.\medskip

The paper is organized as follows. In Section~\ref{sec:pre}, we introduce complex generalized spin-c Killing spinors and fix the conventions used throughout the paper. In Section~\ref{sec:2d}, we study the two-dimensional case. We derive the differential system satisfied by the complex-bilinear square, obtain local normal forms, classify the complete solutions, and construct the explicit examples described above. In Section~\ref{sec:equivalence_trans-Sasakian}, we prove Theorem~\ref{thm:ifandonlyif}, establishing the correspondence between complex generalized spin-c Killing spinors and $\mu$-exact trans-Sasakian structures. In Section~\ref{sec:counterexample}, we construct an incomplete three-dimensional counterexample to the Gro\ss e--Nakad conjecture, while in Section~\ref{sec:completeness} we prove Theorem~\ref{thm:completeReebnogo} and derive the corresponding completeness result. Finally, Appendix~\ref{Ap:Lemma3.4} analyzes the proof of~\cite[Lemma~3.4]{Grosse_Nakad_2015}, verifies the two-dimensional counterexample directly using the classical spinor formalism, and proves the corrected pointwise statement in dimensions greater than two.

\begin{ack}
The work of JLCJ has been supported by a Riemann fellowship at the Riemann Center for Geometry and Physics, Leibniz Universität Hannover (Germany) and by the project PID2024-156578NB-I00 (Spain). The work of AGG is supported by the Scuola Internazionale Superiore di Studi Avanzati (SISSA). The work of CSS was partially supported by the research grant PID2023-152822NB-I00 of the Ministry of Science of the government of Spain.
\end{ack}


\section{Preliminaries}
\label{sec:pre}

 
Let $(M,g)$ be a connected oriented Riemannian manifold of dimension $d\geq2$ with volume form $\mathrm{vol}_g$. Let $S$ be a bundle of irreducible complex spinors over the bundle of Clifford algebras of $(M,g)$. Throughout Sections~2--4, we use the positive Clifford product convention, denoted by \emph{dot}~$\cdot$:
\begin{equation*}
X\cdot Y + Y \cdot X = 2g(X,Y).
\end{equation*}

The existence of such a bundle $S$ is equivalent to the existence of a $\mathrm{Spin}^c(d)$-structure $Q$ on $(M,g)$ to which $S$ is associated via the standard associated vector bundle construction:
\begin{equation*}
S=Q\times_{\mathfrak{r}}\C^{2^{\lfloor d/2\rfloor}},
\end{equation*}

where $\mathfrak{r}$ denotes the restriction to the spin-c group $\mathrm{Spin}^c(d)\subset\C\mathrm{l}(d)$ of an irreducible representation $\gamma$ of $\C\mathrm{l}(d)$. In odd dimension, there are two inequivalent irreducible $\C\mathrm{l}(d)$-representations. For convenience, we choose $\gamma$ to satisfy $\gamma(i^{(d-1)/2} \mathrm{vol}_g) = -\Id$.\medskip

In particular, $S$ admits a non-degenerate complex-bilinear pairing~\cite{AC97,ACDVP05,Algebraic_Complex_Square_2025}:
\begin{equation*}
\scB\colon S\otimes S\to\mathcal{L}
\end{equation*}

such that, for some $\delta_1$, $\delta_2 \in \{ \pm1\}$:
\begin{equation*}
\scB(\eta_1,\eta_2)=\delta_1\scB(\eta_2,\eta_1),\qquad\scB(\theta\cdot\eta_1,\eta_2)=\delta_2\scB(\eta_1,\theta\cdot\eta_2)
\end{equation*}

for all $\theta\in\Omega^1_\C(M)$ and $\eta_1$, $\eta_2\in\Gamma(S)$. In dimension two, by~\cite[Table~3, p.~19]{Algebraic_Complex_Square_2025}, we choose $\scB$ to be symmetric of positive adjoint type, that is, $\delta_1=\delta_2=1$. In dimension three, by~\cite[Table~7, p.~29]{Algebraic_Complex_Square_2025}, we have that $\scB$ is skew-symmetric of negative adjoint type, that is, $\delta_1=\delta_2=-1$.\medskip

Let $\mathcal{L}\to M$ be the characteristic Hermitian complex line bundle on $M$ associated with the given $\mathrm{Spin}^c(d)$-structure $Q$. We will denote the corresponding principal $\mathrm{U}(1)$-bundle by $P\to M$. Since $S$ is associated with a spin-c structure $Q$, the combination of the lift of the Levi-Civita connection $\nabla^g$ of $(M,g)$ to $S$ together with the choice of a connection $A\in\Omega^1(P,i\R)$ on the characteristic $\mathrm{U}(1)$-bundle $P$ defines a covariant derivative $\nabla^{g,A}$ on $S$.

\begin{remark}
It is well-known that every orientable manifold $M$ of dimension two or three admits a spin structure. However, a spin-c structure on $M$ reduces to a spin structure if and only if the characteristic complex line bundle $\mathcal{L}$ is trivial. Even when $\mathcal{L}$ is trivial, the connection one-form $A$ on $\mathcal{L}$ may have non-trivial holonomy. In that case, the connection $\nabla^{g,A}$ does not reduce to the lift of the Levi-Civita connection $\nabla^g$.
\end{remark}

\begin{definition}\label{def:complex-Kill-eq}
A \emph{complex generalized spin-c Killing spinor} on $(M,g)$ with \emph{complex Killing function} $i\mu-\alpha$, where $\alpha$, $\mu\in C^\infty(M,\R)$, is a  non-trivial section $\eta\in\Gamma(S)$ of a bundle of irreducible complex spinors $S$ on $(M,g)$ satisfying the following equation:
\begin{equation*}
\nabla^{g,A}_X\eta=\frac{i\alpha+\mu}{2}X\cdot\eta,\qquad X\in\Gamma(TM)
\end{equation*}

for a connection $A\in\Omega^1(P,i\R)$ on the characteristic $\U(1)$-bundle $P$ of the spin-c structure $Q$.
\end{definition}

Since $(M,g)$ is connected, by~\cite[Lemma~3.1(iv)]{Grosse_Nakad_2015}, we have that $\eta$ is nowhere vanishing.

\begin{remark}\label{rmk:convenciones}
A negative Clifford product convention, denoted in this remark by \emph{bullet} $\bullet$, is defined by setting $X\bullet = -iX\cdot$. Consequently, under this identification, the spinor bundle and the spinorial covariant derivative are unchanged and the Killing equation in Definition~\ref{def:complex-Kill-eq} is given by:
\begin{equation*}
\nabla^{g,A}_X\eta=\frac{i\alpha+\mu}{2}X\cdot\eta=\frac{i\mu-\alpha}{2}X\bullet\eta,\qquad X\in\Gamma(TM).
\end{equation*}

In Appendix~\ref{Ap:Lemma3.4}, we will follow the conventions and notation of~\cite{Grosse_Nakad_2015}. In particular, we will use the negative Clifford multiplication, which will be denoted by $\cdot$ instead of $\bullet$. Furthermore,  the complex Killing function is denoted by $K$, and it is related to our definitions by $K = \frac{i\mu-\alpha}{2}$.
\end{remark}


\section{Generalized Killing spinors on Riemannian surfaces}
\label{sec:2d}


In this section, we consider complex generalized spin-c Killing spinors on Riemannian surfaces and obtain an exhaustive classification when the underlying Riemannian surface is complete. Throughout this section, $(M,g)$ is a connected Riemannian surface with no boundary, and we will study the geometry of $(M,g)$ when the Killing function $i\mu-\alpha \in C^{\infty}(M,\C)$ is not identically zero. The case $i\mu-\alpha\equiv0$ reduces to that of a parallel spin-c spinor, which was solved in \cite{Moroianu1997}, from which it follows that every Riemannian surface admits a spin-c parallel spinor. 


\subsection{The square of a generalized spin-c Killing spinor}


We begin by computing the complex-bilinear square of a generalized spin-c Killing spinor on a Riemannian surface, our main working tool throughout this section. The Hermitian metric $(-,-)_{\cL}$ is linear in its first entry and conjugate-linear in its second. Let $\cL\to M$ be a Hermitian complex line bundle on $(M,g)$. For $\cL$-valued complex one-forms, the induced Hermitian metric and the complex-bilinear pairing are, respectively, defined as follows:
\begin{equation*}
(\varphi,\psi)_{\cL,g}:=\sum_{j=1}^2\bigl(\varphi(e_j),\psi(e_j)\bigr)_{\cL} \in C^{\infty}(M,\C), \qquad \langle\varphi,\psi\rangle:=\sum_{j=1}^2\varphi(e_j)\otimes\psi(e_j)\in\Gamma(\cL \otimes \cL),    
\end{equation*}

where $(e_1,e_2)$ is any real $g$-orthonormal frame. The first contraction is scalar-valued and defines $|\varphi|^2=(\varphi,\varphi)_{\cL,g}$; the second contains no complex conjugation. Given a unitary connection $A$ on $\cL$, the connection $\nabla^{g,A}$ on $\wedge T^*_\C M\otimes\cL$ is defined as follows on homogeneous elements $\xi\otimes \mathfrak{l} \in \Omega_\C(M,\cL)$:
\begin{equation*}
\nabla^{g,A}_X(\xi\otimes\mathfrak{l}) = (\nabla^g_X\xi)\otimes\mathfrak{l} + \xi\otimes(\nabla^A_X\mathfrak{l}).
\end{equation*}

This connection preserves the induced Hermitian metric and is compatible with the complex-bilinear contraction and the induced connection on $\cL\otimes\cL$. Complex conjugation sends $\cL$-valued forms to $\overline{\cL}$-valued forms. When conjugate line-bundle factors occur, we contract them through the parallel map:
\begin{equation*}
\cL\otimes\overline{\cL}\longrightarrow M\times\mathbb C, \qquad \mathfrak l_1\otimes\overline{\mathfrak{l}}_2\mapsto(\mathfrak l_1,\mathfrak l_2)_{\cL},
\end{equation*}

using the interchange of the line factors if they occur in the opposite order. In particular, $\varphi\wedge\overline{\varphi}$ defines a complex two-form after the aforementioned contraction. The ordinary complex one-form $(\varphi,\mathfrak l)_{\cL} \in\Omega_\C^1(M)$ is defined by:
\begin{equation*}
(\varphi,\mathfrak l)_{\cL}(X):=(\varphi(X),\mathfrak l)_{\cL},\qquad X\in\Gamma(T_\C M).
\end{equation*}

This complex one-form on $M$ will be crucial for the upcoming classification.

\begin{prop}
\label{prop:algebraicsquare}
Let $(M,g)$ be an oriented Riemannian surface and let $\mathcal L$ be a Hermitian complex line bundle on $M$. An $\mathcal L$-valued complex exterior form $\rho \in \Omega_\C(M,\cL)$ is the complex-bilinear square of a nowhere vanishing irreducible complex spinor for some bundle of irreducible complex spinors on $(M,g)$ if and only if:
\begin{equation}
\label{eq:surfacealgebraic}
\rho = c+\theta,\qquad \langle\theta,\theta\rangle=c\otimes c,
\end{equation}
where $c\in\Gamma(\mathcal L)$ and $\theta\in\Omega^1_{\mathbb C}(M,\mathcal L)$ is nowhere vanishing.
\end{prop}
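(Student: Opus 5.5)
The proof is pointwise algebra, globalized at the end. Over a point $p\in M$, the spinor module is $\Sigma=\C^2=\Sigma^+\oplus\Sigma^-$, and $\scB$ is symmetric of positive adjoint type. Via the Fierz-type isomorphism $\Sigma\otimes\Sigma\cong\wedge T^*_{p,\C}M$ (twisted by $\cL_p$), the complex-bilinear square of $\eta=\eta^++\eta^-$ is the form $\rho$ characterized by
\[
\scB(\rho\cdot\chi_1,\chi_2)=\scB(\chi_1,\eta)\,\scB(\eta,\chi_2)
\]
for all $\chi_1,\chi_2$, up to the normalization fixed in \cite{Algebraic_Complex_Square_2025}. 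Since $\scB$ is symmetric and $\eta\otimes\eta$ is symmetric, only the degrees whose Clifford action is $\scB$-symmetric survive. With $\delta_1=\delta_2=1$, these are degrees $0$ and $1$; the volume form is $\scB$-antisymmetric because $e_1e_2$ picks up a sign under transposition. Hence $\rho=c+\theta$ with $c\in\cL_p$ and $\theta\in T^*_{p,\C}M\otimes\cL_p$.

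\textbf{Necessity.} Choose an orthonormal frame $(e_1,e_2)$ and a trivialization of $\cL_p$. In dimension two $\scB$ pairs $\Sigma^+$ with $\Sigma^+$ and $\Sigma^-$ with $\Sigma^-$, since it is symmetric and compatible with chirality. This gives $c\propto \scB(\eta^+,\eta^+)+\scB(\eta^-,\eta^-)$ up to sign conventions, while $\theta(e_j)\propto\scB(e_j\cdot\eta,\eta)$ mixes the two chiral parts. The identity $\langle\theta,\theta\rangle=c\otimes c$ should then follow from the Fierz identity $\rho\cdot\rho=\scB(\rho\text{-trace})\rho$ (i.e. $(\eta\otimes\eta)^2=\scB(\eta,\eta)\,\eta\otimes\eta$). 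Computing $(c+\theta)^2=c^2+\langle\theta,\theta\rangle+2c\theta$ in the Clifford algebra, the trace of $\eta\otimes\eta$ is proportional to $c$, and comparing scalar parts gives $\langle\theta,\theta\rangle=c^2$. Alternatively, one can verify this directly in the matrix model: with $\eta=(a,b)$, one finds $c\sim ab$, $\theta\sim(a^2\pm b^2)$ components, and $\theta(e_1)^2+\theta(e_2)^2=(ab)^2$ up to constants.

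It remains to show that $\theta$ is nowhere vanishing. If $\theta=0$ at a point, then $c=0$ there too, so $\rho=0$, which forces $\eta\otimes\eta=0$ and hence $\eta=0$, contradicting the assumption that $\eta$ is nowhere vanishing.

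\textbf{Sufficiency.} Given $c+\theta$ satisfying \eqref{eq:surfacealgebraic}, I would show pointwise that the locus of squares in $\C\oplus\C^2$ is exactly the quadric $\{\theta_1^2+\theta_2^2=c^2\}$ minus the origin. The squaring map $\C^2\setminus 0\to\C^3$ is a $2:1$ map onto this cone minus the origin, which is clear from the explicit formulas above by a dimension and parametrization check. The global step is the main obstacle. The square root $\eta$ is determined only up to sign, so one must construct a spinor bundle in which a global square root exists. To handle this, I would use the freedom "for some bundle of irreducible complex spinors" as follows.

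1. Fix any spin-c structure with characteristic bundle $\cL'$, so that $\cL'\otimes\cL'$ absorbs $\cL$ via tensoring with a square root twist.
2. Define the new spinor bundle $S$ as the line-bundle-valued double cover of local square roots, i.e. $S=S_0\otimes\mathcal{N}$ for a suitable line bundle $\mathcal{N}$ with $\mathcal{N}^2$ matching $\cL\otimes\cL_0^{-1}$.
3. Observe that the $\pm$ ambiguity is a $\Z_2$-bundle, which is absorbed by the real line bundle associated with it.

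This is the standard argument of \cite{Algebraic_Complex_Square_2025}, which I would invoke for the global existence of the spinor bundle.
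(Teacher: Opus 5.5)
Your pointwise argument is correct and more hands-on than the paper's. The paper feeds $\rho$ into the general criterion of \cite[Thm.~2.7]{Complex_Differential_Spinors_2026}: $\tau(\rho)=\rho$, $\rho\diamond\rho=2\rho^{(0)}\rho$, and a cubic condition involving an auxiliary form $\beta$. It then notes three things. The condition $\tau(\rho)=\rho$ removes degree two. The quadratic condition is $\langle\theta,\theta\rangle=c\otimes c$. The choice $\beta=\overline{\theta}$ gives $(\rho\diamond\overline{\theta})^{(0)}=|\theta|^2>0$ and makes the cubic condition automatic.

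Your $\scB$-symmetry argument and the Fierz identity $\rho^2=\mathrm{tr}(\rho)\,\rho=2c\,\rho$ recover the first two conditions by hand. For sufficiency you replace the general theorem by an explicit inversion of the squaring map onto the punctured cone. This gives a self-contained two-dimensional proof, whereas the paper's route is uniform with the other dimensions. State that inversion explicitly rather than as a ``dimension check'': with $u=\theta_1-i\theta_2$ and $w=\theta_1+i\theta_2$ one has $u\sim a^2$, $w\sim b^2$, $c\sim ab$, so take square roots and fix the sign of $b$ by $c$.

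One factual correction: $\scB$ does \emph{not} pair $\Sigma^\pm$ with themselves. You showed the volume form is $\scB$-antisymmetric, so the chirality operator $i e_1\cdot e_2$ is $\scB$-skew, which forces $\scB(\Sigma^\pm,\Sigma^\pm)=0$. Thus $c\propto\scB(\eta^+,\eta^-)$, and $\theta$ is built only from the pure terms $\scB(e_j\cdot\eta^\pm,\eta^\pm)$. This is exactly what your matrix model $c\sim ab$, $\theta\sim a^2,b^2$ says, and why $c$ vanishes precisely on the chiral locus. Your formula $\scB(\eta^+,\eta^+)+\scB(\eta^-,\eta^-)$ would vanish identically. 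The Fierz argument does not use this, so necessity stands.

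The genuine gap is step 2 of the globalization, where you assume a line bundle $\mathcal N$ with $\mathcal N^{2}\cong\cL\otimes\cL_0^{-1}$. On a closed surface no such $\mathcal N$ exists for an arbitrary Hermitian $\cL$ unless $c_1(\cL)$ is even; this is the condition $c_1(\cL)\bmod 2=w_2(TM)=0$ for a characteristic line. So this has to be \emph{derived from} $\rho$, and ``a suitable $\mathcal N$'' hides the only non-trivial point. The paper also compresses this into the word ``globalizing'', but it is a real step.

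One way to close it:
\begin{enumerate}
\item Fix a spin spinor bundle $S_0$ with $\C$-valued pairing $\scB_0$, and view $\rho$ via Clifford multiplication as a section of $\End(S_0)\otimes\cL$.
\item The matrix of $c+\theta$ is non-zero (since $\theta\neq0$) with determinant $c^2-\langle\theta,\theta\rangle=0$, so it has rank one.
\item By your pointwise result, its image is $K\otimes\cL$ for a line subbundle $K\subset S_0$, and its kernel is $K^{\perp_{\scB_0}}$.
\item Since $S_0/K^{\perp_{\scB_0}}\cong K^*$ via $\scB_0$, $\rho$ induces an isomorphism $K^*\to K\otimes\cL$, i.e.\ a trivialization of $K^{2}\otimes\cL$.
\item Hence $\mathcal N:=K^{-1}$ is a square root of $\cL$ (made unitary by rescaling its metric), and $S:=S_0\otimes\mathcal N$ has characteristic line $\cL$.
\item The tautological section of $K\otimes K^{-1}\subset S$ is a nowhere vanishing spinor whose square is $\rho$ under the identification $\mathcal N^2\cong\cL$ just constructed.
\end{enumerate}
This settles the existence of the square root of $\cL$ and the sign ambiguity at once. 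Your step 3, twisting by the flat line of the $\pm$ double cover, is valid, but only once some spinor bundle with characteristic line $\cL$ is already in hand.
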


\begin{proof}
Choose a symmetric admissible complex-bilinear pairing of positive adjoint type. Pointwise, $\rho \in \Omega_\C(M,\cL)$ is the square of a nowhere vanishing irreducible complex spinor if and only if the algebraic conditions in \cite[Thm.~2.7]{Complex_Differential_Spinors_2026} hold. Adapted to the present situation, they reduce to:
\begin{equation*}
\rho \diamond \rho = 2\rho^{(0)}\rho,\qquad \rho\diamond\beta\diamond\rho = 2(\rho \diamond \beta)^{(0)}\rho, \qquad \tau(\rho) = \rho   
\end{equation*}

for a complex exterior form $\beta$ such that $(\rho\diamond\beta)^{(0)} \neq 0$. The linear equation eliminates the degree-two component, so we can write $\rho = c + \theta$ for a section $c\in \Gamma(\cL)$ and an $\cL$-valued one-form $\theta\in\Omega^1_{\mathbb C}(M,\mathcal L)$. The first equation then becomes $\langle\theta,\theta\rangle=c\otimes c$. Since the spinor is nowhere vanishing by assumption, we have $\theta\neq 0$. To apply \cite[Thm.~2.7]{Complex_Differential_Spinors_2026}, we take $\beta=\overline{\theta}$. Then:
\begin{equation*}
(\rho\diamond\overline{\theta})^{(0)}=|\theta|^2>0.
\end{equation*}

With this choice of $\beta$, the second equation above follows automatically from the first. Globalizing this result by properly taking into account that $c$ is a section of $\cL$ and $\theta$ is an $\cL$-valued one-form, we obtain the statement in the proposition.
\end{proof}

\begin{remark}
The section $c\in\Gamma(\cL)$ may have zeros. At a point where $c=0$, the non-zero $\cL$-valued one-form $\theta$ is isotropic and belongs to one of the two complex Hodge eigenspaces, tensored with $\cL$. Such points are precisely the pointwise chiral locus of the corresponding spinor, see \cite[Cor.~5.2]{Algebraic_Complex_Square_2025}.
\end{remark}

\begin{prop}
\label{prop:complex_Killing_2d}
An oriented Riemannian surface $(M,g)$ admits a generalized Killing spinor with complex Killing function $i\mu-\alpha$ if and only if there exist a Hermitian line bundle $\mathcal L$, a unitary connection $A$ on $\cL$, a section $c\in\Gamma(\cL)$, and a nowhere vanishing $\cL$-valued complex one-form $\theta$ satisfying:
\begin{equation}
\label{eq:surfacecovariant}
\nabla^A_Xc=(\mu+i\alpha)\theta(X),\qquad
\nabla^{g,A}_X\theta=(\mu+i\alpha)X^\flat\otimes c,\qquad
\langle\theta,\theta\rangle=c\otimes c
\end{equation}
for every real vector field $X\in\Gamma(TM)$.
\end{prop}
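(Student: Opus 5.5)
Throughout, let $\rho:=\eta\otimes\eta\in\Omega_\C(M,\cL)$ be the complex-bilinear square of $\eta$, built from $\scB$ together with the standard identification of $\mathrm{End}(S)$ with $\wedge T^*_\C M$. The plan is to translate the Killing equation on $\eta$ into a differential equation on $\rho$ and then use Proposition~\ref{prop:algebraicsquare} to read off its components.

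\emph{Forward direction.} Suppose $\eta$ solves Definition~\ref{def:complex-Kill-eq}. By Proposition~\ref{prop:algebraicsquare}, $\rho=c+\theta$ with $\langle\theta,\theta\rangle=c\otimes c$ and $\theta$ nowhere vanishing, because $\eta$ is nowhere vanishing. The square map is compatible with connections: $\nabla^{g,A}$ on $S$ induces the connection $\nabla^{g,A}$ on $\wedge T^*_\C M\otimes\cL$, where $\cL$ carries the connection determined by $A$. By the Leibniz rule,
\[
\nabla^{g,A}_X\rho=\frac{i\alpha+\mu}{2}\bigl((X\cdot\eta)\otimes\eta+\eta\otimes(X\cdot\eta)\bigr).
\]
In the Clifford model with $\delta_1=\delta_2=1$, the endomorphism $\eta\otimes\eta$ corresponds to $\rho$. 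Multiplication by $X$ on the left corresponds to $X^\flat\diamond\rho$. Using positive adjoint type, multiplication on the second slot corresponds to $\rho\diamond X^\flat$, possibly up to the transpose $\tau$, which fixes $\rho$. Hence
\[
\nabla^{g,A}_X\rho=\frac{\mu+i\alpha}{2}\bigl(X^\flat\diamond\rho+\rho\diamond X^\flat\bigr).
\]
For the anticommutator, $X^\flat\diamond c+c\diamond X^\flat=2X^\flat\otimes c$ and $X^\flat\diamond\theta+\theta\diamond X^\flat=2\theta(X)$. So the right side equals $(\mu+i\alpha)\bigl(\theta(X)+X^\flat\otimes c\bigr)$, and the degree-two parts cancel. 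Splitting by degree yields the two differential equations in~\eqref{eq:surfacecovariant}, and the third equation comes from Proposition~\ref{prop:algebraicsquare}.

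\emph{Converse.} Given $(\cL,A,c,\theta)$ satisfying~\eqref{eq:surfacecovariant}, Proposition~\ref{prop:algebraicsquare} produces a spinor bundle $S$ with characteristic bundle $\cL$ and a nowhere vanishing $\eta$ such that $\rho=c+\theta$ is its square. Let $\nabla^{g,A}$ be the spin-c connection induced by $A$, and set
\[
\psi_X:=\nabla^{g,A}_X\eta-\frac{i\alpha+\mu}{2}X\cdot\eta .
\]
The computation above, read backwards, shows that $\nabla^{g,A}_X\rho$ equals the derivative of the square along the Killing flow. Hence $\eta\otimes\psi_X+\psi_X\otimes\eta=0$, that is, the symmetrized product of $\eta$ and $\psi_X$ vanishes. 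This forces $\psi_X=f(X)\eta$ for some one-form $f$ with $2f(X)\eta\otimes\eta=0$, so $f=0$.

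The step I expect to be the main obstacle is justifying that $\eta\otimes\psi+\psi\otimes\eta=0$ with $\eta\neq0$ implies $\psi=0$. I would argue it by pairing both sides with $\eta$ through $\scB$: non-degeneracy then gives $\psi=\lambda\eta$, and substituting back gives $2\lambda\,\eta\otimes\eta=0$, so $\lambda=0$.

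A second subtlety is the construction of $S$ itself. Proposition~\ref{prop:algebraicsquare} gives $\eta$ only up to sign and pointwise, so one must check that the spinor bundle and $\eta$ can be chosen globally with characteristic bundle $\cL$. If a sign ambiguity arises, I would absorb it by twisting the spinor bundle by a real line bundle, which changes neither the square nor the characteristic bundle.

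The second delicate point is matching the Clifford products with $\diamond$ under the conventions $\delta_1=\delta_2=1$. I would verify this directly in an orthonormal frame using the two-dimensional model $\C^2$, so that the conventions are confirmed rather than assumed.
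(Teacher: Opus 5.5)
Your proposal is correct and follows the paper's route: translate the Killing equation into $\nabla^{g,A}_X\rho=\frac{\mu+i\alpha}{2}(X^\flat\diamond\rho+\rho\diamond X^\flat)$ and split by degree using $\rho=c+\theta$. The only difference is that the paper cites \cite[Lemma~5.3 and Cor.~5.7]{Complex_Differential_Spinors_2026} for the equivalence between the spinor equation and this equation on $\rho$, whereas you prove it directly, using the Leibniz rule and the fact that $\eta\otimes\psi+\psi\otimes\eta=0$ with $\eta\neq0$ forces $\psi=0$.

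Two small corrections. In that last step, contract with a test spinor $\xi$ satisfying $\scB(\eta,\xi)\neq0$ rather than with $\eta$ itself, because $\scB(\eta,\eta)$ is a multiple of $c$ and vanishes at chiral points. Also, no transpose is involved on the second slot: $\delta_2=1$ gives $\eta\otimes(X\cdot\eta)=\rho\diamond X^\flat$ directly.
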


\begin{proof}
By \cite[Lemma~5.3 and Cor.~5.7]{Complex_Differential_Spinors_2026}, a non-zero irreducible complex spinor $\eta$ on $(M,g)$ is Killing with complex Killing function $i\mu -\alpha$ if and only if its square $\rho = c + \theta$ satisfies:
\begin{equation*}
\nabla^{g,A}_X \rho = \frac{\mu+i\alpha}{2} (X^\flat\diamond\rho + \rho \diamond X^\flat).
\end{equation*}

By Proposition~\ref{prop:algebraicsquare}, we can write $\rho = c + \theta$ and, separating by degree, we obtain all equations in~\eqref{eq:surfacecovariant}.  
\end{proof}

The Hodge operator acts complex-linearly on the form factor and as the identity on $\cL$. Note that for a positively oriented real orthonormal coframe $\{e^1 , e^2\}$, we have $\ast e^1=e^2$, $\ast e^2=-e^1$, and $\mathrm{vol}_g =e^1\wedge e^2$.

\begin{remark} 
Let $\dd_A$ denote the covariant exterior derivative and let $\dd_A^*$ denote its formal adjoint for the induced $L^2$ Hermitian pairing, tested on compactly supported forms. Since $M$ has real dimension two and $A$ is unitary, $\dd_A^{\ast} = - \ast \dd_A \ast$. Alternating and taking the negative trace of the second covariant equation yields:
\begin{equation*} 
\dd_Ac=(\mu+i\alpha)\theta,\qquad
\dd_A\theta=0,\qquad
\dd_A^*\theta=-2(\mu+i\alpha)c
\end{equation*}

as a consequence of~\eqref{eq:surfacecovariant}.
\end{remark}


\subsection{Real invariants associated with a generalized Killing spinor}


A \emph{real triple} on $(M,g)$ is a triple $(N,s,v)$, where $N$, $s\in C^\infty(M,\R)$ and $v\in\Omega^1(M)$, satisfying $N>0$ and $16|v|^2=N^2-s^2$ and its \emph{regular locus} is $M^\circ:=\{v\neq0\}=\{N>|s|\}$. For instance, given a section $c \in \Gamma(\cL)$ and an $\cL$-valued one-form $\theta\in\Omega^1_{\mathbb C}(M,\mathcal L)$ satisfying~\eqref{eq:surfacealgebraic}, with $\theta$ nowhere vanishing, we define the following two real functions and a real one-form on $M$:
\begin{gather*}
N:=\sqrt{2(|c|^2+|\theta|^2)} \in C^{\infty}(M,\R),\qquad s:=\frac{2i*(\theta\wedge\overline{\theta})}{N} \in C^{\infty}(M,\R),\\
v:=\frac{\mathrm{Re}(\theta,c)_{\cL}}{N}\in\Omega^1(M).
\end{gather*}

Since $\theta$ is nowhere vanishing, $N$ is positive, and all three expressions are globally defined on $M$. We will refer to such a tuple $(N,s,v)$ as the \emph{real triple} associated with a given nowhere vanishing irreducible complex spinor on $(M,g)$, and the following lemma proves that it is indeed a real triple.

\begin{lemma}
\label{lemma:surfacerealalgebra}
Let $(N,s,v)$ be the real triple associated with a nowhere vanishing irreducible complex spinor on $(M,g)$. Then, for every $X,Y\in\Gamma(TM)$, we have:
\begin{gather}
16|v|^2 =N^2-s^2,\qquad 4|v|^2 = |c|^2 ,\qquad (\theta,c)_{\cL} = N v + i s\ast v , \qquad 4|\theta|^2 = N^2+s^2,
\label{eq:surfacerealalgebra}\\
\mathrm{Re}(\theta(Y),\theta(X))_{\cL} = \frac{s^2}{4}g(X,Y)+4v(X)v(Y),\qquad
\mathrm{Im}(\theta(Y),\theta(X))_{\cL} =\frac{Ns}{4} \mathrm{vol}_g(X,Y).
\label{eq:surfacehermitiantensor}
\end{gather}
\end{lemma}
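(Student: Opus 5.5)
The identities are pointwise and algebraic, so I will work at a fixed point $p$. There I choose a unit vector $\mathfrak l\in\cL_p$, so $\cL_p\cong\C$ isometrically, and a positively oriented orthonormal coframe $\{e^1,e^2\}$. I write $c=\gamma\,\mathfrak l$ and $\theta=(a e^1+b e^2)\otimes\mathfrak l$ with $\gamma,a,b\in\C$. The constraint $\langle\theta,\theta\rangle=c\otimes c$ from Proposition~\ref{prop:algebraicsquare} becomes $a^2+b^2=\gamma^2$. The natural decomposition is along the complex Hodge eigenspaces. Set
\[
u_\pm:=\tfrac12(a\mp i b),
\]
so that $\theta=u_+(e^1+ie^2)+u_-(e^1-ie^2)$ up to the $\mathfrak l$ factor. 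The constraint then reads $4u_+u_-=\gamma^2$.

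In these variables I compute the basic quantities:
\begin{itemize}
\item $|\theta|^2=|a|^2+|b|^2=2(|u_+|^2+|u_-|^2)$;
\item $\theta\wedge\overline\theta=(a\bar b-b\bar a)\,e^1\wedge e^2$, so $\ast(\theta\wedge\overline\theta)=2i\,\mathrm{Im}(a\bar b)$;
\item $(\theta,c)_\cL=\bar\gamma(a e^1+be^2)$.
\end{itemize}
The key observation is that $|\gamma|^2=4|u_+||u_-|$, so $|c|$ is determined by $|u_\pm|$. Writing $P=|u_+|^2$ and $Q=|u_-|^2$, I get
\[
N^2=2(|c|^2+|\theta|^2)=4\bigl(\sqrt P+\sqrt Q\bigr)^2,\qquad N=2\bigl(|u_+|+|u_-|\bigr).
\]
A direct expansion gives $2i\ast(\theta\wedge\overline\theta)=-4\,\mathrm{Im}(a\bar b)=4(|u_+|^2-|u_-|^2)$, with the sign to be checked. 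Hence
\[
s=2\bigl(|u_+|-|u_-|\bigr).
\]
From this, $N^2-s^2=16|u_+||u_-|=4|c|^2$ and $N^2+s^2=8(P+Q)=4|\theta|^2$, which are two of the claimed identities.

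The formula $(\theta,c)_\cL=Nv+is\ast v$ is the heart of the lemma, and I expect it to be the main obstacle. By the definition of $v$, its real part is tautological, so only the imaginary part needs proof: $\mathrm{Im}(\theta,c)_\cL=s\ast v$. My plan is to use the residual freedom. Rotating the frame and rephasing $\mathfrak l$, I can arrange that $u_+,u_-\ge0$ are real, with $\gamma=2\sqrt{u_+u_-}$ real. This requires choosing the sign of $\gamma$, which is allowed because only $\gamma^2$ is constrained. Then $a=u_++u_-$ and $b=i(u_--u_+)$ are explicit, and
\[
(\theta,c)_\cL=\gamma\bigl((u_++u_-)e^1+i(u_--u_+)e^2\bigr).
\]
This gives $Nv=\gamma(u_++u_-)e^1$. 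Since $\ast e^1=e^2$, the imaginary part is $s\ast v$ up to the orientation sign. The check is coordinate-free, so a single gauge suffices.

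The remaining identities follow from the first ones and the same gauge. Taking norms, $|v|^2=|c|^2|a|^2\ldots$ is awkward in general, but in the gauge it is easy: $|v|=\gamma(u_++u_-)/N=\gamma/2$. This gives $4|v|^2=|c|^2$, and hence $16|v|^2=4|c|^2=N^2-s^2$. For \eqref{eq:surfacehermitiantensor}, I compute $(\theta(Y),\theta(X))$ in the gauge on basis vectors:
\begin{itemize}
\item the $(1,1)$ entry is $(u_++u_-)^2$;
\item the $(2,2)$ entry is $(u_+-u_-)^2=s^2/4$;
\item the $(1,2)$ off-diagonal entry is purely imaginary, equal to $\pm i(u_+^2-u_-^2)=\pm iNs/4$.
\end{itemize}
Using $4v(e_1)^2=\gamma^2=4u_+u_-$, the $(1,1)$ entry equals $s^2/4+4v(e_1)^2$, which is the claimed formula. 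The antisymmetric part gives $\tfrac{Ns}{4}\mathrm{vol}_g$. Orientation and sign conventions are the likely source of errors, and I will fix them by this explicit check.
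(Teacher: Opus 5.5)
Your argument is correct, but it is organized differently from the paper's proof. The paper works in an arbitrary positively oriented frame with components $\theta_1,\theta_2,k$ and never normalizes. It obtains the first line from invariant quadratic identities: the norm of $\mathrm{Re}(\theta,c)_{\cL}$, the relation $\langle(\theta,c)_{\cL},(\theta,c)_{\cL}\rangle=|c|^4$, and the value of $\ast(\mathrm{Re}(\theta,c)_{\cL}\wedge\mathrm{Im}(\theta,c)_{\cL})$. Together these force $\mathrm{Im}(\theta,c)_{\cL}=\frac{s}{N}\ast\mathrm{Re}(\theta,c)_{\cL}$. The paper then proves the Hermitian identities by writing $\theta=|c|^{-2}(\theta,c)_{\cL}\otimes c$ on $\{c\neq0\}$, with a separate evaluation at chiral points, where $\theta_2=\pm i\theta_1$.

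Your Hodge splitting instead yields closed formulas, $N=2(|u_+|+|u_-|)$, $s=2(|u_+|-|u_-|)$ and $|c|^2=4|u_+||u_-|$. These make the identities for $N^2\pm s^2$ immediate and show transparently that $s=\pm N$ exactly at chiral points. Your sign in $2i\ast(\theta\wedge\overline\theta)=4(|u_+|^2-|u_-|^2)$ is correct. A rotation of the positively oriented coframe multiplies $u_+$ and $u_-$ by opposite phases $e^{\pm i\psi}$, while rephasing $\mathfrak l$ multiplies both by the same phase. So you can make $u_\pm\geq0$ simultaneously and check everything in one real normal form. This normal form includes the chiral case $u_+u_-=0$, so no case split is needed. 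The price is that you must note that every quantity in the lemma is independent of the frame and of the unit section, which is true.

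Two small corrections will remove all your hedged signs.

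First, your own decomposition $\theta=u_+(e^1+ie^2)+u_-(e^1-ie^2)$ gives $b=i(u_+-u_-)$, not $i(u_--u_+)$. With the correct sign you get exactly $\mathrm{Im}(\theta,c)_{\cL}=\gamma(u_+-u_-)e^2=s\ast v$, using $\ast e^1=e^2$. You also get $(\theta(e_2),\theta(e_1))_{\cL}=b\bar a=i(u_+^2-u_-^2)$, whose imaginary part is $\frac{Ns}{4}\mathrm{vol}_g(e_1,e_2)$. There is no residual orientation ambiguity.

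Second, your justification for taking $\gamma\geq0$ should be one of the following:
\begin{itemize}
\item the residual gauge freedom: rotating the frame by $\pi$ and replacing $\mathfrak l$ by $-\mathfrak l$ fixes $u_\pm$ and flips $\gamma$;
\item the observation that every claimed identity is invariant under $c\mapsto-c$, since $v\mapsto-v$ and $(\theta,c)_{\cL}\mapsto-(\theta,c)_{\cL}$.
\end{itemize}
Alternatively, simply write $|v|=|\gamma|/2$ and avoid choosing a sign at all.
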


\begin{proof}
Choose a real, positively oriented orthonormal coframe $\{e^1, e^2\}$ and a local unit section $\mathfrak l$ of $\cL$. We write:
\begin{equation*}
c=k\mathfrak l,\qquad \theta=(\theta_1e^1+\theta_2e^2)\otimes\mathfrak l
\end{equation*}

for local functions $k$, $\theta_1$, and $\theta_2$. The algebraic relation $\langle \theta , \theta \rangle = c\otimes c$ reduces to $\theta_1^2+\theta_2^2=k^2$. A direct computation gives:
\begin{align*}
(i*(\theta\wedge\overline{\theta}))^2  &=(|\theta_1|^2+|\theta_2|^2)^2 - |\theta_1^2+\theta_2^2|^2 = |\theta|^4-|c|^4,\\
\vert \mathrm{Re}(\theta , c)_{\cL}\vert^2 &=\sum_{j=1}^2\mathrm{Re}(\overline{k} \theta_j)^2 =\frac{|c|^2(|c|^2+|\theta|^2)}2.
\end{align*}

On the other hand:
\begin{equation*}
\langle(\theta,c)_{\cL},(\theta,c)_{\cL} \rangle = |c|^4
\end{equation*}

as well as:
\begin{equation*}
\vert \mathrm{Re}(\theta,c)_{\cL}\vert^2 = \frac{N^2|c|^2}{4}, \qquad  \ast (\mathrm{Re}(\theta,c)_{\cL} \wedge\mathrm{Im}(\theta,c)_{\cL}) =\frac{|c|^2}{2}\,i*(\theta\wedge\overline{\theta}) =\frac{Ns|c|^2}{4}.
\end{equation*}

The preceding two identities give:
\begin{equation*}
\mathrm{Im}(\theta,c)_{\cL}
 =\frac{s}{N}*\mathrm{Re}(\theta,c)_{\cL}=s*v.
\end{equation*}
Combining these identities together with $\mathrm{Re}(\theta,c)_{\cL}=Nv$ gives all equations in~\eqref{eq:surfacerealalgebra}.\medskip

Suppose $c\neq0$ locally. We have:
\begin{equation*}
\theta=\frac{(\theta,c)_{\cL}}{|c|^2}\otimes c =\frac{Nv+i s*v}{|c|^2}\otimes c.    
\end{equation*}

Consequently, for every $X,Y\in\Gamma(TM)$ we obtain:
\begin{align*}
4 |c|^2 \mathrm{Re}(\theta(Y),\theta(X))_{\cL} &= 4 (N^2v(X)v(Y)+s^2(*v)(X)(*v)(Y))\\
&= s^2 |c|^2 g(X,Y) + 16 |c|^2 v(X)v(Y),\\
4 |c|^2 \mathrm{Im}(\theta(Y),\theta(X))_{\cL} &= 4Ns (v\wedge*v)(X,Y) = N|c|^2 s \,\mathrm{vol}_g(X,Y),
\end{align*}

where we are using that $v\otimes v+(*v)\otimes(*v)=|v|^2g$. Using $4|c|^2=N^2-s^2$ gives~\eqref{eq:surfacehermitiantensor} on the locus where $c\neq 0$. To recover the same identities at $c=0$, condition $\langle \theta , \theta \rangle = c\otimes c$ and the assumption $\theta\neq 0$ give $\theta_2=\pm i\theta_1$ with $\theta_1\neq0$. Hence:
\begin{equation*}
N=2|\theta_1|,\qquad s=\pm N,\qquad v=0,    
\end{equation*}

with matching signs, and direct evaluation gives:
\begin{equation*}
\mathrm{Re}(\theta(Y),\theta(X))_{\cL}=|\theta_1|^2g(X,Y),\qquad \mathrm{Im}(\theta(Y),\theta(X))_{\cL}  =\pm|\theta_1|^2\mathrm{vol}_g (X,Y).
\end{equation*}

These are precisely~\eqref{eq:surfacehermitiantensor}. 
\end{proof}


\subsection{The differential system of a real triple} 


We now derive the differential system satisfied by the real triple $(N,s,v)$ associated with a generalized Killing spinor on $(M,g)$ with square $\rho = c + \theta$.

\begin{prop}
The real triple $(N, s, v)$ associated with a generalized Killing spinor on $(M,g)$ with Killing function $i\mu-\alpha$ satisfies the differential system:
\begin{gather}
\dd N=4\mu v,\qquad \dd s=4\alpha*v,\qquad \dd |c|^2=2N\mu\, v - 2s\alpha*v,
\label{eq:surfacerealsystem}\\
4 \nabla^g v=\mu N g + \alpha s\, \mathrm{vol}_g,\qquad 4 \nabla^g *v=\mu N \,\mathrm{vol}_g - \alpha s\,g,
\label{eq:nabla-v}\\
2 \mathscr{L}_{v^\sharp}g=\mu N g,\qquad 2 \mathscr{L}_{(*v)^\sharp}g=- \alpha s g, \qquad 2 \dd v=\alpha s \mathrm{vol}_g, \qquad 2\dd (*v)=\mu N \mathrm{vol}_g,
\label{eq:Lie-dd-v}\\
[v^\sharp,(*v)^\sharp]=0,\qquad \dd N((*v)^\sharp)=0,\qquad \dd s(v^\sharp)=0.
\label{eq:surfaceconformalfields}
\end{gather}
\end{prop}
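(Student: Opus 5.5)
The plan is to differentiate the defining expressions of $(N,s,v)$ directly, using the covariant system~\eqref{eq:surfacecovariant} of Proposition~\ref{prop:complex_Killing_2d}. Throughout, $\kappa:=\mu+i\alpha$. The connection $\nabla^{g,A}$ is metric for the Hermitian structure and commutes with Hodge star and contractions. The algebraic identities of Lemma~\ref{lemma:surfacerealalgebra} will convert the resulting complex expressions into real ones.

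First, compute $X|c|^2=2\mathrm{Re}(\kappa\theta(X),c)_\cL$. Writing $(\theta,c)_\cL=Nv+is{*v}$ gives $\mathrm{Re}(\kappa(Nv+is*v))=\mu Nv-\alpha s*v$, hence $\dd|c|^2=2N\mu v-2s\alpha*v$. Next, $X|\theta|^2=2\mathrm{Re}\sum_j(\kappa X^\flat(e_j)c,\theta(e_j))=2\mathrm{Re}(\kappa\overline{(\theta(X),c)})=2(\mu Nv+\alpha s*v)(X)$. Then $N^2=2(|c|^2+|\theta|^2)$ gives $2N\dd N=8\mu Nv$, that is, $\dd N=4\mu v$. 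The relation $4|\theta|^2=N^2+s^2$ gives $2s\,\dd s=8(\mu Nv+\alpha s*v)-8\mu Nv=8\alpha s*v$. This yields $\dd s=4\alpha*v$ where $s\neq0$. To get it everywhere, I would instead differentiate $Ns=2i*(\theta\wedge\bar\theta)$ directly. Its derivative is $2i*(\kappa X^\flat\wedge\bar\theta\, c+\theta\wedge\bar\kappa X^\flat \bar c)$, which after contraction becomes a combination of $(\theta,c)$ and its conjugate. Evaluating this with $(\theta,c)=Nv+is*v$ gives $N\dd s+s\dd N=4\alpha N*v+4\mu s v$, which combined with $\dd N=4\mu v$ yields $\dd s=4\alpha*v$ globally.

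For~\eqref{eq:nabla-v}, I will differentiate $Nv=\mathrm{Re}(\theta,c)_\cL$:
\begin{equation*}
\nabla_X(Nv)(Y)=\mathrm{Re}\big[(\kappa X^\flat(Y)c,c)+(\theta(Y),\kappa\theta(X))\big]=\mu|c|^2g(X,Y)+\mathrm{Re}\big(\bar\kappa(\theta(Y),\theta(X))\big).
\end{equation*}
The second term is evaluated with~\eqref{eq:surfacehermitiantensor}: it equals $\mu(\tfrac{s^2}{4}g+4v\otimes v)(X,Y)+\alpha\tfrac{Ns}{4}\mathrm{vol}_g(X,Y)$. Subtracting $\dd N(X)v(Y)=4\mu v(X)v(Y)$ and using $4|c|^2=N^2-s^2$ gives $N\nabla_Xv(Y)=\tfrac{\mu N^2}{4}g+\tfrac{\alpha Ns}{4}\mathrm{vol}_g$. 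Dividing by $N>0$ yields the first identity. The second follows since $\ast$ is parallel and $\ast$ applied in the $Y$ slot maps $g\mapsto\mathrm{vol}_g$ and $\mathrm{vol}_g\mapsto -g$, with signs to be checked in the coframe.

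Finally, the identities in~\eqref{eq:Lie-dd-v} and~\eqref{eq:surfaceconformalfields} are formal consequences. The Lie derivatives $\mathscr L_{v^\sharp}g$ and $\mathscr L_{(*v)^\sharp}g$ are the symmetrizations of $\nabla v$ and $\nabla *v$, while $\dd v$ and $\dd(*v)$ are their antisymmetrizations, with $\mathrm{vol}_g$ as the area form. The vanishing bracket follows from $[V,W]=\nabla_VW-\nabla_WV$ together with~\eqref{eq:nabla-v}: both terms contribute $\tfrac14(\mu N*v-\alpha s v)$-type expressions that cancel. The remaining two identities are immediate, since $\dd N((*v)^\sharp)=4\mu\, v(*v)^\sharp=0$ and similarly for $\dd s(v^\sharp)$. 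The main obstacle is the sign bookkeeping: the conventions for $\ast$, for the orientation of $\mathrm{Im}(\theta(Y),\theta(X))$, and for the contraction $\cL\otimes\bar\cL$. In addition, the derivation of $\dd s$ must be made valid at points where $s=0$, which requires the direct computation sketched above.
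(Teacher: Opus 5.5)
Your proposal is correct and follows the paper's proof essentially step by step: $\dd N$ from differentiating $|c|^2+|\theta|^2$, then $\dd s$ from differentiating $Ns=2i\ast(\theta\wedge\overline{\theta})$ directly. Next, $\nabla^g v$ from differentiating $Nv=\mathrm{Re}(\theta,c)_{\cL}$ with the Hermitian tensor identities, where the $v\otimes v$ terms cancel against $\dd N\otimes v$; finally, the remaining identities come from taking symmetric and skew parts, torsion-freeness, and $\langle v,\ast v\rangle=0$. Your preliminary derivation of $\dd s$ via $4|\theta|^2=N^2+s^2$ is unnecessary, and you rightly replace it with the $Ns$ computation, which is exactly how the paper handles points where $s=0$.
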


\begin{proof}
The metric compatibility of $\nabla^{g,A}$ and the tensor-product definition of $(-,-)_{\cL,g}$ give, for every real vector field $X$, the following equations:
\begin{align*}
X(|c|^2) &= 2\mathrm{Re} ((\mu+i\alpha)(\theta(X),c)_{\cL}),\\
X(|\theta|^2) &=2\mathrm{Re} ((\mu+i\alpha)(X^\flat\otimes c,\theta)_{\cL,g}) = 2\mathrm{Re}((\mu-i\alpha)(\theta(X),c)_{\cL}).
\end{align*}

In the last equality, we use $(X^\flat\otimes c,\theta)_{\cL,g}=\overline{(\theta(X),c)}_{\cL}$, obtained by evaluating the induced Hermitian metric in a real orthonormal frame. Hence:
\begin{equation*} 
X\bigl(|c|^2+|\theta|^2\bigr)  =4 \mu\,\mathrm{Re}(\theta(X) , c)_{\cL}.
\end{equation*}

Since $N^2=2(|\theta|^2+|c|^2)$ in~\eqref{eq:surfacerealalgebra}, it follows that $N\dd N=4\mu\,\mathrm{Re}(\theta,c)_{\cL}$ and hence $\dd N=4\mu v$.\medskip 

The tensor-product rule, using the conjugate connection on $\overline{\cL}$ and the parallel Hermitian contraction, gives:
\begin{align*}
X(Ns) &=2i\ast ((\mu+i\alpha)X^\flat\wedge\overline{(\theta,c)}_{\cL}-(\mu-i\alpha)X^\flat\wedge(\theta,c)_{\cL})\\
&=4\mu s\,v(X)+4\alpha N\,(*v)(X).
\end{align*}

Here we have used~\eqref{eq:surfacerealalgebra} and the identities
$*(X^\flat\wedge v)=-(*v)(X)$ and $*(X^\flat\wedge*v)=v(X)$. Subtracting $s\dd N$ and dividing by the positive function $N$ proves $\dd s=4\alpha*v$. Furthermore, since $4|c|^2 = N^2 - s^2$ in~\eqref{eq:surfacerealalgebra}, we have the last identity in~\eqref{eq:surfacerealsystem}.\medskip 

For the covariant derivative equations, for every $X,Y\in\Gamma(TM)$, we compute:
\begin{align*}
(\nabla^g_X(\theta,c)_{\cL})(Y)&=((\nabla^{g,A}_X\theta)(Y),c)_{\cL}+(\theta(Y),\nabla^A_Xc)_{\cL}\\
&= (\mu+i\alpha)|c|^2g(X,Y)+(\mu-i\alpha)(\theta(Y),\theta(X))_{\cL}.
\end{align*}

Taking real parts and using the pointwise identities~\eqref{eq:surfacehermitiantensor} yields:
\begin{equation*}
\nabla^g\mathrm{Re}(\theta,c)_{\cL}=\frac{\mu N^2}{4}g+4\mu v\otimes v+\frac{\alpha Ns}{4}\mathrm{vol}_g .
\end{equation*}

Differentiating the identity $Nv=\mathrm{Re}(\theta,c)_{\cL}$ that was obtained in Lemma~\ref{lemma:surfacerealalgebra}, and using $\dd N=4\mu v$ it follows that the $v\otimes v$ terms cancel, proving the first identity in~\eqref{eq:nabla-v}. Since the Hodge star operator is parallel and squares to minus the identity on one-forms, we obtain the second identity in~\eqref{eq:nabla-v}. Then we use $\mathscr{L}_X g = 2\mathrm{Sym}(\nabla^g X^\flat)$ and $\dd X^\flat = 2\mathrm{Skew}(\nabla^g X^\flat)$ to conclude~\eqref{eq:Lie-dd-v}. The last two identities in~\eqref{eq:surfaceconformalfields} follow, respectively, from evaluating $\dd N=4\mu v$ and $\dd s=4\alpha*v$ on $(\ast v)^{\sharp}$ and $v^\sharp$ and using $\langle v,*v\rangle=0$. Finally, we use that $\nabla^g$ is torsion-free and~\eqref{eq:nabla-v} to prove the first equation of~\eqref{eq:surfaceconformalfields}. 
\end{proof}

\begin{lemma}\label{lem:surface_zero_set}
Let $(N, s, v)$ be the real triple associated with a generalized Killing spinor with $i\mu-\alpha\not\equiv0$ on $(M,g)$. Then $v$ is zero at $m\in M$ if and only if $c(m)=0$, and the zero set of $c$, equivalently of $v$, is closed and discrete in $M$. 
\end{lemma}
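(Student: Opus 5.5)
The equivalence $v(m)=0\iff c(m)=0$ is immediate from Lemma~\ref{lemma:surfacerealalgebra}, which gives $4|v|^2=|c|^2$ pointwise. In particular the zero set $Z:=\{v=0\}=\{c=0\}$ is closed, since it is the zero set of the continuous section $c$. It remains to show that $Z$ is discrete.

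I would prove discreteness using the covariant equation \eqref{eq:nabla-v}, namely $4\nabla^g v=\mu N g+\alpha s\,\mathrm{vol}_g$. Fix $m\in Z$. At $m$ the endomorphism $\nabla^g v^\sharp$ equals $\frac14(\mu N\,\Id+\alpha s J)$, where $J$ denotes rotation by $\pi/2$. Its determinant is $\frac1{16}(\mu^2N^2+\alpha^2s^2)$. At $m$ we have $v=0$, hence $N=|s|>0$, so this determinant equals $\frac{N^2}{16}(\mu^2+\alpha^2)(m)$. Therefore, if $(i\mu-\alpha)(m)\neq0$, the point $m$ is a nondegenerate zero of the vector field $v^\sharp$. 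Nondegenerate zeros are isolated by the inverse function theorem applied to $v$ in a chart.

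The main obstacle is a zero $m$ at which $\mu(m)=\alpha(m)=0$, where the linearization degenerates. First I would show that $v$ cannot vanish on an open set. If it did, then $\nabla^g v=0$ there would force $\mu N=\alpha s=0$, hence $\mu=\alpha=0$ on that set, since $N=|s|>0$ there. Moreover, on such a set $c=0$, and the first equation of \eqref{eq:surfacecovariant} with $\theta\neq0$ also gives $\mu+i\alpha=0$. To go from this to a global statement, I would use that \eqref{eq:surfacecovariant} is a first-order linear system for $(c,\theta)$ with smooth coefficients, i.e.\ the pair is parallel for a modified connection. This gives unique continuation only for $(c,\theta)$, not for $\mu,\alpha$, so this route alone is insufficient.

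Instead I would argue directly at a degenerate zero, using $v$ together with $*v$. By \eqref{eq:surfaceconformalfields}, $v^\sharp$ and $(*v)^\sharp$ commute and are orthogonal with equal length, so $w:=v+i*v$ behaves like a holomorphic object. Indeed, \eqref{eq:nabla-v} says $\nabla^g w$ is complex-linear, since the $(0,1)$ part vanishes. In isothermal coordinates $z$, the combination $f:=v_1 - i v_2$ (the coefficient of $v$ in $\dd z$, up to conformal factor) should then satisfy $\bar\partial f = a f$ for a smooth $a$ built from Christoffel symbols, because the $\mu,\alpha$ terms enter only the $\partial$-component. By the similarity principle (Bers–Vekua), $f=e^{h}F$ with $F$ holomorphic. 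Since $v\not\equiv0$ on a connected $M$ (otherwise $c\equiv0$ and $\mu+i\alpha\equiv0$, contrary to assumption), $F\not\equiv0$, and the zeros of $v$, which are the zeros of $F$, are discrete. This step, verifying that the $\bar\partial$-equation has no inhomogeneous term, is the computation I expect to require the most care. With it in hand, discreteness also covers the degenerate case, which completes the proof.
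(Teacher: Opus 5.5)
Your proposal is correct and is essentially the paper's proof. The paper reads off from \eqref{eq:Lie-dd-v} that $v^\sharp$ is a conformal Killing field, so in isothermal coordinates $v^x+iv^y$ is holomorphic with no zeroth-order term; this makes both Bers--Vekua and your separate nondegenerate-zero analysis unnecessary. It then uses the identity theorem and connectedness to pass from $v\not\equiv0$ to $v$ not vanishing on any open set, which is exactly the cross-chart continuation hidden in your step ``$v\not\equiv0\Rightarrow F\not\equiv0$'' and should be spelled out.

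One correction to the computation you flagged: write $g=\lambda^2|\dd z|^2$ and $f=v(\partial_z)$. Then the $\dd z\otimes\dd z$-component of \eqref{eq:nabla-v} gives $\partial_z(\lambda^{-2}f)=0$, so the $\mu,\alpha$ terms sit in $\partial_{\bar z}f$, and it is $\bar f$ that satisfies the homogeneous $\bar\partial$-equation; this does not affect your conclusion.
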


\begin{proof}
If $v$ vanished identically, then~\eqref{eq:surfacerealalgebra} would give $s^2=N^2>0$, and the symmetric and skew-symmetric parts of the first equation in~\eqref{eq:nabla-v} would force $\mu=\alpha=0$ everywhere, a contradiction. Thus $v^\sharp$, and consequently $(\ast v)^{\sharp}$, are non-trivial conformal Killing vector fields on $(M,g)$. Choosing a local conformal coordinate $z=x+iy$, we write $v^\sharp = v^x\partial_x + v^y \partial_y$. The trace-free part of the conformal Killing equation gives:
\begin{equation*}
\partial_xv^x=\partial_yv^y,\qquad \partial_y v^x=-\partial_x v^y.
\end{equation*}

Hence $v^x+i v^y$ is holomorphic. The identity theorem and connectedness imply that a conformal field vanishing on a nonempty open subset vanishes identically. Therefore the local holomorphic coefficient of $v^\sharp$ is not identically zero on any coordinate disk, and consequently its zeros are isolated. Since $4|v|^2=|c|^2$ by Lemma~\ref{lemma:surfacerealalgebra}, these zeros are precisely those of $c$. Hence, the common zero set of $c$ and $v$ is closed and discrete.
\end{proof}

We denote by $M^{\circ}\subset M$ the complement in $M$ of the common zero set of $c$ and $v$. By the previous lemma, $M^\circ\subset M$ is a connected, open, and dense subset of $M$.


\subsection{Adapted coframes and local normal forms}  


Let $(N, s, v)$ be the real triple associated with a generalized Killing spinor on $(M,g)$. On the non-zero locus $M^\circ\subset M$ of $c$, we introduce the following positively oriented coframe:
\begin{equation}
\label{eq:surfaceadaptedcoframe}
e^1:=\frac{2v}{|c|},\qquad e^2:=\frac{2*v}{|c|}.
\end{equation}

By Lemma~\ref{lemma:surfacerealalgebra}, $4|v|^2=|c|^2$, so $\{e^1,e^2\}$ is a positively oriented orthonormal coframe. We will refer to such $\{e^1,e^2\}$ as the \emph{canonical coframe} of the real triple $(N, s, v)$. For further reference, we define the functions:
\begin{equation*}
\phi:=\operatorname{arctanh}\left(\frac{s}{N}\right),\qquad f:=\log|c|.
\end{equation*}

Note that:
\begin{equation*} 
(\theta,c)_{\cL} = |c|^2(\cosh(\phi)\,e^1+i\sinh(\phi)\,e^2),\qquad
\cosh(\phi) = \frac{N}{2|c|},\qquad \sinh(\phi)=\frac{s}{2|c|}.
\end{equation*}

Equivalently, $\theta=(\cosh(\phi)\,e^1+i\sinh(\phi)\,e^2)\otimes c$ on the regular locus $M^{\circ}\subset M$. Note that, since $c$ is nowhere vanishing on $M^{\circ}$, it follows that $\cL$ is trivializable when restricted to $M^{\circ}\subset M$.

\begin{prop}
\label{prop:complex_Killing_2d_frame}
Let $(N, s, v)$ be the real triple associated with a generalized Killing spinor on $(M,g)$ with complex-bilinear square $\rho = c + \theta$. Then its canonical coframe $\{e^1,e^2\}$ satisfies the following differential system on $M^{\circ}\subset M$:
\begin{equation}
\label{eq:surfaceframesystem}
\begin{gathered}
\dd e^1 = \alpha\sinh(\phi)\,e^1\wedge e^2,\qquad \dd e^2 = \mu\cosh(\phi)\,e^1\wedge e^2,\\
\dd\phi = -\mu\sinh(\phi)\,e^1+\alpha\cosh(\phi)\,e^2,\qquad \dd f = \mu\cosh(\phi)\,e^1-\alpha\sinh(\phi)\,e^2.
\end{gathered}
\end{equation}

Conversely, given a positively oriented orthonormal coframe $\{e^1,e^2\}$ and real functions $\phi,f,\alpha,\mu$, with $i\mu-\alpha \not\equiv 0$, satisfying the differential system~\eqref{eq:surfaceframesystem} on a connected surface $(M,g)$, there exists a generalized Killing spinor whose complex-bilinear square $\rho=c+\theta$ is given by:
\begin{equation*}
c = e^f \mathfrak l,\qquad \theta=e^f(\cosh(\phi)\,e^1+i\sinh(\phi)\,e^2)\otimes\mathfrak l
\end{equation*}

for a unit section $\mathfrak{l}$ of the trivial complex Hermitian line bundle and the unitary connection $A$ defined by:
\begin{equation*}
\nabla^A\mathfrak l =i(\alpha\cosh(\phi)\,e^1+\mu\sinh(\phi)\,e^2)\otimes\mathfrak l.
\end{equation*}
\end{prop}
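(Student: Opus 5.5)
The forward direction is a translation of the system \eqref{eq:nabla-v}, \eqref{eq:surfacerealsystem} into the canonical coframe \eqref{eq:surfaceadaptedcoframe}. The converse reverses this: from the frame data we build $(c,\theta,A)$ and check \eqref{eq:surfacecovariant}, then invoke Proposition~\ref{prop:complex_Killing_2d}.

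\emph{Forward direction.} On $M^\circ$ write $e^1=2v/|c|$ and $e^2=2{*v}/|c|$. The following identities come straight from Lemma~\ref{lemma:surfacerealalgebra} and the definitions of $\phi$ and $f$:
\begin{equation*}
|c|=2|v|,\qquad N=2|c|\cosh\phi,\qquad s=2|c|\sinh\phi .
\end{equation*}
The computation then proceeds in four steps.
\begin{itemize}
\item For $\dd f$: since $\dd|c|^2=2N\mu v-2s\alpha{*v}$ by \eqref{eq:surfacerealsystem}, we get
\begin{equation*}
\dd f=\frac{\dd|c|^2}{2|c|^2}=\frac{N\mu v-s\alpha{*v}}{|c|^2},
\end{equation*}
and substituting $v=\tfrac{|c|}{2}e^1$ and ${*v}=\tfrac{|c|}{2}e^2$ gives $\dd f=\mu\cosh\phi\,e^1-\alpha\sinh\phi\,e^2$.
\item For $\dd\phi$: we have $\dd\phi=\dfrac{N\,\dd s-s\,\dd N}{N^2-s^2}$. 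Inserting $\dd N=4\mu v$ and $\dd s=4\alpha{*v}$, and using $N^2-s^2=4|c|^2$, yields $\dd\phi=-\mu\sinh\phi\,e^1+\alpha\cosh\phi\,e^2$.
\item For $\dd e^1$: write $\dd e^1=\dd(2v/|c|)=\tfrac{2}{|c|}\bigl(\dd v-\dd f\wedge v\bigr)$. By \eqref{eq:Lie-dd-v}, $\dd v=\tfrac{\alpha s}{2}\mathrm{vol}_g$, and $\mathrm{vol}_g=e^1\wedge e^2$. Only the $e^2$-component of $\dd f$ survives the wedge with $v$. Collecting terms,
\begin{equation*}
\dd e^1=\Bigl(\frac{\alpha s}{|c|}-\alpha\sinh\phi\Bigr)e^1\wedge e^2=\alpha\sinh\phi\,e^1\wedge e^2 .
\end{equation*}
\item For $\dd e^2$: the same computation with $\dd({*v})=\tfrac{\mu N}{2}\mathrm{vol}_g$ gives
\begin{equation*}
\dd e^2=\Bigl(\frac{\mu N}{|c|}-\mu\cosh\phi\Bigr)e^1\wedge e^2=\mu\cosh\phi\,e^1\wedge e^2 .
\end{equation*}
\end{itemize}
This completes \eqref{eq:surfaceframesystem}.

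\emph{Converse.} Define $c$, $\theta$ and $A$ as in the statement. The algebraic identity is immediate, since $\langle\theta,\theta\rangle=e^{2f}(\cosh^2\phi-\sinh^2\phi)\,\mathfrak l^2=c\otimes c$. Moreover $\theta$ never vanishes because $\cosh\phi>0$.

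For the first equation in \eqref{eq:surfacecovariant} we compute
\begin{equation*}
\nabla^A c=e^f\bigl(\dd f+i(\alpha\cosh\phi\,e^1+\mu\sinh\phi\,e^2)\bigr)\otimes\mathfrak l .
\end{equation*}
Substituting $\dd f$ and expanding $(\mu+i\alpha)(\cosh\phi\,e^1+i\sinh\phi\,e^2)$, the two sides agree componentwise.

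The main work is the second equation, $\nabla^{g,A}\theta=(\mu+i\alpha)g\otimes c$. To compute $\nabla^g$ we need the Levi-Civita connection form $\omega$, defined by $\nabla e^1=-\omega\otimes e^2$ and $\nabla e^2=\omega\otimes e^1$. Cartan's structure equations $\dd e^1=-\omega\wedge e^2$ and $\dd e^2=\omega\wedge e^1$, combined with the first line of \eqref{eq:surfaceframesystem}, give
\begin{equation*}
\omega=-\alpha\sinh\phi\,e^1-\mu\cosh\phi\,e^2 .
\end{equation*}
It then remains to expand
\begin{equation*}
\nabla_X\Bigl(e^f(\cosh\phi\,e^1+i\sinh\phi\,e^2)\otimes\mathfrak l\Bigr),
\end{equation*}
using the formulas for $\dd f$, $\dd\phi$, $\omega$ and $\nabla^A\mathfrak l$. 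After expansion we check that the coefficients of $e^1\otimes e^1$ and $e^2\otimes e^2$ both equal $\mu+i\alpha$ and that the mixed terms cancel. This bookkeeping is the expected obstacle. I would organize it by writing $\theta=e^f(\cosh\phi\,e^1+i\sinh\phi\,e^2)$ and grouping terms by $\cosh^2\phi$ and $\sinh^2\phi$, so that the identity $\cosh^2\phi-\sinh^2\phi=1$ closes each component.

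Finally, Proposition~\ref{prop:algebraicsquare} shows that $\rho=c+\theta$ is the square of a spinor for the trivial line bundle, and Proposition~\ref{prop:complex_Killing_2d} shows that this spinor is a generalized Killing spinor with complex Killing function $i\mu-\alpha$.
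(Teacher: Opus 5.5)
Your proposal is correct and follows essentially the same route as the paper: the forward direction substitutes \eqref{eq:surfacerealsystem} and \eqref{eq:Lie-dd-v} into the canonical coframe. The converse uses Cartan's structure equations to get the Levi-Civita form (your $\omega$ is the paper's $-\varpi$), then verifies \eqref{eq:surfacecovariant} and invokes Proposition~\ref{prop:complex_Killing_2d}. The expansion you deferred does close as you predict: with $\beta=\alpha\cosh\phi\,e^1+\mu\sinh\phi\,e^2$, the one-forms multiplying $e^1$ and $e^2$ in $\nabla^{g,A}\theta$ (after factoring out $e^f\mathfrak l$) are $\cosh\phi\,(\dd f+i\beta)+\sinh\phi\,\dd\phi+i\sinh\phi\,\omega=(\mu+i\alpha)e^1$ and $i\sinh\phi\,(\dd f+i\beta)+i\cosh\phi\,\dd\phi-\cosh\phi\,\omega=(\mu+i\alpha)e^2$.
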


\begin{proof}
Since $4|c|^2=N^2-s^2$, the differential system~\eqref{eq:surfacerealsystem} implies:
\begin{equation*}
\dd(|c|^2)=2\mu N v-2\alpha s*v,
\qquad
\dd(\log|c|)=\frac{\mu N v-\alpha s*v}{|c|^2}.
\end{equation*}

In combination with~\eqref{eq:surfacerealsystem} and~\eqref{eq:surfaceadaptedcoframe}, these identities give the two structure equations and the equation for $\dd f$. Differentiating $\tanh(\phi)=s/N$ gives the equation for $\dd\phi$.\medskip

For the converse, given $\{e^1 , e^2\}$ and real functions $\phi,f,\alpha,\mu$ as specified in the statement, we define:
\begin{equation*}
\varpi:=\alpha\sinh(\phi)\,e^1+\mu\cosh(\phi)\,e^2,
\qquad \beta:=\alpha\cosh(\phi)\,e^1+\mu\sinh(\phi)\,e^2.
\end{equation*}

The first two equations of~\eqref{eq:surfaceframesystem}, together with Cartan's structure equations, give:
\begin{equation*}
\nabla^ge^1=\varpi\otimes e^2,\qquad
\nabla^ge^2=-\varpi\otimes e^1.
\end{equation*}

Now define the unitary connection $A$ by $\nabla^A\mathfrak{l}=i\beta\otimes\mathfrak{l}$. On the one hand, we have:
\begin{equation*}
\nabla^A c =e^f(\dd f+i\beta)\otimes\mathfrak l = (\mu+i\alpha)e^f
(\cosh(\phi)\,e^1+i\sinh(\phi)\,e^2)\otimes\mathfrak l = (\mu+i\alpha)\theta.
\end{equation*}

On the other hand, the equations for $\dd f,\dd\phi$ and the given formulas for $\varpi,\beta$ give the following identities:
\begin{align*}
\cosh(\phi)(\dd f+i\beta)+\sinh(\phi)\,\dd\phi-i\sinh(\phi)\,\varpi&=(\mu+i\alpha)e^1,\\
i\sinh(\phi)(\dd f+i\beta)+i\cosh(\phi)\,\dd\phi+\cosh(\phi)\,\varpi&=(\mu+i\alpha)e^2.
\end{align*}

These are the two coefficient one-forms in the tensor-product covariant derivative of:
\begin{equation*}
\theta := e^f(\cosh(\phi)\,e^1+i\sinh(\phi)\,e^2)\otimes\mathfrak{l}.
\end{equation*}

Therefore:
\begin{equation*}
\nabla^{g,A}\theta = e^f(\mu+i\alpha)(e^1\otimes e^1+e^2\otimes e^2) \otimes\mathfrak l =(\mu+i\alpha)g\otimes c.
\end{equation*}

Finally $\langle\theta,\theta\rangle=c\otimes c$ follows from $\cosh^2(\phi)-\sinh^2(\phi)=1$. We then conclude by Proposition~\ref{prop:complex_Killing_2d}.
\end{proof} 

\begin{thm} 
\label{thm:surfacelocalnormalform}
Let $(N, s, v)$ be a real triple such that $N>|s|$. The following two are equivalent:
\begin{enumerate}[label=\textup{(\roman*)}]
    \item There is a complex generalized Killing spinor with real triple $(N,s,v)$.
    \item Around each point of $M$, there are local coordinates $(x,y)$ such that:
    \begin{gather}
        \dd x =4(N^2-s^2)^{-1}v, \qquad \dd y =4(N^2-s^2)^{-1} * v,\label{eq:dx-dy}\\
        \dd N\wedge \dd x = 0, \qquad \dd s \wedge \dd y = 0,\label{eq:dN-ds}\\
        g = (N^2-s^2)(\dd x\otimes\dd x+\dd y\otimes\dd y).\label{eq:surfacelocalmetric}
    \end{gather}
\end{enumerate}
Furthermore, if the Killing function is $i\mu-\alpha$ and the complex-bilinear square is $\rho=c+\theta$, then, for a unit section $\mathfrak l$ on the complex Hermitian line bundle, we have:
\begin{gather}
    (N^2-s^2) \mu = \partial_x(N),\qquad (N^2-s^2)\alpha = \partial_y(s),\label{eq:derivadas-direccionales}\\
    2\rho = \sqrt{N^2-s^2}\,  (1+N\dd x+i s\dd y )\otimes\mathfrak l, \label{eq:surfacelocalsquare}\\
    \nabla^A\mathfrak l=i\beta\otimes\mathfrak l,\qquad (N^2-s^2)\beta = N\partial_y(s)\dd x+ s\partial_x(N) \dd y.\label{eq:surfacelocalconnection}
\end{gather}
\end{thm}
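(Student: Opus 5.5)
For (i)$\Rightarrow$(ii), I will work on $M^\circ=\{N>|s|\}$, where $|c|^2=(N^2-s^2)/4>0$. The first step is to show that the one-forms in \eqref{eq:dx-dy} are closed. Using \eqref{eq:surfacerealsystem} and \eqref{eq:Lie-dd-v}, I compute
\[
\dd\bigl((N^2-s^2)^{-1}v\bigr)=-(N^2-s^2)^{-2}\bigl(8N\mu v-8s\alpha\ast v\bigr)\wedge v+(N^2-s^2)^{-1}\tfrac{\alpha s}{2}\mathrm{vol}_g .
\]
Since $\ast v\wedge v=-|v|^2\mathrm{vol}_g$ and $16|v|^2=N^2-s^2$, the first term equals $-\tfrac{s\alpha}{2}(N^2-s^2)^{-1}\mathrm{vol}_g$. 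The two terms therefore cancel. An analogous computation, now using $2\dd(\ast v)=\mu N\mathrm{vol}_g$ and $v\wedge\ast v=|v|^2\mathrm{vol}_g$, gives closedness of $(N^2-s^2)^{-1}\ast v$. By the Poincaré lemma, locally there are functions $x,y$ with \eqref{eq:dx-dy}. The two differentials are orthogonal and of equal norm $4(N^2-s^2)^{-1}|v|=(N^2-s^2)^{-1/2}$, hence nonzero. So $(x,y)$ are local coordinates, and the metric is $g=(N^2-s^2)(\dd x^2+\dd y^2)$, which is \eqref{eq:surfacelocalmetric}.

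The second step reads \eqref{eq:dN-ds} off directly. Indeed $\dd N=4\mu v=\mu(N^2-s^2)\dd x$ and $\dd s=4\alpha\ast v=\alpha(N^2-s^2)\dd y$. These two identities also give \eqref{eq:derivadas-direccionales}. In particular $N=N(x)$ and $s=s(y)$ locally.

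The third step gives the formula \eqref{eq:surfacelocalsquare} for the square. In the canonical coframe \eqref{eq:surfaceadaptedcoframe} we have $e^1=2v/|c|=\sqrt{N^2-s^2}\,\dd x$ and $e^2=\sqrt{N^2-s^2}\,\dd y$. Combining this with $\cosh\phi=N/(2|c|)$, $\sinh\phi=s/(2|c|)$ and $\theta=(\cosh\phi\, e^1+i\sinh\phi\, e^2)\otimes c$, and choosing the unit section $\mathfrak l=c/|c|$, yields $2\rho=\sqrt{N^2-s^2}(1+N\dd x+is\dd y)\otimes\mathfrak l$. For the connection, Proposition~\ref{prop:complex_Killing_2d_frame} gives $\beta=\alpha\cosh\phi\,e^1+\mu\sinh\phi\,e^2=\tfrac12(\alpha N\dd x+\mu s\dd y)$. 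Substituting \eqref{eq:derivadas-direccionales} then gives \eqref{eq:surfacelocalconnection}, up to the normalization constant, which I will recheck. I must verify the factor carefully, since $\beta$ should follow from $\nabla^A c=(\mu+i\alpha)\theta$ applied to $c=|c|\mathfrak l$. I will recompute $\beta$ directly from that equation, and this will fix the constant.

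For (ii)$\Rightarrow$(i), I start from local coordinates with \eqref{eq:dx-dy}--\eqref{eq:surfacelocalmetric}. I define $\mu,\alpha$ by \eqref{eq:derivadas-direccionales}, which is consistent because $N=N(x)$ and $s=s(y)$. I set $e^1=\sqrt{N^2-s^2}\,\dd x$, $e^2=\sqrt{N^2-s^2}\,\dd y$, $\phi=\operatorname{arctanh}(s/N)$ and $f=\tfrac12\log((N^2-s^2)/4)$. The plan is to check the system \eqref{eq:surfaceframesystem} by direct differentiation and then invoke the converse in Proposition~\ref{prop:complex_Killing_2d_frame}. Each equation of the system reduces to a one-line computation with $N_x$ and $s_y$. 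The resulting local solutions have real triple $(N,s,v)$. Since $(N,s,v)$ is a given global object, the local spinor squares and connections agree on overlaps up to the choice of $\mathfrak l$, so they glue via Proposition~\ref{prop:complex_Killing_2d}.

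I expect the main obstacle to be this gluing, that is, the global existence of $\cL$, $c$ and $A$ from local data. On overlaps the local unit sections differ only by a $\U(1)$-valued function, and the transition data must be compatible. Since $c$ is nowhere zero here, I will define $\cL$ as trivial with $\mathfrak l=c/|c|$. I will then check that $\theta$ and $\beta$ are expressed globally through $(N,s,v)$ alone, which makes the construction global.
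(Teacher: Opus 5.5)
Your proposal is correct and follows the paper's proof essentially step by step. You prove closedness of $4(N^2-s^2)^{-1}v$ and $4(N^2-s^2)^{-1}\ast v$, read $\dd N$ and $\dd s$ off \eqref{eq:surfacerealsystem}, obtain the square via $\mathfrak l=c/|c|$, and extract $\beta$ from the imaginary part of $\nabla^Ac=(\mu+i\alpha)\theta$. That last equation is indeed the right source in direction (i)$\Rightarrow$(ii), since the connection formula in Proposition~\ref{prop:complex_Killing_2d_frame} belongs to its converse half. For (ii)$\Rightarrow$(i) you pass through the frame system \eqref{eq:surfaceframesystem}, whereas the paper checks \eqref{eq:surfacecovariant} directly, and you make the globalization explicit, which the paper leaves implicit.

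The constant you flagged is settled by $2|c|=\sqrt{N^2-s^2}$. It gives $\cosh(\phi)\,e^1=N\,\dd x$ and $\sinh(\phi)\,e^2=s\,\dd y$, so $\beta=\alpha N\,\dd x+\mu s\,\dd y$ with no factor $\tfrac12$. Substituting \eqref{eq:derivadas-direccionales} then yields \eqref{eq:surfacelocalconnection} exactly.
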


\begin{proof} 
The differential systems~\eqref{eq:surfacerealsystem} and~\eqref{eq:Lie-dd-v} give: 
\begin{equation*}
\label{eq:surface_closed_coordinates}
\dd \left(\frac{v}{|c|^2}\right) = 0,\qquad
\dd \left(\frac{*v}{|c|^2}\right)=0.
\end{equation*}

These closed one-forms are linearly independent at every point of the regular locus $M^{\circ}$. We choose local primitives $x$ and $y$ such that:
\begin{equation}
\label{eq:dx=v/|c|^2}
\dd x=\frac{v}{|c|^2},\qquad \dd y=\frac{*v}{|c|^2}. 
\end{equation}

The orthonormal coframe~\eqref{eq:surfaceadaptedcoframe} with $v=|c|^2\dd x$ then gives: 
\begin{equation*}
g=4|c|^2(\dd x\otimes\dd x+\dd y\otimes\dd y).
\end{equation*}

The first two equations of~\eqref{eq:surfacerealsystem} imply $\dd N \wedge \dd x =0$ and $\dd s \wedge \dd y =0$ and~\eqref{eq:derivadas-direccionales}. Since $c\neq 0$ on the regular locus $M^\circ$, we choose the local unit section $\mathfrak{l}:=c/|c|$. Then:
\begin{equation*}
c=|c|\mathfrak{l}=\frac{\sqrt{N^2-s^2}}{2}\mathfrak{l}.
\end{equation*}

The identity $(\theta,c)_{\cL}=Nv+i s*v$ gives~\eqref{eq:surfacelocalsquare}; the imaginary part of $\nabla^Ac=(\mu+i\alpha)\theta$ in the unit frame $\mathfrak l$ gives~\eqref{eq:surfacelocalconnection}.\medskip

For the converse, take the trivial complex Hermitian line bundle and the connection given in~\eqref{eq:surfacelocalconnection}, and set:
\begin{equation*}
e^1:=\sqrt{N^2-s^2}\,\dd x,\quad
e^2:=\sqrt{N^2-s^2}\,\dd y,\quad
c:=\frac{\sqrt{N^2-s^2}}2\mathfrak l,\quad
\theta:=\frac12(Ne^1+i s e^2)\otimes\mathfrak l.
\end{equation*}

Then $\langle\theta,\theta\rangle=c\otimes c$. The coframe connection is:
\begin{equation*}
\varpi=\frac{s\partial_y(s)}{N^2-s^2}\dd x+\frac{N\partial_x(N)}{N^2-s^2}\dd y.
\end{equation*}

The identities $N\beta-s\varpi=\partial_y(s)\dd x$ and $N\varpi-s\beta=\partial_x(N)\dd y$ imply:
\begin{equation*}
\nabla^{g,A}\theta
 =\frac{\partial_x(N)+i \partial_y(s)}2(\dd x\otimes e^1+\dd y\otimes e^2)\otimes\mathfrak l
 =(\mu+i\alpha)g\otimes c.
\end{equation*}

as well as:
\begin{align*}
\nabla^Ac&=\frac{\sqrt{N^2-s^2}}2\left(\frac{N\partial_x(N)\dd x-s\partial_y(s)\dd y}{N^2-s^2}+i\beta\right)\otimes\mathfrak l\\
&=(\mu+i\alpha)\frac{\sqrt{N^2-s^2}}2(N\dd x+i s\dd y)\otimes\mathfrak l\\
&=(\mu+i\alpha)\theta.
\end{align*}

Hence, all equations in~\eqref{eq:surfacecovariant} are satisfied and thus we conclude.
\end{proof}


\subsection{Complete Riemannian surfaces and normal forms}


In this subsection, we elaborate on the local characterization obtained in Theorem~\ref{thm:surfacelocalnormalform} to obtain a classification result for generalized Killing spinors on complete Riemannian surfaces.

\begin{prop}\label{prop:surface_killing_tensor}
Let $(N, s, v)$ be the real triple associated with a generalized Killing spinor on $(M,g)$. The symmetric tensor:
\begin{equation}\label{eq:surfacekillingtensor}
N^2g-16v\otimes v=s^2g+16(*v)\otimes(*v)
\end{equation}

is a positive semidefinite quadratic Killing tensor with eigenvalues $s^2,N^2$. If $g$ is complete, then:
\begin{equation}\label{eq:surfaceglobalordering}
|s(p)|\leq N(q)\quad\text{for all }p,q\in M;
\qquad \sup_M|s|\leq\inf_M N.
\end{equation}
\end{prop}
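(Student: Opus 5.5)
The plan is to check the algebraic identities pointwise, then compute the symmetrized covariant derivative with \eqref{eq:surfacerealsystem} and \eqref{eq:nabla-v}, and finally use the resulting first integral along geodesics.

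\textbf{Algebraic part.} In dimension two we have $v\otimes v+(*v)\otimes(*v)=|v|^2g$ and $16|v|^2=N^2-s^2$ by Lemma~\ref{lemma:surfacerealalgebra}. Hence
\begin{equation*}
N^2g-16v\otimes v=N^2g-16|v|^2g+16(*v)\otimes(*v)=s^2g+16(*v)\otimes(*v),
\end{equation*}
which is the equality in \eqref{eq:surfacekillingtensor}. On $M^\circ$, evaluate $T:=N^2g-16v\otimes v$ on the orthonormal frame dual to $\{e^1,e^2\}$ from \eqref{eq:surfaceadaptedcoframe}. This gives $T(e_1,e_1)=N^2-16|v|^2=s^2$, $T(e_2,e_2)=N^2$, and the mixed term vanishes. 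At points where $v=0$ we have $T=N^2g$ and $s^2=N^2$, so the eigenvalues are still $s^2,N^2$. Positive semidefiniteness follows from the second expression for $T$.

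\textbf{Killing tensor property.} I will show that the total symmetrization of $\nabla^gT$ vanishes. Using $\dd N=4\mu v$ and $4\nabla^g v=\mu Ng+\alpha s\,\mathrm{vol}_g$, we get
\begin{equation*}
(\nabla^g_XT)(Y,Z)=8\mu N\,v(X)g(Y,Z)-4\mu N\bigl(g(X,Y)v(Z)+g(X,Z)v(Y)\bigr)-4\alpha s\bigl(\mathrm{vol}_g(X,Y)v(Z)+\mathrm{vol}_g(X,Z)v(Y)\bigr).
\end{equation*}
Under full symmetrization in $(X,Y,Z)$, each of the three $\mu N$ terms becomes a multiple of $v\odot g$, with coefficients $8-4-4=0$. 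The $\alpha s$ terms die because $\mathrm{vol}_g$ is skew. Thus $T$ is a Killing tensor. Consequently, for every geodesic $\gamma$, the function $t\mapsto T(\dot\gamma,\dot\gamma)$ is constant.

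\textbf{Global ordering.} Assume $g$ is complete and fix $p,q\in M$. If $p=q$, the claim is $|s|\leq N$, which holds since $N^2-s^2=16|v|^2\geq0$. Otherwise, Hopf--Rinow gives a unit speed geodesic $\gamma$ from $p$ to $q$. The eigenvalue bounds give, at every point, $s^2|\dot\gamma|^2\leq T(\dot\gamma,\dot\gamma)\leq N^2|\dot\gamma|^2$. Evaluating at $p$ and at $q$ and using constancy of the first integral, we get
\begin{equation*}
s(p)^2\leq T(\dot\gamma,\dot\gamma)\leq N(q)^2,
\end{equation*}
that is, $|s(p)|\leq N(q)$. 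Taking the supremum over $p$ and the infimum over $q$ yields \eqref{eq:surfaceglobalordering}.

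The only step needing real care is the symmetrization bookkeeping. I expect it to be routine once the formula for $\nabla^g_XT$ above is written out.
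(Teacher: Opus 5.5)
Your proposal is correct and follows essentially the same route as the paper. You symmetrize $\nabla^g(N^2g-16v\otimes v)$ using $\dd N=4\mu v$ and \eqref{eq:nabla-v}, then compare the conserved quantity $T(\dot\gamma,\dot\gamma)$ at the endpoints of a Hopf--Rinow geodesic via the two expressions for the tensor; the only minor difference is that you obtain \eqref{eq:surfacekillingtensor} pointwise from $v\otimes v+(*v)\otimes(*v)=|v|^2g$ rather than from the local normal form on $M^\circ$, which covers the zeros of $v$ directly.
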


\begin{proof} 
First, we substitute~\eqref{eq:dx=v/|c|^2} with $|c|^2 = (N^2 - s^2)/ 4$ in~\eqref{eq:surfacelocalmetric} and obtain:
\begin{equation*}
(N^2-s^2)g = 16v\otimes v+16(*v)\otimes(*v)
\end{equation*}

and, consequently,~\eqref{eq:surfacekillingtensor}. Furthermore, we use $v((*v)^\sharp)=0$ and $*v(v^\sharp)=0$ to conclude that $K:=N^2g-16 v\otimes v$ is a quadratic form with eigenvalues $s^2$, $N^2$. Now we prove $K$ is Killing. To do so, we compute the covariant derivative and use $\dd N = 4\mu v$ in~\eqref{eq:surfacerealsystem} and $4 \nabla^g v = \mu N g + \alpha s\, \mathrm{vol}_g $ in~\eqref{eq:nabla-v} to obtain:
\begin{align*}
\nabla^gK &= 2N\dd N \otimes g - 16 \nabla^g v\otimes v  - 16 v\otimes \nabla^g v\\
&= 8N\mu  (v\otimes g) - 4 \mu N(g\otimes v + v\otimes g) - 4\alpha s\, (\mathrm{vol}_g \otimes v + v \otimes \mathrm{vol}_g).
\end{align*}

We take the symmetric part and obtain that:
\begin{equation*}
\mathrm{Sym} (\nabla^g K) = 8N\mu  \mathrm{Sym}(v\otimes g) - 4 \mu N \big(\mathrm{Sym}(g \otimes v) + \mathrm{Sym}(v \otimes g)\big)  =0,
\end{equation*}

and thus $K$ is Killing.\medskip

Assume now that $g$ is complete. Let $p,q\in M$ be distinct points in $M$. Since $M$ is connected, the Hopf--Rinow theorem provides a minimizing geodesic joining $p$ to $q$. Parameterizing it by arc length, we obtain:
\begin{equation*}
\gamma:[0,\ell]\longrightarrow M,\qquad
\ell:=d_g(p,q)>0,
\end{equation*}

such that:
\begin{equation*}
\gamma(0)=p,\qquad
\gamma(\ell)=q,\qquad
g_{\gamma(t)}(\dot\gamma(t),\dot\gamma(t))=1.
\end{equation*} 

Since $K$ is Killing, we have that $K_{p}(\dot\gamma(0),\dot\gamma(0))= K_{q}(\dot\gamma(\ell),\dot\gamma(\ell))$. Consequently, we have:
\begin{align*}
s(p)^2 &\leq s(p)^2+
16\bigl((*v)_p(\dot\gamma(0))\bigr)^2 \\
&= K_{p}(\dot\gamma(0),\dot\gamma(0))= K_{q}(\dot\gamma(\ell),\dot\gamma(\ell))
\\
&=
N(q)^2-
16\bigl(v_q(\dot\gamma(\ell))\bigr)^2\leq N(q)^2.
\end{align*}  

Since $N(q)>0$, taking nonnegative square roots finally gives $|s(p)|\leq N(q)$. Note that if $p=q$, the same inequality follows directly from $N(p)^2-s(p)^2=16|v_p|^2\geq 0$. We have therefore proved $|s(p)|\leq N(q)$ for every $p,q\in M$ and, consequently, this proves~\eqref{eq:surfaceglobalordering}. Note that neither the supremum nor the infimum needs to be attained. 
\end{proof}

\begin{lemma}
Let $(N, s, v)$ be the real triple associated with a generalized Killing spinor on $(M,g)$. If $(M,g)$ is complete, then the vector fields:
\begin{equation}\label{eq:surface_complete_fields}
X:=\frac{4v^\sharp}{N}, \, Y:=4(*v)^\sharp \in\Gamma(TM)
\end{equation}
 
are complete,  commute, both vanish precisely on the zero locus of $c\in \Gamma(\cL)$, and their flows preserve $M^\circ$.
\end{lemma}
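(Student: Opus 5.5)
The plan is to dispose of the algebraic claims first and then prove completeness by a bounded-speed argument.

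\emph{Zero locus.} Since $|v|^2=|c|^2/4$ and $N>0$, both $X$ and $Y$ vanish exactly where $c$ vanishes. By Lemma~\ref{lem:surface_zero_set} this zero set is closed and discrete.

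\emph{Commutation.} We have $[v^\sharp,(*v)^\sharp]=0$ by \eqref{eq:surfaceconformalfields}. For the rescaled pair,
\begin{equation*}
[X,Y]=\frac{16}{N}[v^\sharp,(*v)^\sharp]-16\,(*v)^\sharp\!\left(\frac1N\right)v^\sharp=\frac{16\,\dd N((*v)^\sharp)}{N^2}\,v^\sharp=0,
\end{equation*}
using $\dd N((*v)^\sharp)=0$ from \eqref{eq:surfaceconformalfields}.

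\emph{Completeness.} The approach is to bound both speeds uniformly and then invoke completeness of $g$. A vector field on a complete Riemannian manifold whose norm is bounded is complete: an integral curve defined on a finite maximal interval has finite length, hence stays in a bounded set, which is relatively compact by Hopf--Rinow, so the curve extends. We therefore need uniform bounds on $|X|$ and $|Y|$.

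For $X$, we have $|X|=4|v|/N=\sqrt{N^2-s^2}/N\le 1$. For $Y$, we have $|Y|=4|v|=\sqrt{N^2-s^2}$, which need not be bounded directly, so this is the main obstacle. The first attempt is to use first integrals along the flow rather than global bounds. Since $Y=4(*v)^\sharp$, the identity $\dd N((*v)^\sharp)=0$ shows that $N$ is constant along integral curves of $Y$. Along such a curve, $|Y|^2=N^2-s^2\le N^2$ is bounded by the constant value $N(\gamma(0))^2$. Hence every integral curve of $Y$ has bounded speed, and the argument above applies curve by curve: a finite-time escape would give finite length, contradicting Hopf--Rinow. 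The same first-integral reasoning could also be applied to $X$, using $\dd s(v^\sharp)=0$, though the global bound $|X|\le 1$ already suffices.

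\emph{Invariance of $M^\circ$.} The flows preserve the zero set of the field itself: a zero is a stationary point, and by uniqueness of integral curves no non-stationary curve can reach it. Since $X$ and $Y$ vanish exactly on $M\setminus M^\circ$, this set is invariant under both flows. Because the flows are diffeomorphisms, its complement $M^\circ$ is also invariant, which completes the argument.
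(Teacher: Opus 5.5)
Your proposal is correct and follows essentially the same route as the paper. Both arguments use the global bound $|X|\le 1$ and the fact that $N$ is a first integral of $Y$ (so each integral curve of $Y$ has speed at most its constant value of $N$). Both obtain commutation from $[v^\sharp,(*v)^\sharp]=0$ and $\dd N((*v)^\sharp)=0$, and both get invariance of $M^\circ$ from the flows fixing the common zero set $\{c=0\}$ and being diffeomorphisms.
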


\begin{proof}
Since a bounded vector field on a complete Riemannian manifold is complete and $|4v^\sharp/N|\leq1$, we have that $4v^\sharp/N$ is complete. On the other hand, by~\eqref{eq:surfaceconformalfields} we have $(*v)^\sharp(N)=0$. Along each integral curve of the second vector field, the speed is at most $N$, which is constant along the curve, so the flow is complete. By~\eqref{eq:surfaceconformalfields}, we compute:
\begin{equation*}
[4N^{-1}v^\sharp,4(*v)^\sharp] = 16N^{-1}[v^\sharp,(*v)^\sharp] -16(*v)^\sharp(N^{-1})v^\sharp=0.
\end{equation*}

Finally, since $N>0$ and $4|v|^2=|c|^2$, both vector fields vanish precisely on $\{c=0\}$. Their complete flows fix $\{c=0\}$ pointwise and are diffeomorphisms, so they preserve its complement.
\end{proof} 

To study the topology of the manifold $(M,g)$, we describe the regular locus $M^\circ$ as a quotient of its universal cover $\widetilde{M^\circ}$. We denote the covering map by $\pi\colon \widetilde{M^\circ} \to M^\circ$ and its deck transformation group by $\Lambda$. In particular, three direct consequences are that: (a) $\Lambda$ is discrete; (b) $M^\circ$ is diffeomorphic to $\widetilde{M^\circ}/\Lambda$; and (c) every object in $M^\circ$ lifts to a $\Lambda$-invariant object in $\widetilde{M^\circ}$. We denote the lifts of the objects on $M^\circ$ by $\widetilde{(\cdot)}$.

\begin{prop}
\label{prop:surfaceregularorbit}
Let $(N, s, v)$ be the real triple associated with a generalized Killing spinor on a complete Riemannian surface $(M,g)$. Then, on the universal cover $\widetilde{M^\circ}$, there are global coordinates $(x,y)\in I\times\mathbb R\subset\mathbb R ^2$ satisfying~\eqref{eq:dx-dy}--\eqref{eq:surfacelocalconnection} for the real triple $(\widetilde{N},\widetilde{s},\widetilde{v})$. Furthermore, the deck transformation group $\Lambda$ acts by translation on those coordinates and is therefore identified with a discrete subgroup of $(\mathbb R ^2, +)$. In particular, if $I\neq\mathbb R$, then $\Lambda$ must be vertical.
\end{prop}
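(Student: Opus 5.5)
The strategy is to integrate the two commuting complete vector fields $X=4v^\sharp/N$ and $Y=4(*v)^\sharp$ of the preceding lemma, lifted to $\widetilde{M^\circ}$, and to compare their flows with the local coordinates of Theorem~\ref{thm:surfacelocalnormalform}.

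On $M^\circ$ the closed one-forms $v/|c|^2$ and $*v/|c|^2$ from the proof of Theorem~\ref{thm:surfacelocalnormalform} pull back to exact forms on the simply connected $\widetilde{M^\circ}$. I would therefore fix global primitives $x,y$ with $\dd x=\widetilde v/|\widetilde c|^2$ and $\dd y=*\widetilde v/|\widetilde c|^2$. These already satisfy \eqref{eq:dx-dy}--\eqref{eq:surfacelocalconnection} globally; the last two hold because the unit section $c/|c|$ is global on $M^\circ$. Thus $\Phi=(x,y)\colon\widetilde{M^\circ}\to\mathbb R^2$ is a local diffeomorphism.

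Next I would compute how $X$ and $Y$ act on these coordinates. We have
\begin{equation*}
\dd x(Y)=0,\qquad \dd y(Y)=\frac{4|v|^2}{|c|^2}=1,\qquad \dd y(X)=0,\qquad \dd x(X)=\frac{4}{N}.
\end{equation*}
Hence $Y=\partial_y$ exactly, and $X=(4/N)\partial_x$ with $N=N(x)$ by \eqref{eq:dN-ds}. Reparametrizing $X$ by the factor $N(x)/4$ gives $\partial_x$, but its completeness is not automatic.

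Both fields are complete on $M$ and their flows preserve $M^\circ$, so their lifts are complete on $\widetilde{M^\circ}$ and commute. They therefore generate an $\mathbb R^2$-action whose orbits are open, since $X$ and $Y$ are linearly independent on $M^\circ$. Connectedness makes the action transitive, so $\widetilde{M^\circ}\cong\mathbb R^2/H$ for a discrete stabilizer $H$, and simple connectivity forces $H=0$.

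In the orbit coordinates $(t,u)$, where $t$ is the $X$-flow time and $u$ the $Y$-flow time, we get $y=u+y_0$ and $x=x(t)$ with $\dd x/\dd t=4/N(x)>0$. So $x$ is a strictly monotone function of $t$ alone, and its image is an open interval $I$. This shows that $\Phi$ is a diffeomorphism onto $I\times\mathbb R$. I expect the main care to lie in this step: one has to make sure the orbit map is a global diffeomorphism and not merely a covering. Transitivity together with simple connectivity should settle it.

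For the deck group, each $\gamma\in\Lambda$ preserves the lifted triple, and hence the closed forms $\dd x$ and $\dd y$. It follows that $x\circ\gamma-x$ and $y\circ\gamma-y$ are constants, so $\gamma$ acts by a translation $(a,b)$ on $I\times\mathbb R$. Deck transformations act freely and properly discontinuously, so the group of these translations is discrete in $(\mathbb R^2,+)$. Finally, if $I\neq\mathbb R$, then $I$ is bounded on at least one side, and invariance of $I$ under $x\mapsto x+a$ forces $a=0$. Hence $\Lambda$ consists of vertical translations.
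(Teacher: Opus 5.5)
Your proposal is correct and follows essentially the same route as the paper: lift the complete commuting fields $X$, $Y$ to get a transitive $\mathbb{R}^2$-action on $\widetilde{M^\circ}\cong\mathbb{R}^2$, reparametrize the $X$-direction through $1/N$ to obtain $x$ with image an open interval $I$, and use $\Lambda$-invariance of $\dd x$, $\dd y$ to see that deck transformations are translations; you fix the primitives $x,y$ before the orbit coordinates rather than after, and you make explicit the trivial stabilizer and the verticality claim for $I\neq\mathbb{R}$, which the paper leaves implicit. There is one harmless slip: since $4|v|^2=|c|^2$, one has $\dd x(X)=\frac{4|v|^2}{N|c|^2}=\frac{1}{N}$, so the rescaling factor is $N$ rather than $N/4$, but your argument only uses $\dd x(X)>0$.
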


\begin{proof}
On $\widetilde{M^\circ}$, the lifts $\widetilde{X}$, $\widetilde{Y}$ of the two complete commuting fields in~\eqref{eq:surface_complete_fields} are independent. Thus, their flows define an $\mathbb R^2$-action on $ \widetilde{M^\circ}$ whose orbits are two-dimensional. Since every orbit is open, $ \widetilde{M^\circ}$ has a single orbit because it is connected. Therefore, $\widetilde{M^\circ}$ is diffeomorphic to $\mathbb R^2$ and we can choose a coordinate system $(u,y)\in\mathbb R^2$ such that $\partial_u = \widetilde{X}$ and $\partial_y = \widetilde{Y}$ and $\dd u = \widetilde{N} |\widetilde{c}|^{-2} \widetilde{v}$ and $\dd y =  |\widetilde{c}|^{-2} * \widetilde{v}$.\medskip

By $\dd N = 4\mu v$ in~\eqref{eq:surfacerealsystem}, we have $\dd N \wedge \dd u =0$. Then we can change the coordinates by:
\begin{equation*}
x(u):=\int_{u_0}^u \frac{\dd\tau}{\widetilde{N}(\tau)}
\end{equation*}

with $I = x(\mathbb R)$. Therefore, we obtain $\dd x = \widetilde{N}^{-1} \dd u$ and global coordinates $(x,y)\in I\times \mathbb R$. Furthermore, because $\dd x = |\widetilde{c}|^{-2} \widetilde{v}$ and $\dd y =  |\widetilde{c}|^{-2} * \widetilde{v}$ , they coincide with the coordinate system in Theorem~\ref{thm:surfacelocalnormalform} and hence satisfy~\eqref{eq:dx-dy}--\eqref{eq:surfacelocalmetric}  for the real triple $(\widetilde{N},\widetilde{s},\widetilde{v})$. Finally, since $\dd x$ and $\dd y$ are lifts, they are $\Lambda$-invariant. In other words, for every $\gamma\in\Lambda$ we have $\gamma^* \dd x -\dd x =\dd (x \circ \gamma - x) = 0$. Consequently, in these coordinates, $\Lambda$ is identified with a subgroup of $(\mathbb R^2,+)$.
\end{proof}

A direct consequence of Theorem~\ref{thm:surfacelocalnormalform} and Proposition~\ref{prop:surfaceregularorbit} is the following characterization of generalized Killing spinors which are never chiral.

\begin{thm}
\label{thm:3.13}
Let $(M,g)$ be a complete Riemannian surface and let $(N,s,v)$ be a real triple with $N > |s|$. Then the following two are equivalent:
\begin{enumerate}[label=\textup{(\roman*)}]
    \item There exists a complex generalized Killing spinor with real triple $(N,s,v)$.
    \item On $\widetilde{M}$, there exist an open interval $I \subset \mathbb R$ and global coordinates $(x,y) \in I \times \mathbb R $ satisfying~\eqref{eq:dx-dy}--\eqref{eq:surfacelocalmetric} for the real triple $(\widetilde{N},\widetilde{s},\widetilde{v})$.
\end{enumerate}

Furthermore, if $I$ is an open interval, $\Lambda$ is a discrete subgroup acting by translations on $I\times \mathbb R$ and $(\widetilde{N},\widetilde{s},\widetilde{v})$ is a $\Lambda$-invariant real triple satisfying~\eqref{eq:dx-dy}--\eqref{eq:surfacelocalmetric} in the coordinates $(x,y)\in I\times \mathbb R$, then the Riemannian surface $((I\times\R)/\Lambda,g)$ with:
\begin{equation*}
\pi^*g=(\widetilde{N}^2-\widetilde{s}^2)(\dd x\otimes\dd x+\dd y\otimes\dd y),
\end{equation*}

where $\pi$ is the quotient projection, admits a complex generalized Killing spinor.
\end{thm}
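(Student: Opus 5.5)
Since the real triple satisfies $N>|s|$ everywhere, the spinor is nowhere chiral, so $M^\circ=M$ and $\widetilde{M^\circ}=\widetilde M$. The equivalence then follows by combining Proposition~\ref{prop:surfaceregularorbit} with the converse direction of Theorem~\ref{thm:surfacelocalnormalform}. The final assertion is handled by descending a $\Lambda$-invariant solution to the quotient.

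For (i)$\Rightarrow$(ii), I would apply Proposition~\ref{prop:surfaceregularorbit} directly. Completeness of $g$ gives the complete commuting fields $X=4v^\sharp/N$ and $Y=4(*v)^\sharp$ of~\eqref{eq:surface_complete_fields}. Their lifts to $\widetilde M$ generate a transitive $\mathbb R^2$-action, which yields coordinates $(u,y)\in\mathbb R^2$. Reparametrizing $u$ by $x=\int \dd u/\widetilde N$ is legitimate because $\dd N\wedge\dd u=0$. This produces global coordinates $(x,y)\in I\times\mathbb R$ with $I=x(\mathbb R)$ an open interval, satisfying~\eqref{eq:dx-dy}--\eqref{eq:surfacelocalmetric} for the lifted triple.

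For (ii)$\Rightarrow$(i), I would use the global coordinates on $\widetilde M$ together with the explicit data of Theorem~\ref{thm:surfacelocalnormalform}. These data are the trivial line bundle, $\rho$ given by~\eqref{eq:surfacelocalsquare}, the connection $\beta$ given by~\eqref{eq:surfacelocalconnection}, and $\mu,\alpha$ defined by~\eqref{eq:derivadas-direccionales}. The converse computation in that theorem then gives a solution of~\eqref{eq:surfacecovariant} on $\widetilde M$.

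The delicate part is getting this solution onto $M$ itself. Here $\widetilde N,\widetilde s,\widetilde v$ are lifts, hence invariant under the deck group $\Lambda$, and $\dd x,\dd y$ are invariant because of~\eqref{eq:dx-dy}. Consequently $\Lambda$ acts by translations, as in the proof of Proposition~\ref{prop:surfaceregularorbit}. All the data above are built from $\widetilde N,\widetilde s,\dd x,\dd y$ and their coordinate derivatives, so they are $\Lambda$-invariant. I would then define the $\Lambda$-action on the trivial bundle by the trivial lift, which fixes $\mathfrak l$. This keeps $\rho$ and $A$ invariant, so they descend to the quotient bundle over $M=\widetilde M/\Lambda$. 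Proposition~\ref{prop:complex_Killing_2d} then produces the spinor, and Proposition~\ref{prop:algebraicsquare} confirms that the descended $\rho$ is the square of a nowhere vanishing spinor.

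The final statement follows the same descent argument. Start from $I\times\mathbb R$ with a $\Lambda$-invariant triple satisfying~\eqref{eq:dx-dy}--\eqref{eq:surfacelocalmetric}, where $\Lambda$ is discrete and acts by translations. The group $\Lambda$ acts freely and properly discontinuously, so the quotient is a surface, and $\pi^*g$ is $\Lambda$-invariant. Theorem~\ref{thm:surfacelocalnormalform} gives $c,\theta,A$ upstairs, and these descend as above.

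I expect the main obstacle to be verifying that the translation action preserves all defining data, in particular the coordinate derivatives $\partial_x N$ and $\partial_y s$. This holds because translations commute with $\partial_x$ and $\partial_y$. A second point to check is orientation: the action must preserve the chosen orientation, and translations do. Finally, if $I\neq\mathbb R$, the group $\Lambda$ is only required to preserve $I\times\mathbb R$, which forces it to be vertical; this should be recorded but causes no difficulty.
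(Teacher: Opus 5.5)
Your proposal is correct and is essentially the paper's argument. The paper states the theorem as a direct consequence of Proposition~\ref{prop:surfaceregularorbit} for (i)$\Rightarrow$(ii), with $M^\circ=M$ because $N>|s|$, and of the converse construction in Theorem~\ref{thm:surfacelocalnormalform} for (ii)$\Rightarrow$(i) and for the quotient statement; your check that $\dd x$, $\dd y$, $\partial_xN$, $\partial_ys$, $\rho$ and $\beta$ are $\Lambda$-invariant, so that everything descends with the trivial lift on the trivial line bundle, just spells out the step the paper leaves implicit (the descent is in fact automatic, since $\beta=(s*\dd N-N*\dd s)/(N^2-s^2)$ and $\rho$ are built intrinsically from $(N,s,v)$).
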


Furthermore, if our aim is just to know the existence of nowhere chiral solutions, we have:

\begin{cor} 
On a complete Riemannian surface $(M,g)$, there exists a never chiral complex generalized Killing spinor if and only if, on the universal cover $\widetilde{M}$, there exist global coordinates $(x,y)$ and two $\Lambda$-invariant functions $\widetilde{N}= \widetilde{N}(x)$, $\widetilde{s} = \widetilde{s}(y)$ such that:
\begin{equation*}
\widetilde{g} = (\widetilde{N}(x) ^2 - \widetilde{s}(y)^2)(\dd x\otimes\dd x+\dd y\otimes\dd y).
\end{equation*}
\end{cor}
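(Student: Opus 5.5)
The plan is to reduce the corollary to Theorem~\ref{thm:3.13} together with the local normal form of Theorem~\ref{thm:surfacelocalnormalform}. By Lemma~\ref{lem:surface_zero_set}, a spinor that is never chiral has $c$ nowhere vanishing. Hence $M^\circ=M$, the universal cover $\widetilde{M^\circ}$ coincides with $\widetilde{M}$, and $\Lambda=\pi_1(M)$.

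For the forward direction, start from a never chiral solution on the complete surface $(M,g)$ and let $(N,s,v)$ be its real triple, which satisfies $N>|s|$ everywhere. Proposition~\ref{prop:surfaceregularorbit} then gives global coordinates $(x,y)\in I\times\R$ on $\widetilde{M}$ in which $\widetilde g=(\widetilde N^2-\widetilde s^2)(\dd x\otimes\dd x+\dd y\otimes\dd y)$. These coordinates also satisfy~\eqref{eq:dN-ds}, namely $\dd\widetilde N\wedge\dd x=0$ and $\dd\widetilde s\wedge\dd y=0$. Since the coordinate domain $I\times\R$ is a product of intervals, hence has connected slices, these relations imply $\widetilde N=\widetilde N(x)$ and $\widetilde s=\widetilde s(y)$. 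Both functions are lifts of functions on $M$, so they are $\Lambda$-invariant, which is the required form.

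For the converse, suppose we are given global coordinates $(x,y)$ on $\widetilde M$ and $\Lambda$-invariant functions $\widetilde N(x)$ and $\widetilde s(y)$ with $\widetilde g$ of the displayed form. I would define $\widetilde v:=\tfrac14(\widetilde N^2-\widetilde s^2)\,\dd x$. The key computation is that $\ast\dd x=\dd y$ for the conformally flat metric in this orientation, so $\ast\widetilde v=\tfrac14(\widetilde N^2-\widetilde s^2)\dd y$. This yields~\eqref{eq:dx-dy} directly. It also gives $16|\widetilde v|^2=(\widetilde N^2-\widetilde s^2)^2\,|\dd x|^2=\widetilde N^2-\widetilde s^2$, so $(\widetilde N,\widetilde s,\widetilde v)$ is a real triple, and~\eqref{eq:dN-ds} holds by the separation of variables. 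The converse part of Theorem~\ref{thm:surfacelocalnormalform} then builds $c,\theta,A$ on the trivial line bundle over $\widetilde M$ via~\eqref{eq:surfacelocalsquare} and~\eqref{eq:surfacelocalconnection}. The functions $\mu,\alpha$ come from~\eqref{eq:derivadas-direccionales}. All of these expressions are built from $\Lambda$-invariant data; for the forms $\dd x,\dd y$ to be $\Lambda$-invariant, $\Lambda$ must act by translations, and the quotient statement of Theorem~\ref{thm:3.13} then applies, giving a spinor on $M=\widetilde M/\Lambda$. This spinor is nowhere chiral because $|c|=\tfrac12\sqrt{N^2-s^2}>0$.

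I expect two steps to need the most care. The first is justifying, in the converse, that $\Lambda$ acts by translations in the given coordinates, so that the construction descends. I would either take this as part of the meaning of "global coordinates" in the statement, consistent with Theorem~\ref{thm:3.13}, or derive it from the $\Lambda$-invariance of $\widetilde N^2-\widetilde s^2$ together with the fact that $\Lambda$ acts by conformal isometries preserving the level sets of $x$ and $y$. The second is excluding the degenerate case $i\mu-\alpha\equiv0$, which forces $N,s$ to be constant. That case is either allowed as the parallel case or handled separately.
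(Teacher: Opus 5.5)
Your forward direction is fine, and your converse has the same structure as the paper's: set $4\widetilde v=(\widetilde N^2-\widetilde s^2)\,\dd x$, check that it gives a real triple in normal form, and invoke the quotient part of Theorem~\ref{thm:3.13}. However, the one step of the paper's proof that is not a citation is exactly the one you leave open. You must show that every deck transformation $\gamma\in\Lambda$ satisfies $\gamma^*\dd x=\dd x$, so that $\widetilde v$ (and $\theta$, which involves $\widetilde N\,\dd x+i\widetilde s\,\dd y$) descends to $M$.

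Neither of your two options closes this gap. Reading translation-invariance into the phrase ``global coordinates'' strengthens the hypothesis, which only asserts $\Lambda$-invariance of the \emph{functions} $\widetilde N$ and $\widetilde s$. Your second option is insufficient as stated. Even granting that $\gamma$ preserves the coordinate level sets, you only get $\gamma(x,y)=(\pm x+A,\pm y+B)$. For $\gamma(x,y)=(A-x,B-y)$ one has $\gamma^*\widetilde v=-\widetilde v$, so the construction would not descend.

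The missing ingredients are orientation and freeness of the deck action.
\begin{enumerate}
\item Since $\gamma$ is an isometry of $\widetilde g$ fixing $\widetilde N$ and $\widetilde s$, it fixes the conformal factor. Hence it is a Euclidean isometry in $(x,y)$. Invariance of $\widetilde N(x)$ and $\widetilde s(y)$ then gives $\gamma^*\dd x=\pm\dd x$ and $\gamma^*\dd y=\pm\dd y$. The exception is when both functions are constant, which is the parallel case you flagged.
\item Deck transformations preserve the lifted orientation, so $\gamma^*$ commutes with $\ast$. Since $\ast\dd x=\dd y$, the two signs must agree. This rules out the fixed-point-free glide reflections.
\item If both signs are negative, $\gamma(x,y)=(A-x,B-y)$ is a point reflection, hence a nontrivial involution. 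This is impossible for an element of a freely acting $\Lambda$. The paper notes that it fixes $(A/2,B/2)$; more intrinsically, $\Lambda\cong\pi_1(M)$ is torsion-free because $\widetilde M$ is contractible.
\end{enumerate}
Hence $\gamma^*\dd x=\dd x$ for all $\gamma\in\Lambda$, so $\widetilde v$ is $\Lambda$-invariant and Theorem~\ref{thm:3.13} concludes. This sign analysis, together with the freeness argument, is what the paper's proof consists of.
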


\begin{proof}
Set $4\widetilde v:= (\widetilde{N} ^2 - \widetilde{s}^2) \dd x$. If we show that $\widetilde v$ is $\Lambda$-invariant, then we apply Theorem~\ref{thm:3.13} and have finished the proof. Let $\gamma\in\Lambda$ be different from the identity. Since $\widetilde g$ is the lift of $g$, $\dd \widetilde N \wedge \dd x = 0$ and $\dd \widetilde s \wedge \dd y = 0$, we have that:
\begin{equation*}
\gamma^*\dd x = \pm \dd x,\qquad \gamma^*\dd y = \pm \dd y.
\end{equation*}

If $\gamma^*\dd x = \dd x$ or $\gamma^*\dd x = - \dd x$, then, using the $\Lambda$-invariance of the Hodge star operator, we have $\gamma^*\dd y =  \dd y$ or $\gamma^*\dd y = -\dd y$, respectively. When $\gamma^*\dd x = - \dd x$, there exist two constants $A$, $B\in\mathbb R$ such that $\gamma(x,y) = (A-x,B-y)$ and hence $\gamma$ has a fixed point. Because $\Lambda$ acts freely, $\gamma$ must be the identity, a contradiction. Hence $\gamma^*\dd x = \dd x$, for all $\gamma\in\Lambda$ and, consequently, $\widetilde v = $ is $\Lambda$-invariant.
\end{proof}

Now we turn our attention to generalized Killing spinors which are chiral at some point, that is, $c$ is somewhere zero. First, we study the number of zeros $c$ can admit and the topology of $M$.
 
\begin{prop}
\label{prop:surfacefilledends}
Let $(M,g)$ be a complete Riemannian surface admitting a generalized Killing spinor with $i\mu-\alpha\not\equiv0$ and square $\rho = c + \theta$, where $c$ has at least one zero. Then precisely one of the following holds:
\begin{equation*}
M\cong\mathbb R^2\text{ with one zero of }c;\qquad
M\cong S^2\text{ with two zeros of }c.
\end{equation*}

In particular, $M^{\circ}$ is diffeomorphic to a cylinder. The metric restricted to $M^{\circ}$ is rotationally symmetric, and either $\alpha\equiv0$ or $\mu\equiv0$. On $M^\circ$ there are oriented coordinates:
\begin{equation}
\label{eq:surfacepolarmetric}
g=\dd r\otimes\dd r+h(r)^2\dd t\otimes\dd t, \qquad t\in\mathbb R/(2\pi\mathbb Z),
\end{equation}

where $0<r<\infty$ for the plane and $0<r<L$ for the sphere. At a zero of $c$, $h$ extends smoothly as an odd function of the geodesic radius, with derivative one in the direction away from that zero.
\end{prop}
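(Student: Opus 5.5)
The plan is to study the local structure of the two commuting complete fields $X=4v^\sharp/N$ and $Y=4(*v)^\sharp$ near a zero $m$ of $c$, and then globalize using completeness. By Lemma~\ref{lem:surface_zero_set}, $v^\sharp$ and $(*v)^\sharp$ are conformal Killing fields with isolated zeros. At a zero $m$ we have $v_m=0$. Then~\eqref{eq:nabla-v} gives the linearizations
\begin{equation*}
4\nabla^g v^\sharp=\mu N\,\Id+\alpha s\,J,\qquad 4\nabla^g(*v)^\sharp=\mu N\,J-\alpha s\,\Id,
\end{equation*}
where $J$ is the rotation by $\pi/2$. Since the zero is isolated and the holomorphic coefficient has a zero of finite order, I would first show the zero is nondegenerate. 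Note that $s(m)=\pm N(m)\neq0$ and $N>0$, so nondegeneracy is equivalent to $(\mu(m),\alpha(m))\neq(0,0)$. I would try to obtain this from the Killing tensor $K$ of Proposition~\ref{prop:surface_killing_tensor}: at $m$ we have $K_m=N(m)^2g_m$, and its constancy along geodesics from $m$ should force the structure below.

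The key step is to prove that exactly one of $\mu,\alpha$ vanishes identically, which I expect to be the main obstacle. I would use the global inequality~\eqref{eq:surfaceglobalordering} $\sup|s|\le\inf N$. At a chiral point $m$ we have $|s(m)|=N(m)$, so $N(m)=\inf N$ and $|s(m)|=\sup|s|$. Thus $m$ is simultaneously a minimum of $N$ and a maximum of $|s|$, and moreover $N\ge |s(m)|\ge|s|$ everywhere. Now consider the Killing tensor along geodesics from $m$. Every unit geodesic $\gamma$ from $m$ has $K(\dot\gamma,\dot\gamma)=N(m)^2$ constant. On the other hand, $K(\dot\gamma,\dot\gamma)=N^2-16v(\dot\gamma)^2=s^2+16(*v)(\dot\gamma)^2$. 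I want to combine this with $dN=4\mu v$ and $ds=4\alpha *v$. Using $N\ge N(m)$, the identity $N^2-16v(\dot\gamma)^2=N(m)^2$ gives $4|v(\dot\gamma)|=\sqrt{N^2-N(m)^2}$. Using $|s|\le N(m)$, the identity $s^2+16(*v)(\dot\gamma)^2=N(m)^2$ gives $4|*v(\dot\gamma)|=\sqrt{N(m)^2-s^2}$. Adding the squares and using $16|v|^2=N^2-s^2$ is consistent, so the real content lies in the directions. The radial geodesics from $m$ thus have a prescribed angle with $v$, and I would show this forces either $N\equiv N(m)$ (so $dN=0$, i.e.\ $\mu\equiv0$ on $M^\circ$, hence everywhere by density) or $s\equiv s(m)$ (so $\alpha\equiv0$). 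Concretely, suppose $N>N(m)$ somewhere and $|s|<N(m)$ somewhere. In the local normal form of Theorem~\ref{thm:surfacelocalnormalform}, $N=N(x)$ and $s=s(y)$. The function $N$ being constant on $y$-lines and $s$ constant on $x$-lines near a point where both are nonconstant would contradict the radial geodesic picture, because the level sets of $N$ would have to be geodesic circles around $m$ while simultaneously being level sets of $s$. This is the delicate step. The first thing I would try is to show that both $N$ and $s$ are functions of $d_g(m,\cdot)$, which would contradict their being functions of orthogonal coordinates unless one of them is constant.

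Once, say, $\mu\equiv0$ or $\alpha\equiv0$, the linearization at $m$ is a pure rotation (respectively a pure dilation) for one of the fields. I would take the field that is Killing, namely $(*v)^\sharp$ when $\mu\equiv0$ by~\eqref{eq:Lie-dd-v}, or $v^\sharp$ when $\alpha\equiv0$. It is then a complete Killing field vanishing at $m$ with nondegenerate rotational linearization, so its orbits near $m$ are closed circles. After normalizing the period to $2\pi$, the standard geodesic polar coordinates give~\eqref{eq:surfacepolarmetric}, with $h$ odd and $h'(0)=1$ at the zero. The rotationally symmetric metric extends to all of $M^\circ$ by completeness, using radial geodesics from $m$. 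The radial coordinate $r$ runs either to $\infty$, giving $M\cong\mathbb R^2$ with one zero, or to a first $L$ where $h(L)=0$, giving a second fixed point and $M\cong S^2$. A third zero is impossible because the orbits of the rotation near a fixed point fill a disk, and a surface of revolution can contain at most two fixed points. Hence $M^\circ\cong S^1\times(0,\infty)$ or $S^1\times(0,L)$, which is a cylinder.
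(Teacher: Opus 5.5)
There is a genuine gap at the step you flag as delicate. The proposal never proves that $\alpha\equiv0$ or $\mu\equiv0$, and the mechanism you suggest for it is circular.

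The identities $16v(\dot\gamma)^2=N^2-N(m)^2$ and $16(*v)(\dot\gamma)^2=N(m)^2-s^2$ along unit geodesics from $m$ are correct. However, they hold equally in both polar families of Theorem~\ref{thm:main-zeros}, so they cannot by themselves single out one family, let alone exclude mixing.

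Your intermediate goal, that $N$ and $s$ are functions of $d_g(m,\cdot)$, already encodes the conclusion. In a normal ball, where $\nabla r=\dot\gamma$, one has $|\dd N(\nabla r)|=4|\mu|\,|v(\dot\gamma)|$ and $|\dd N|=4|\mu|\,|v|$. So at a point with $\mu\neq0$, $N$ is radial iff $(*v)(\nabla r)=0$, iff $|s|=N(m)$ there. Proving that $N$ is radial is therefore the same as proving $s\equiv s(m)$ on the support of $\mu$, which is what you want to show.

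Moreover, your contradiction hypothesis only bites ``near a point where both are nonconstant'', and no such point need exist. Disjoint-support mixing ($\alpha\mu\equiv0$ with $\alpha,\mu\not\equiv0$) does occur locally, as in Example~\ref{ex:alpha.mu=0-2}. The merely local normal form of Theorem~\ref{thm:surfacelocalnormalform} cannot rule this out.

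The missing ingredient is the global structure of the regular locus from Proposition~\ref{prop:surfaceregularorbit}. On $\widetilde{M^\circ}$ there are global coordinates $(x,y)\in I\times\mathbb R$ with $N=N(x)$ and $s=s(y)$, and $\Lambda$ acts by translations. The paper uses this in two steps.
\begin{itemize}
\item \emph{Counting ends.} $M^\circ$ is a plane, cylinder, or torus, so $c$ has at most two zeros. Having at least one zero excludes the torus. A plane is excluded because continuity at the single added point would make both $N$ and $s$ constant, contradicting $v=0$ there.
\item \emph{Cylinder case.} Let $(a_0,b_0)$ generate $\Lambda$. If $a_0\neq0$, every vertical line escapes into the ends, so continuity at a filled end forces $N$ constant and hence $\mu\equiv0$. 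If $b_0\neq0$, the same argument with horizontal lines forces $\alpha\equiv0$. Because the coordinates are global, this vanishing is global.
\end{itemize}

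Once that is in place, your circle-action argument for the topology and for~\eqref{eq:surfacepolarmetric} is fine, with two corrections. First, you have swapped the fields: by~\eqref{eq:Lie-dd-v}, $v^\sharp$ is Killing when $\mu\equiv0$ and $(*v)^\sharp$ is Killing when $\alpha\equiv0$, as your own linearizations show. Second, the nondegeneracy at $m$ that you hoped to extract from $K$ is automatic. A non-trivial Killing field $Z$ vanishing at $m$ has $\nabla^g Z(m)\neq0$, since a Killing field is determined by its $1$-jet.
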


\begin{proof}
Each point of the closed discrete zero locus of $c$ has a punctured disk neighborhood and determines a distinct end of $M^\circ$. By Proposition~\ref{prop:surfaceregularorbit}, the regular locus $M^{\circ}$ is topologically a plane, a cylinder, or a torus, with one, two, or zero ends, respectively. Thus, there are at most two zeros and, by the hypothesis, at least one zero, which rules out that $M^\circ$ is topologically a torus.\medskip

First, we prove that $M^\circ$ cannot be diffeomorphic to a plane. If the regular locus were a plane, then $M$ is the one-point compactification of $M^\circ$. In the coordinates $(x,y)$ on this plane, fix $x$ and let $y$ tend to infinity. The resulting points converge to the added point, so continuity implies $N$ is constant. Fixing $y$ and letting $x$ tend to an end point of $I$ similarly makes $s$ constant. Then $16|v|^2=N^2-s^2$ would be a positive constant on the regular locus, contrary to $v=0$ at the added point. We conclude that the regular orbit is a cylinder, and filling one or two ends gives the asserted plane or sphere.\medskip

Since $M^\circ$ is a cylinder, in $(x,y)$-coordinates, the deck transformation group $\Lambda$ is generated by an element $(a_0,b_0)\in \mathbb R^2$. A transverse coordinate on the cylinder is $b_0x-a_0y$. If $a_0\neq0$, then $I=\mathbb R$ and every vertical line escapes to either end. Continuity at a filled end forces $N$ to be constant. Since $\dd N=4\mu v$ in~\eqref{eq:surfacerealsystem} and $v\neq 0$ on the regular locus, $\mu=0$ there and hence everywhere. If $b_0\neq0$, horizontal lines similarly force $s$ constant and $\alpha=0$. Both coordinates of the generator cannot be non-zero, because that would make both invariants constant and prevent $|v|$ from tending to zero.\medskip

If $\alpha=0$, by $2 \mathscr{L}_{(*v)^\sharp}g=- \alpha s g$ in~\eqref{eq:Lie-dd-v}, the field $4(*v)^\sharp$ is Killing and periodic. If $\mu=0$, by $2 \mathscr{L}_{v^\sharp}g = \mu N g$ in~\eqref{eq:Lie-dd-v}, $N$ is constant and $4v^\sharp$ is Killing and periodic. Normalize the resulting effective circle action to have period $2\pi$. It is free on the regular cylinder. At a fixed point, its differential has rotation weight $1$ or $-1$: a higher weight would yield a nonidentity isometry fixing that point with identity derivative, which is impossible on a connected Riemannian manifold. Orthogonal geodesics to the orbits give a global radial coordinate on the orbit interval and the metric~\eqref{eq:surfacepolarmetric}. Smoothness at a fixed point gives the stated parity and derivative conditions. In the one-zero case, completeness forces the other radial end to have infinite geodesic length. In the two-zero case, the interval is $[0,L]$ after adding its endpoints.
\end{proof}

By the previous proposition, the regular locus $M^{\circ}$ of a complete Riemannian surface equipped with a generalized Killing spinor that is chiral at some point is necessarily a cylinder equipped with a rotationally symmetric Riemannian metric. We proceed to establish the polar normal forms of the square of the generalized Killing spinor and its associated real triple.

\begin{thm} 
\label{thm:main-zeros}
Let $(M,g)$ be a complete Riemannian surface and let $(N,s,v)$ be a real triple with $|v| = 0$ somewhere. Then the following two are equivalent:
\begin{enumerate}[label=\textup{(\roman*)}]
    \item There exists a non-parallel generalized Killing spinor with real triple $(N,s,v)$.
    \item There exists a constant $a\neq0$ such that on $M^\circ$ there are positively oriented cylindrical coordinates $(r,t)$ as in~\eqref{eq:surfacepolarmetric} such that one of the following two conditions holds:
    \begin{equation}
    \label{eq:surfacepolar}
    \dd s=0,\quad s\neq0,\quad 4v= ah\dd r \qquad\text{or}\qquad\dd N=0,\quad 4v= ah^2\dd t.
    \end{equation}
\end{enumerate}

Furthermore, if the Killing function is $i\mu-\alpha$ and the complex-bilinear square is $\rho=c+\theta$, then, for a unit section $\mathfrak l$ on the complex Hermitian line bundle, we have:
\begin{equation}
\label{eq:surfacepolar-connection}
\begin{gathered}
\mu=\frac{a\partial_r(h)}{N},\quad \alpha=0,\\
2 \rho=(ah+N\dd r+i s h\dd t)\otimes\mathfrak l\\
\nabla^A\mathfrak l=i\beta\otimes\mathfrak l,\quad \beta=\frac{s\partial_r(h)}{N}\dd t
\end{gathered}\qquad\text{or}\qquad
\begin{gathered}
\mu=0,\quad \alpha=-\frac{\partial_r(s)}{ah},\\
2\rho=\bigl(ah-i s\dd r+Nh\dd t\bigr)\otimes\mathfrak l,\\
\nabla^A\mathfrak l=i\beta\otimes\mathfrak l,\qquad
\beta=\frac{N\alpha}{a}\dd t.
\end{gathered}
\end{equation}
\end{thm}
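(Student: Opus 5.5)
The plan is to use Proposition~\ref{prop:surfacefilledends} as the starting point and split into its two cases, $\alpha\equiv0$ and $\mu\equiv0$. Polar coordinates $(r,t)$ on $M^\circ$ will be built from the Killing circle action found there. In each case I will read off the real triple from the system \eqref{eq:surfacerealsystem}--\eqref{eq:surfaceconformalfields}, and then recover $\rho$ and $A$ as in the proof of Theorem~\ref{thm:surfacelocalnormalform}. For the converse, I will check \eqref{eq:surfacecovariant} directly on $M^\circ$ and extend across the zeros of $c$ using the smoothness statement of Proposition~\ref{prop:surfacefilledends}.

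\emph{Case $\alpha\equiv0$.} By \eqref{eq:surfacerealsystem}, $\dd s=0$, so $s$ is constant. It is nonzero: at a zero of $c$ we have $s=\pm N$ with $N>0$. By \eqref{eq:Lie-dd-v}, $4(*v)^\sharp$ is Killing, with closed orbits, and $\dd(*v)\neq0$ unless $\mu N\equiv 0$. Since $v$ is closed ($2\dd v=\alpha s\,\mathrm{vol}_g=0$) and orthogonal to the orbits, it is a multiple of $\dd r$ with coefficient depending only on $r$. Writing $4(*v)^\sharp=a\,\partial_t$ for a constant $a$ gives $4v=ah\,\dd r$, up to orientation. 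The constant $a$ is nonzero because the spinor is non-parallel. Then $\dd N=4\mu v$ gives $\partial_rN=\mu ah$. To get $\mu=a\partial_r h/N$, I will use $16|v|^2=N^2-s^2$, i.e.\ $a^2h^2=N^2-s^2$; differentiating yields $a^2hh'=NN'=N\mu ah$. The formula for $2\rho$ comes from $(\theta,c)_\cL=Nv+is*v$ and $|c|=2|v|=|a|h/2$, using the unit section $\mathfrak l=c/|c|$ (sign of $a$ absorbed). Finally, $\beta$ is the imaginary part of $\nabla^Ac=(\mu+i\alpha)\theta$ written in this frame, which gives $\beta=s\partial_rh/N\,\dd t$.

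\emph{Case $\mu\equiv0$.} Now $N$ is constant and $4v^\sharp$ is Killing, so $4v^\sharp=a\partial_t$ and $4v=ah^2\dd t$. Then $*v$ is a multiple of $\dd r$, namely $4*v=-ah\,\dd r$ with the orientation conventions. From $\dd s=4\alpha*v=-\alpha ah\,\dd r$ we get $\alpha=-\partial_rs/(ah)$. The same frame computation gives the right-hand column of \eqref{eq:surfacepolar-connection}.

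For the converse, given $h$, $a$ and $N$ (or $s$) with the algebraic constraint $a^2h^2=N^2-s^2$ forced by the real triple, I will define $c,\theta,A$ by \eqref{eq:surfacepolar-connection} on $M^\circ$ and verify \eqref{eq:surfacecovariant} as in Theorem~\ref{thm:surfacelocalnormalform}. Alternatively, I can reduce to that theorem via the substitution $\dd x=\dd r/(ah)$, $\dd y=\dd t/a$, which puts the metric into the form \eqref{eq:surfacelocalmetric}. \textbf{The main obstacle} is showing that $c,\theta,A$ extend smoothly across the zeros of $c$. In polar coordinates $\dd t$ and $h\,\dd r$ are singular there, and the $\cL$-frame $\mathfrak l=c/|c|$ has winding there. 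I would handle this by twisting $\mathfrak l$ with $e^{\pm it}$, i.e.\ changing the gauge of $A$ by $\pm\dd t$, and then checking, using the oddness of $h$ and $h'(0)=1$, that $\theta$ becomes a smooth isotropic form $\propto(\dd r\pm ih\,\dd t)$. The condition $s=\pm N$ at the pole fixes the sign.
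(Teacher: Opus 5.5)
Your proposal is correct and takes essentially the paper's route. You take the cylinder and the dichotomy $\alpha\equiv0$ or $\mu\equiv0$ from Proposition~\ref{prop:surfacefilledends}, write $4(*v)^\sharp$ or $4v^\sharp$ as $a\partial_t$, and read off $\mu,\alpha,\rho,\beta$ from \eqref{eq:surfacerealsystem}, $(\theta,c)_{\cL}=Nv+is*v$ and $a^2h^2=N^2-s^2$. For the converse you reduce to Theorem~\ref{thm:surfacelocalnormalform} and extend over the zeros by the gauge twist $\mathfrak l=e^{i\varepsilon t}\mathfrak l_0$ with $\varepsilon=\operatorname{sign}s(0)$, exactly as in Proposition~\ref{prop:reconstruccion-zeros}.

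Two small corrections. First, $\theta$ is isotropic and proportional to $\dd r\pm ih\,\dd t$ only at the pole, since $\langle\theta,\theta\rangle=c\otimes c\neq0$ elsewhere. In the twisted frame it carries an extra $z_\varepsilon^2\dd\overline{z}_\varepsilon/r^2$ term, and you must check that its coefficient is smooth; this holds because $\varepsilon s(0)=N(0)$ and $h/r\to1$. Second, you should also check that the reconstructed spinor is non-parallel, for instance via $\mu(0)=a/|s|\neq0$ or $\alpha(0)=a/s(0)\neq0$.
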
 

\begin{proof}
We first prove (\rm{i}) implies (\rm{ii}). Proposition~\ref{prop:surfacefilledends} gives a regular cylinder with oriented polar metric:
\begin{equation*}
g=\dd r\otimes\dd r+h^2\dd t\otimes\dd t,\qquad
\mathrm{vol}_g =h\dd r\wedge\dd t,\qquad
\ast\dd r=h\dd t,\qquad \ast(h\dd t)=-\dd r,
\end{equation*}

and proves that either $\alpha\equiv0$ or $\mu\equiv0$. These alternatives cannot both hold, by the non-parallel hypothesis. The positively oriented orthonormal coframe $e^1=\dd r$, $e^2=h\dd t$ satisfies:
\begin{equation*}
\nabla^g e^1=\partial_r(h)\dd t\otimes e^2,\qquad
\nabla^g e^2=-\partial_r(h)\dd t\otimes e^1.
\end{equation*}

In particular:
\begin{equation*}
\nabla^g(h\dd r)=\partial_r(h)g,\qquad
\nabla^g(h^2\dd t)=\partial_r(h)\mathrm{vol}_g.
\end{equation*}

If $\alpha\equiv0$ or $\mu\equiv0$, the first two equations in~\eqref{eq:surfacerealsystem} give $\dd s = 0$ or $\dd N =0$, respectively. The effective circle action used in Proposition~\ref{prop:surfacefilledends} is generated by a non-zero constant multiple of $4(\ast v)^\sharp$ or $4 v^\sharp$, respectively. Thus, after choosing its angular parameter of period $2\pi$, there is a constant $a\neq0$ with:
\begin{equation*}
4(\ast v)^\sharp=a\partial_t,\quad 4v=ah\dd r \qquad\text{or}\qquad
4v^\sharp = a\partial_t, \quad 4v= ah^2 \dd t,
\end{equation*}

respectively, which proves (\rm{i}) implies (\rm{ii}). Moreover, we compute the norm of $v$ and obtain, in both cases:
\begin{equation}
\label{eq:normas-v-polar}
N^2 - s^2 = 16|v|^2 = a^2 h^2.
\end{equation}

Furthermore, since we are working in the regular locus $M^\circ$, by Theorem~\ref{thm:surfacelocalnormalform}, there exist local coordinates $(x,y)$ and a unit section of the complex Hermitian bundle $\mathfrak l_{(x,y)}$ such that~\eqref{eq:derivadas-direccionales}--\eqref{eq:surfacelocalconnection} are satisfied. Thus, we substitute~\eqref{eq:surfacepolar} in~\eqref{eq:dx-dy}, use~\eqref{eq:normas-v-polar} and obtain, respectively:
\begin{equation}
\label{eq:relacion-dx-dy-dr-dt}
\dd x = \frac{\dd r}{ah}, \quad \dd y = \frac{\dd t}{a} \qquad\text{or}\qquad
\dd x = \frac{\dd t}{a}, \quad \dd y = \frac{- \dd r}{ah}.
\end{equation}

Finally, we set $\mathfrak l:= \mathrm{sign}(a)\mathfrak l_{(x,y)}$, and substitute~\eqref{eq:relacion-dx-dy-dr-dt} in~\eqref{eq:derivadas-direccionales}--\eqref{eq:surfacelocalconnection} to obtain~\eqref{eq:surfacepolar-connection}.\medskip

Now, we prove (\rm{ii}) implies (\rm{i}). Around every point of $M^\circ$ we can construct a local coordinate system $(x,y)$ defined by the corresponding equations in~\eqref{eq:relacion-dx-dy-dr-dt} and, by the definition of a real triple $(N,s,v)$, we have $N^2 - s^2 = 16|v|^2$. Thus, substituting~\eqref{eq:relacion-dx-dy-dr-dt} in~\eqref{eq:surfacepolarmetric} and~\eqref{eq:surfacepolar}, we obtain~\eqref{eq:dx-dy}--\eqref{eq:surfacelocalmetric}. Consequently, by Theorem~\ref{thm:surfacelocalnormalform}, there exists a complex generalized Killing spinor on $M^\circ$ associated with the data in~\eqref{eq:surfacepolar-connection}. Finally, by Proposition~\ref{prop:reconstruccion-zeros}, which is stated and proved below, any solution satisfying~(\rm{ii}) extends smoothly and uniquely to $M$, giving the existence of a generalized Killing spinor satisfying~(\rm{i}).
\end{proof}

\begin{prop}
\label{prop:reconstruccion-zeros}
Let $M^\circ = (0,L)\times\mathbb R/2\pi\mathbb Z$, with $L\in (0,+\infty]$, be a cylinder with metric:
\begin{equation*}
g=\dd r\otimes\dd r+h(r)^2\dd t\otimes\dd t,\qquad(r,t)\in M^\circ,
\end{equation*}

and we suppose that $h>0$ on the open radial interval and that at every included endpoint $h$ has a smooth odd extension in the distance from that endpoint, with derivative one in the direction away from it. We consider a complex generalized Killing spinor whose real triple $(N,s,v)$ satisfies, respectively, the first or second equation in~\eqref{eq:surfacepolar} and~\eqref{eq:surfacepolar-connection}. Then, collapsing the circle at $r=0$ and, when $L<\infty$, at $r=L$, we obtain a complete Riemannian surface $(M,g)$ to which the complex Hermitian line bundle, its connection and the complex-bilinear square of the spinor extend smoothly and uniquely. In particular, $(M,g)$ admits a complex generalized Killing spinor.
\end{prop}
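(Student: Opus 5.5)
Because $g$ is completed by collapsing circles, I will split the argument into a metric part and a bundle part. The metric part is the standard criterion for rotationally symmetric metrics. If $h$ extends to an odd smooth function of the distance $\rho$ from the endpoint with $h'(0)=1$, then near that endpoint $g=\dd\rho^2+h(\rho)^2\dd t^2$ defines a smooth metric on a disk. In polar coordinates $(\rho,t)$ this disk has Cartesian coordinates $X=\rho\cos t$, $Y=\rho\sin t$. The reason is that $h(\rho)^2/\rho^2$ is a smooth even function of $\rho$, hence a smooth function of $\rho^2=X^2+Y^2$. Completeness then follows from Hopf--Rinow. If $L<\infty$ the result is compact (a sphere). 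If $L=\infty$, the metric balls around the collapsed point are the closed disks $\{r\le R\}$, which are compact.

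For the bundle, I will treat the two cases of~\eqref{eq:surfacepolar} separately. On the open cylinder, $\cL$ is trivialized by $\mathfrak l$ with $\nabla^A\mathfrak l=i\beta\otimes\mathfrak l$. Here $\beta=\beta_t(r)\dd t$, with $\beta_t=s\partial_r(h)/N$ in the first case and $\beta_t=N\alpha/a$ in the second. Near a collapsed endpoint, $\dd t$ is singular, so I will twist the frame by $\mathfrak l'=e^{-ikt}\mathfrak l$ with $k\in\Z$, which gives $\beta'=(\beta_t-k)\dd t$. The form $f(\rho)\dd t$ is smooth at the origin exactly when $f$ is a smooth even function of $\rho$ vanishing at $0$, because $\dd t=(X\dd Y-Y\dd X)/\rho^2$. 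So the key claim to check is that $\beta_t(0)\in\Z$, and that $\beta_t$ is even in $\rho$.

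In the first case, I will use the relation $N^2-s^2=a^2h^2$ from~\eqref{eq:normas-v-polar} together with $s$ constant. These give $N=\sqrt{s^2+a^2h^2}$, which is even and smooth, so $\beta_t$ tends to $s\cdot 1/|s|=\pm1$ (with $h'$ even). In the second case $N$ is constant, so $s^2=N^2-a^2h^2$ and $s(0)=\pm N$. Then $\partial_r s=-a^2hh'/s$ forces $\alpha=-\partial_r(s)/(ah)=ah'/s$, and $\beta_t\to\pm1$ again. The sign depends on $\mathrm{sign}(s)$ and on orientation. At $r=L$ the radial direction reverses, which changes the sign of $h'$ but is consistent with the twist. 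Hence $k=\pm1$ works at each end, and $\cL$ extends over $M$ with some degree.

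The main obstacle is then showing that $\rho=c+\theta$ extends smoothly in the twisted frame. In the first case, $2\rho=(ah+N\dd r+ish\dd t)\otimes e^{ikt}\mathfrak l'$. The scalar part $ah\,e^{ikt}$ is smooth because $h\,e^{\pm it}$ is $\rho e^{\pm it}$ times an even function, that is, $(X\pm iY)$ times a smooth function. For the one-form part, I will write $N\dd r+ish\dd t=(N\pm s)\tfrac12(\dd r+ih\dd t)+(N\mp s)\tfrac12(\dd r-ih\dd t)$. I will then use that $e^{\mp it}(\dd\rho\pm i\rho\dd t)=\dd(X\pm iY)\,/\,(X\pm iY)\cdot\rho$-type identities: the term with coefficient $N\mp s$ (which vanishes to second order) absorbs the bad phase, and the other combines with $e^{ikt}$ into $\dd X\pm i\dd Y$ times a smooth function. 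The choice of $k$ is forced by $\mathrm{sign}(s)$, consistent with the previous step, and the second case is analogous after exchanging roles. Once $\rho$ and $A$ are smooth, the equations~\eqref{eq:surfacecovariant} hold on the dense set $M^\circ$ and hence everywhere by continuity. Proposition~\ref{prop:algebraicsquare} and Proposition~\ref{prop:complex_Killing_2d} then produce the spinor. Uniqueness of the extension follows from the density of $M^\circ$ and continuity.
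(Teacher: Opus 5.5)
Your existence argument follows the paper's proof: Cartesian coordinates at the collapsed point, and the twist $\mathfrak l=e^{i\varepsilon t}\mathfrak l_0$ with $\varepsilon=\operatorname{sign}s(0)$, dictated by $\beta_t(0)=\varepsilon$ and the evenness of $\beta_t$. It then splits the one-form part into a $\dd z_\varepsilon$-piece and a piece whose coefficient vanishes to second order, and concludes with continuity for~\eqref{eq:surfacecovariant} and Hopf--Rinow. Some details of the $\theta$-step need fixing.

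\begin{itemize}
\item The identity you quote is false as written. The ones that do the work are $e^{i\varepsilon t}(\dd r+i\varepsilon r\,\dd t)=\dd z_\varepsilon$ and $r^2e^{i\varepsilon t}(\dd r-i\varepsilon r\,\dd t)=z_\varepsilon^2\,\dd\overline{z}_\varepsilon$, with $z_\varepsilon=re^{i\varepsilon t}$.
\item Your $\pm/\mp$ splitting reproduces $N\dd r+ish\,\dd t$ only for the upper sign.
\item Because you split along $\dd r\pm i\varepsilon h\,\dd t$ rather than $\dd r\pm i\varepsilon r\,\dd t$, the good piece is not a multiple of $\dd z_\varepsilon$. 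One has $e^{i\varepsilon t}(\dd r+i\varepsilon h\,\dd t)=\tfrac{h+r}{2r}\dd z_\varepsilon-\tfrac{h-r}{2r^3}z_\varepsilon^2\,\dd\overline z_\varepsilon$. This is smooth because $h-r$ is odd and $O(r^3)$.
\item Correspondingly, the bad piece carries a coefficient of order $1/r^2$. It must be cancelled by $N-|s|=a^2h^2/(N+|s|)$, resp.\ $s-\varepsilon N$. The paper avoids this by splitting with coefficients $(N\pm\varepsilon sh/r)/4$.
\item You never check that $\theta\neq0$ at the collapsed points, which Proposition~\ref{prop:complex_Killing_2d} requires. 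It holds because the $\dd z_\varepsilon$-coefficient tends to $|s|/2$, resp.\ $N/2$.
\end{itemize}

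The genuine gap is the uniqueness clause. Density of $M^\circ$ and continuity only show that two extensions agree at a zero \emph{once} their line bundles are identified over all of $M$. What must be shown is that the given identification over $M^\circ$ extends smoothly across the puncture. Near a zero this identification is a $\U(1)$-valued gauge function on a punctured disk, and a line bundle on a punctured disk has many extensions. Your remark that $k$ is forced only rules out the model gluings $e^{ikt}$, not general gauge functions.

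The paper closes this by choosing a vector field $X$ that is non-zero at the zero. Then $\theta(X)$ is a smooth, nowhere vanishing local section for both extensions. For an arbitrary extension this uses $|\theta(X)|^2=\tfrac{N^2}{4}g(X,X)>0$ at $c=0$, from~\eqref{eq:surfacehermitiantensor}. Hence the complex-linear map sending one evaluation to the other is smooth on the whole disk and restricts to the given isomorphism on the punctured disk.

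An alternative route: the given isomorphism is a nowhere vanishing parallel section of $\cL_1^*\otimes\cL_2$. That connection is smooth on the disk and flat off the center, hence flat, so the section extends. Either way, you need to supply this argument.
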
 

\begin{proof} 
We now extend all quantities associated with a generalized Killing spinor with non-trivial chiral locus from $M^{\circ}$ to $M$. We give the extension at $r=0$ first. The smooth odd extension of $h$, together with $\partial_r(h)(0)=1$, implies that $h(r)/r$ extends as a smooth function of $r^2$ with value one at zero. In oriented Cartesian coordinates $x=r\cos(t)$, $y=r\sin(t)$, the metric is:
\begin{equation*}
g=\dd x\otimes\dd x+\dd y\otimes\dd y+\frac{(h(r)/r)^2-1}{r^2}(x\dd y-y\dd x)\otimes(x\dd y-y\dd x).
\end{equation*}

The scalar coefficient on the second line is a smooth function of $r^2$. This shows the metric is smooth at the collapsed circle, with positive-definite value $\dd x\otimes\dd x+\dd y\otimes\dd y$ at its center.\medskip

In the imaginary family ($\dd s =0$ and $s\neq0$), $N=\sqrt{s^2+a^2h^2}$ is smooth in $r^2$ near the zero, with $N(0)=|s|$, and $\mu=a\partial_r(h)/N$ extends smoothly. In the real family ($\dd N =0$), the relation $s^2 = N^2-a^2h^2$ implies that $s$ has a fixed sign near the zero and extends there as that sign times the positive square root, with $s(0)=\pm N$. Thus $s$ is smooth in $r^2$ there. The formula:
\begin{equation*}
\alpha=\frac{a\partial_r(h)}{s}
\end{equation*}

is valid near the zero and proves smoothness of $\alpha$ there; its use is legitimate because $s$ is non-zero on this neighborhood. In either family put $\varepsilon=\operatorname{sign}s(0)\in\{\pm1\}$. The coefficient of $\dd t$ in $\beta$ is a smooth function of $r^2$ with value $\varepsilon$ at zero. Therefore its difference from $\varepsilon$ is $r^2$ times a smooth function of $r^2$.\medskip

Glue a trivial Hermitian line over the polar disk to the line over the regular cylinder by the unitary transition:
\begin{equation*} 
\mathfrak l=e^{i\varepsilon t}\mathfrak l_0,\qquad
\nabla^A\mathfrak l_0
=i(\beta-\varepsilon\dd t)\otimes\mathfrak l_0.
\end{equation*}

The transition is single-valued because $\varepsilon$ is an integer. The connection in the disk frame $\mathfrak l_0$ is smooth: indeed, $r^2\dd t=x\dd y-y\dd x$ is smooth, and the preceding vanishing of its coefficient removes the apparent angular singularity. Set $z_\varepsilon=x+i\varepsilon y=r e^{i\varepsilon t}$. This is only notation for a smooth Cartesian coordinate, with orientation reversed when $\varepsilon=-1$. In the disk frame the scalar section is:
\begin{equation}
\label{eq:surfacezeroscalar}
c=\frac a2\frac{h(r)}r z_\varepsilon\mathfrak l_0,
\end{equation}

and it is smooth. If $p$ is a zero of $c$, then:
\begin{equation*}
(\nabla^A c)_p=\frac a2(\dd x+i\varepsilon\dd y)_p\otimes(\mathfrak l_0)_p.
\end{equation*}

Since $c(p)=0$, this derivative is independent of the choice of connection. As a real-linear map $T_pM\to\cL_p$ it is an isomorphism, so the zero of $c$ is simple.\medskip

For the one-form component, the identities:
\begin{equation*}
e^{i\varepsilon t}(\dd r+i\varepsilon r\dd t)=\dd z_\varepsilon,\qquad
r^2e^{i\varepsilon t}(\dd r-i\varepsilon r\dd t)
=z_\varepsilon^2\dd\overline{z}_\varepsilon
\end{equation*}

give the following expressions on the punctured disk. In the imaginary family:
\begin{equation*}
\theta=\left(\frac{N+\varepsilon s  h/r}{4}\dd z_\varepsilon+\frac{N-\varepsilon s  h/r}{4r^2}z_\varepsilon^2\dd\overline{z}_\varepsilon\right)\otimes\mathfrak l_0.
\end{equation*}

Both scalar coefficients extend as smooth functions of $r^2$: the numerator of the second vanishes at $r=0$, since $N(0)=\varepsilon s =| s |$ and $h(r)/r\to1$. Thus, its quotient by $r^2$ is smooth. The value of $\theta$ at the zero is $| s |\dd z_\varepsilon\otimes\mathfrak l_0/2$, which is non-zero. In the real family, the corresponding expression is:
\begin{equation*}
\theta=\left(-\frac{i(s+\varepsilon  N  h/r)}4\dd z_\varepsilon-\frac{i(s-\varepsilon  N  h/r)}{4r^2}z_\varepsilon^2\dd\overline{z}_\varepsilon\right)\otimes\mathfrak l_0.
\end{equation*}

Here $s(0)=\varepsilon  N  $, so the same argument proves smoothness, and the value at zero is:
\begin{equation*}
-i\varepsilon  N  \dd z_\varepsilon\otimes\mathfrak l_0/2\neq0.
\end{equation*}

Thus all line-valued tensors extend smoothly, and $\theta$ remains nowhere vanishing. Since both sides of each equation in~\eqref{eq:surfacecovariant} are smooth and agree off the zero, they agree at the zero as well.\medskip

If a second zero is present, use the positively oriented polar coordinates $(L-r,-t)$ near it. Replacing $a$ and $\mathfrak l$ by $-a$ and $-\mathfrak l$ leaves $c+\theta$, the connection, and the coefficient functions unchanged when expressed in the new coordinates. The same construction therefore applies at this zero, with $\varepsilon$ now equal to the sign of its limiting value of $s$. Take the two polar disks disjoint. Their unitary transition maps with the regular cylinder define a global Hermitian line bundle and the global square and connection just constructed.\medskip

For a solution originally given on $M$, the preceding reconstruction agrees with it on $M^\circ$ under the unitary isomorphism determined by $\mathfrak l=2c/(ah)$. This isomorphism also extends over each zero of $c$. To see this without assuming any regularity of the singular cylinder frame, choose a smooth real vector field $X$ non-zero at that zero. The constructed one-form evaluates non-trivially on $X$ there, by the displayed Cartesian formulas. The original one-form does so as well: at $c=0$,~\eqref{eq:surfacehermitiantensor} gives:
\begin{equation*}
|\theta(X)|_{\cL}^2=\frac{N^2}{4}g(X,X)>0.
\end{equation*}

After shrinking the disk, these evaluations are nowhere vanishing sections of the two line bundles. The unique complex-linear map sending the constructed evaluation to the original one defines a smooth line-bundle isomorphism on the disk. On the punctured disk it is the already prescribed isomorphism. By continuity, it is unitary at the center, and it identifies $c,\theta$ and their connections there because the corresponding smooth tensors agree on the punctured disk. This also proves uniqueness, and applies to any two smooth extensions of the same regular data.\medskip

The sphere metric is complete by compactness. For the plane, the extended radial function has range $[0,\infty)$, and the polar metric implies that the length of any piecewise smooth curve is at least the absolute variation of its radial coordinate. Consequently, a closed metric ball about the zero is contained in a set $\{r\leq b\}$ for some finite $b$. This set is compact: it is the image of $[0,b]\times S^1$ with the circle at zero collapsed. The closed ball is therefore compact, and the Hopf--Rinow theorem proves completeness. At $r=0$, the non-zero coefficients are respectively:
\begin{equation*}
\mu(0)=\frac{a}{| s |}\neq0,\qquad
\alpha(0)=\frac{a}{s(0)}\neq0.
\end{equation*}

The same conclusion holds at a second zero after the coordinate change above. Thus every polar reconstruction is non-parallel. Proposition~\ref{prop:complex_Killing_2d} then gives the stated spin-c existence interpretation, without prescribing a spin-c lift in advance. 
\end{proof} 

\begin{cor}
Every complete Riemannian surface admitting a complex generalized Killing spinor with $i\mu-\alpha\not\equiv0$ is diffeomorphic to $\mathbb R^2$, $S^1\times\mathbb R$, $T^2$, or $S^2$.
\end{cor}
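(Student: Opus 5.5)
The corollary follows by splitting into two cases according to whether the section $c$ of the square $\rho=c+\theta$ has a zero, after first disposing of a small subtlety about the meaning of $i\mu-\alpha\not\equiv0$.

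\emph{Case 1: $c$ vanishes somewhere.} Proposition~\ref{prop:surfacefilledends} applies directly. It says that $M$ is diffeomorphic to $\mathbb R^2$ (one zero of $c$) or to $S^2$ (two zeros of $c$).

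\emph{Case 2: $c$ is nowhere zero.} Then $M^\circ=M$, and $M$ is complete. Proposition~\ref{prop:surfaceregularorbit} identifies the universal cover $\widetilde M$ with a domain $I\times\mathbb R\subset\mathbb R^2$, which is diffeomorphic to $\mathbb R^2$. The deck group $\Lambda$ acts by translations in the $(x,y)$ coordinates and forms a discrete subgroup of $(\mathbb R^2,+)$. So $\Lambda\cong\Z^k$ with $k\in\{0,1,2\}$, generated by linearly independent translation vectors.
\begin{itemize}
\item If $k=0$, then $M\cong\mathbb R^2$.
\item If $k=1$, the quotient of $I\times\mathbb R$ by a single translation is diffeomorphic to $S^1\times\mathbb R$. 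This holds both when $I=\mathbb R$ and when the translation is vertical, which the proposition forces if $I\neq\mathbb R$.
\item If $k=2$, the translations span $\mathbb R^2$. Invariance of $I\times\mathbb R$ under them forces $I=\mathbb R$, and then $M\cong T^2$.
\end{itemize}

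The step needing most care is the $k=1$ case when $I\neq\mathbb R$. Here one must check that $(I\times\mathbb R)/\Z$ is a cylinder and not some other surface. The proposition guarantees that $\Lambda$ is then vertical, acting as $(x,y)\mapsto(x,y+nb_0)$, so the quotient is $I\times S^1\cong\mathbb R\times S^1$. For $I=\mathbb R$, any nonzero translation of $\mathbb R^2$ gives a quotient diffeomorphic to a cylinder by a linear change of coordinates.

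The subtlety is the standing hypothesis. Everything above uses Lemma~\ref{lem:surface_zero_set}, which needs $i\mu-\alpha\not\equiv0$ to ensure that the zeros of $c$ are isolated and that $M^\circ$ is open, dense and connected. The corollary assumes exactly this, so both cases exhaust the possibilities and the list $\mathbb R^2$, $S^1\times\mathbb R$, $T^2$, $S^2$ is complete.
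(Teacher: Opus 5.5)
Your proof is correct and follows essentially the same route as the paper. You use the same split: if $c$ has a zero, Proposition~\ref{prop:surfacefilledends} gives $\mathbb R^2$ or $S^2$; if $c$ is nowhere zero, you quotient $I\times\mathbb R$ by a discrete translation group of rank $0$, $1$ or $2$, where $I\neq\mathbb R$ forces vertical translations (rank at most one), yielding a plane, a cylinder, or, only when $I=\mathbb R$, a torus.
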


\begin{proof}
In the regular case, a discrete translation subgroup has rank zero, one, or two. If $I\neq\mathbb R$, Proposition~\ref{prop:surfaceregularorbit} implies its elements are vertical, so the quotient is a plane or a cylinder. If $I=\mathbb R$, the three possible ranks give a plane, a cylinder, or a torus. The polar case gives a plane or a sphere by Proposition~\ref{prop:surfacefilledends}.
\end{proof}
 
\begin{remark}
The previous construction in Proposition~\ref{prop:reconstruccion-zeros} determines the topology of $\cL$. Give its fibers their complex orientation and $M$ its prescribed orientation. In the oriented Cartesian coordinates used in the proof,~\eqref{eq:surfacezeroscalar} has local complex coefficient $a h(r)e^{i\varepsilon t}/2$. On a sufficiently small positively oriented circle around the zero, its normalization is:
\begin{equation*}
t\mapsto\frac{a}{|a|}e^{i\varepsilon t}.
\end{equation*}

Its winding number is $\varepsilon$. Hence the local index of the section at every zero $p$ is:
\begin{equation*}
\operatorname{ind}_p(c)=\operatorname{sign}s(p).
\end{equation*}

At a second zero, the same calculation uses the oriented coordinates $(L-r,-t)$; replacing $a$ with $-a$ changes only the constant unit factor, not the index. On a sphere, the first Chern number of a complex line bundle is the sum of the indices of the isolated zeros of a section. Since these two zeros are all the zeros of $c$, it follows that:
\begin{equation*}
\deg\cL:=\langle c_1(\cL),[M]\rangle
=\operatorname{sign}s(p_0)+\operatorname{sign}s(p_1)
\in\{-2,0,2\}.
\end{equation*}

In the imaginary family the two signs agree, so $\deg\cL=2\operatorname{sign} s $. In the real family, the degree is zero precisely when the values of $s$ at the zeros have opposite signs. In particular, all these degrees are even, as required by $c_1(\cL)\bmod2=w_2(TM)=0$ for a characteristic line on the sphere. On the plane, every complex line bundle is trivial because the base is contractible. In the regular case, the nowhere vanishing section $c$ trivializes $\cL$, so the characteristic-line condition is satisfied there as well. Triviality of the line bundle does not imply flatness of the specified unitary connection.
\end{remark}

\begin{cor}
\label{cor:surface_complete_separation}
For a generalized Killing spinor with $i\mu-\alpha\not\equiv0$ on a complete Riemannian surface, we have:
\begin{equation*}
\alpha\mu\equiv0\quad\Longleftrightarrow\quad
\alpha\equiv0\ \text{or}\ \mu\equiv0.
\end{equation*}

If neither coefficient vanishes identically, then $c$ is nowhere zero, $\cL$ is trivial, and both one-variable functions $N,s$ in the regular normal form are non-constant.
\end{cor}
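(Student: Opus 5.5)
The plan is to split into the chiral and non-chiral cases, as the classification above already does.

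\emph{Case 1: $c$ has a zero.} Proposition~\ref{prop:surfacefilledends} (equivalently Theorem~\ref{thm:main-zeros}) shows directly that either $\alpha\equiv0$ or $\mu\equiv0$. So both the equivalence and the second assertion hold trivially, the latter vacuously.

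\emph{Case 2: $c$ is nowhere zero.} Then $M=M^\circ$ and $c$ trivializes $\cL$. By Proposition~\ref{prop:surfaceregularorbit} we may lift to global coordinates $(x,y)\in I\times\mathbb R$ on the universal cover, where $N=N(x)$, $s=s(y)$ and, by \eqref{eq:derivadas-direccionales},
\[
(N^2-s^2)\mu=N'(x),\qquad (N^2-s^2)\alpha=s'(y).
\]
Since $N^2-s^2>0$, we get $\mu(x,y)=0$ exactly when $N'(x)=0$, and $\alpha(x,y)=0$ exactly when $s'(y)=0$. So the zero set of $\mu$ is $Z_N\times\mathbb R$ and that of $\alpha$ is $I\times Z_s$, with $Z_N=\{N'=0\}\subset I$ and $Z_s=\{s'=0\}\subset\mathbb R$. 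Suppose $\alpha\mu\equiv0$. Then every point $(x,y)$ lies in $Z_N\times\mathbb R$ or in $I\times Z_s$. If some $x_0\notin Z_N$, then every $y\in\mathbb R$ lies in $Z_s$, so $s'\equiv0$ and $\alpha\equiv0$ on the cover, hence on $M$. Otherwise $Z_N=I$ and $\mu\equiv0$. This is the nontrivial direction; the converse is obvious.

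For the last assertion, assume neither coefficient vanishes identically. By Case 1, $c$ is nowhere zero and $\cL$ is trivial. By the formulas above, $\mu\not\equiv0$ forces $N'\not\equiv0$ and $\alpha\not\equiv0$ forces $s'\not\equiv0$, so both $N$ and $s$ are non-constant. The only step needing care is that the product structure of the zero sets holds globally. This is exactly what the global coordinates of Proposition~\ref{prop:surfaceregularorbit} give, which is where completeness enters. Without completeness, the coordinate patches need not be products, and Example~\ref{ex:alpha.mu=0-2} shows that the conclusion then fails.
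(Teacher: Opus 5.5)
Your proposal is correct and is essentially the paper's proof. It uses the same split according to whether $c$ has a zero, Proposition~\ref{prop:surfacefilledends} in the chiral case, and the global relations $(N^2-s^2)\mu=\partial_x(N)$, $(N^2-s^2)\alpha=\partial_y(s)$ on the coordinates of Proposition~\ref{prop:surfaceregularorbit} in the regular case. Your explicit argument that $\partial_x(N)\,\partial_y(s)\equiv0$ forces one factor to vanish identically just spells out a step the paper states in one line (note that completeness also enters in your Case~1, through Proposition~\ref{prop:surfacefilledends}, not only through the global coordinates).
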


\begin{proof}
The reverse implication is immediate. Suppose that $\alpha\mu\equiv0$. If $c$ has a zero, Proposition~\ref{prop:surfacefilledends} implies that either $\alpha\equiv0$ or $\mu\equiv0$. Assume therefore that $c$ is nowhere vanishing. By Theorem~\ref{thm:surfacelocalnormalform} and Proposition~\ref{prop:surfaceregularorbit}, on the global regular normal form we have:
\begin{equation*}
(N^2-s^2)\mu= \partial_x(N),\qquad (N^2-s^2)\alpha= \partial_y(s).
\end{equation*}

Since $N^2-s^2>0$, the condition $\alpha\mu\equiv0$ gives $\partial_x(N) \partial_y(s) = 0$ for every $x,y$. Hence either $\partial_x(N)\equiv0$ or $\partial_y(s)\equiv0$, and therefore either $\mu\equiv0$ or $\alpha\equiv0$.\medskip

Finally, if neither $\alpha$ nor $\mu$ vanishes identically, the first part shows that $c$ cannot have zeros. Thus $c$ trivializes $\cL$, and the equations above imply that both $N$ and $s$ are non-constant.
\end{proof}

\begin{remark}\label{rem:surface_mixing_completeness}
Completeness does \emph{not} imply $\alpha\mu\equiv0$. It excludes only disjoint-support mixing, meaning $\alpha\mu\equiv0$ with neither coefficient identically zero. The examples include complete regular solutions with $\alpha\mu>0$ everywhere. Topological triviality of the characteristic line does not imply flatness of its connection.
\end{remark}

\begin{cor}
\label{cor:surfacecompactclassification}
An oriented compact Riemannian surface admits a generalized Killing spinor with $i\mu-\alpha\not\equiv0$ if and only if one of the following holds:
\begin{enumerate}[label=\textup{(\roman*)}]
\item It is a rotationally symmetric sphere. In particular, $\alpha\equiv0$ or $\mu\equiv0$.
\item It is a torus with a global presentation $(M,g)=(\R^2/\Lambda,g)$ with:
\begin{equation*}
\pi^*g=(\widetilde{N}(x)^2-\widetilde{s}(y)^2)(\dd x\otimes\dd x+\dd y\otimes\dd y),
\end{equation*}
where $\pi$ is the quotient projection, $\Lambda$ is a full lattice, and $\widetilde{N}$, $\widetilde{s}$ are two $\Lambda$-invariant functions with $\widetilde{N}>|\widetilde{s}|$, and at least one of $\widetilde{N}$, $\widetilde{s}$ is non-constant.
\end{enumerate}

Furthermore, in {\rm(i)}, every such solution has two simple zeros of $c$ and belongs to one of the families in Theorem~\ref{thm:main-zeros}. In {\rm(ii)}, $c$ is nowhere zero and $\cL$ is trivial.
\end{cor}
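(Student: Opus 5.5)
The plan is to reduce the statement to the complete classification already obtained: Theorems~\ref{thm:3.13} and~\ref{thm:main-zeros} together with Proposition~\ref{prop:surfacefilledends}. A compact surface is complete, so every solution with $i\mu-\alpha\not\equiv0$ falls into one of two cases: either $c$ has a zero, or $c$ is nowhere zero. The argument splits along these two cases, and for each I treat the forward implication (classification) and the converse (existence).

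\emph{Case where $c$ has a zero.} Proposition~\ref{prop:surfacefilledends} gives $M\cong\mathbb R^2$ or $M\cong S^2$. Compactness rules out the plane, so $M$ is a sphere with exactly two zeros of $c$. The metric on $M^\circ$ is the polar metric~\eqref{eq:surfacepolarmetric}, so $M$ is rotationally symmetric, and either $\alpha\equiv0$ or $\mu\equiv0$. Both zeros are simple by the computation of $(\nabla^Ac)_p$ in the proof of Proposition~\ref{prop:reconstruccion-zeros}, and Theorem~\ref{thm:main-zeros} places the solution in one of its two polar families. For the converse, let $g=\dd r^2+h(r)^2\dd t^2$ be any rotationally symmetric sphere with $0<r<L$. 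Take the real family $\dd N=0$ with $N$ a constant exceeding $|a|\max h$. Define $s$ by $s^2=N^2-a^2h^2$, with a fixed sign, and set $4v=ah^2\dd t$. Then~\eqref{eq:surfacepolar} holds, and Proposition~\ref{prop:reconstruccion-zeros} extends the solution smoothly over both poles. This gives a non-parallel solution, since $\alpha(0)\neq0$.

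\emph{Case where $c$ is nowhere zero.} Here $M^\circ=M$, and Proposition~\ref{prop:surfaceregularorbit} shows that $\widetilde M\cong I\times\mathbb R$ with $\Lambda$ acting by translations. Since $M$ is compact and $I\times\mathbb R$ is contractible, $\Lambda$ must have rank two. A rank-two translation group cannot be purely vertical, so the proposition forces $I=\mathbb R$, and $\Lambda$ is then a full lattice in $\mathbb R^2$. Consequently $M$ is a torus. The metric has the stated form by~\eqref{eq:surfacelocalmetric}, and~\eqref{eq:dN-ds} gives $\widetilde N=\widetilde N(x)$ and $\widetilde s=\widetilde s(y)$. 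Both functions are $\Lambda$-invariant because they are lifts. If both were constant, then~\eqref{eq:derivadas-direccionales} would give $\mu=\alpha=0$, contradicting $i\mu-\alpha\not\equiv0$. The line bundle $\cL$ is trivialized by $c$. For the converse, take such data and set $4\widetilde v=(\widetilde N^2-\widetilde s^2)\dd x$. This is $\Lambda$-invariant since $\Lambda$ acts by translations, so the last part of Theorem~\ref{thm:3.13} gives a spinor on $\mathbb R^2/\Lambda$. The Killing function is not identically zero because one of $\partial_x\widetilde N$, $\partial_y\widetilde s$ is nonzero.

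The main point needing care is the exclusion of $I\neq\mathbb R$ in the regular compact case. I plan to argue it topologically. If $I\neq\mathbb R$, then Proposition~\ref{prop:surfaceregularorbit} makes every element of $\Lambda$ vertical, so $\Lambda$ has rank at most one. The quotient is then a plane or a cylinder, which is noncompact, contradicting the compactness of $M$.
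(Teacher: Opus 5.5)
Your proposal is correct and follows essentially the route the paper intends. The paper states this corollary without a separate proof, as a direct consequence of Proposition~\ref{prop:surfacefilledends}, Theorems~\ref{thm:3.13} and~\ref{thm:main-zeros}, Proposition~\ref{prop:reconstruccion-zeros} and the preceding corollary on topological types; your case split according to whether $c$ vanishes, your compactness argument excluding $I\neq\mathbb R$ and translation groups of rank below two, and your explicit existence constructions ($N$ constant with $N>|a|\max h$ in the real polar family, and $4\widetilde v=(\widetilde N^2-\widetilde s^2)\,\dd x$ on the torus) are exactly that derivation written out.
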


Consequently, no closed surface of genus at least two admits a solution with $i\mu-\alpha\not\equiv0$. Genuine mixing of $\mu$ and $\alpha$ on a closed surface occurs only in the toroidal family.

\begin{cor}
\label{cor:surfacetorusconsequences}
Every toroidal solution in Corollary~\ref{cor:surfacecompactclassification} has a point where $\mu + i\alpha=0$. 
\end{cor}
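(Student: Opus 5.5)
On the torus $c$ is nowhere zero. We work in the global regular normal form on $\R^2$ given by Corollary~\ref{cor:surfacecompactclassification}(ii) and Proposition~\ref{prop:surfaceregularorbit}. There the metric is $\pi^*g=(\widetilde N(x)^2-\widetilde s(y)^2)(\dd x\otimes\dd x+\dd y\otimes\dd y)$, and $\Lambda$ is a full lattice of translations. By~\eqref{eq:derivadas-direccionales}, $(N^2-s^2)\mu=\partial_x N$ and $(N^2-s^2)\alpha=\partial_y s$. Since $N^2-s^2>0$, the condition $\mu+i\alpha=0$ at a point is equivalent to $N'(x)=0$ and $s'(y)=0$ simultaneously. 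Because $\widetilde N$ depends only on $x$ and $\widetilde s$ only on $y$, it suffices to find a critical point $x_0$ of $\widetilde N$ and a critical point $y_0$ of $\widetilde s$. The point $(x_0,y_0)$ then projects to a point of $M$ where $\mu+i\alpha=0$.

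The key step is periodicity. Let $\Lambda$ be generated by $(a_1,b_1)$ and $(a_2,b_2)$, linearly independent. Invariance $\widetilde N(x+a_j)=\widetilde N(x)$ holds for $j=1,2$, and the independence of the generators forces some $a_j\neq0$. Hence $\widetilde N$ is a periodic smooth function of one real variable with a nonzero period. It therefore attains its maximum on $\R$, and its derivative vanishes there. The same argument with some $b_j\neq0$ shows that $\widetilde s$ is periodic and has a critical point $y_0$.

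There are two subtleties. First, the claim that $\widetilde N$ is genuinely a function of $x$ alone on all of $\R^2$ must be justified. This follows from $\dd N\wedge\dd x=0$ in~\eqref{eq:dN-ds} together with the fact that the coordinates are global on $\R^2$, which is connected with connected horizontal and vertical lines. The symmetric statement gives $\widetilde s=\widetilde s(y)$. Second, the lattice invariance must be expressed in the $(x,y)$ coordinates; this is exactly Proposition~\ref{prop:surfaceregularorbit}, where $I=\R$ is forced in the compact case. I expect the only real obstacle to be bookkeeping these identifications. The analytic content is simply the existence of critical points of periodic functions. As a remark, we would obtain at least four such points counted on the torus, coming from the maxima and minima of $N$ and $s$, although only one is needed.
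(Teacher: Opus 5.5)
Your proof is correct, but it follows a different route from the paper's. The paper argues intrinsically on $M$. Since $c$, and hence $v$, is nowhere zero on the torus, compactness gives a maximum point $p$ of the single function $N+s$. Then $0=\dd(N+s)_p=4\mu(p)v_p+4\alpha(p)(*v)_p$, and because $v_p$ and $(*v)_p$ are non-zero and orthogonal, this forces $\mu(p)=\alpha(p)=0$. That argument uses only the first two equations of~\eqref{eq:surfacerealsystem} and compactness. It needs none of the global machinery you invoke: the complete commuting fields, the translation action of $\Lambda$ on the universal cover, or $I=\R$.

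You instead pass through the global normal form of Proposition~\ref{prop:surfaceregularorbit} and Corollary~\ref{cor:surfacecompactclassification}(ii). There, the separation of variables $\widetilde N=\widetilde N(x)$, $\widetilde s=\widetilde s(y)$ is what lets you combine a critical point of $N$ and a critical point of $s$ into one point. In the paper, maximizing $N+s$ plays exactly this role. A maximum of $N$ alone would only give $\mu=0$, and one of $s$ alone only $\alpha=0$, possibly at different points.

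Your route costs more input. In exchange, it yields a product description of the zero set of the Killing function as the image of $\mathrm{Crit}(\widetilde N)\times\mathrm{Crit}(\widetilde s)$ in $\R^2/\Lambda$, which makes your multiplicity remark transparent. The paper's argument characterizes the same set only as $\mathrm{Crit}(N+s)$. Your handling of the two subtleties is fine: $\partial_y\widetilde N=0$ on connected vertical lines, and a full lattice forces $I=\R$ together with some $a_j\neq0$ and some $b_j\neq0$.
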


\begin{proof}
For a toroidal solution, $c$ is nowhere zero. Since $4|v|^2=|c|^2$, it follows that $v$ is nowhere vanishing. By compactness, the function $N+s$ attains a maximum at some point $p\in M$. The real differential system gives:
\begin{equation*}
0=\dd(N+s)_p=4\mu(p)v_p+4\alpha(p)(*v)_p.
\end{equation*}

Since the one-forms $v_p$ and $(*v)_p$ are non-zero and orthogonal, they are
linearly independent. Hence, $\mu(p)=\alpha(p)=0$.
\end{proof}


\subsection{Explicit examples} 


Because of Proposition~\ref{prop:complex_Killing_2d}, we have that $\eta\in \Gamma(S)$ is a complex generalized spin Killing spinor with complex Killing function $i\mu-\alpha$ and $\rho = c + \theta$ is its complex-bilinear square if and only if:
\begin{equation}\label{eq:spin-Kill}
\dd c (X) = (i\alpha+\mu)\theta(X), \qquad \nabla^g _X \theta = (i\alpha + \mu)c X^{\flat}.
\end{equation}

\begin{prop}\label{prop:spin-plano}
Let $M\subset\C$ be a connected open set endowed with the Euclidean metric. Then there exists a complex generalized spin Killing spinor $\eta\in\Gamma(S)$ with complex Killing function $i\mu-\alpha\not\equiv0$ such that $\rho = c + \theta$ is its complex-bilinear square if and only if there exist two functions $F(z):=-az+b_1$ and $G(z):=a\overline z+b_2$ for some constants $a$, $b_1$, $b_2\in\mathbb{C}$ with $a\neq0$ such that:
\begin{equation*} 
\theta=i(F-G)\dd x+(F+G)\dd y, \qquad c^2=4\,FG, \qquad(i\alpha + \mu) c = -2 i \,a.
\end{equation*}
\end{prop}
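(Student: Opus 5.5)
We work with the spin version \eqref{eq:spin-Kill} of Proposition~\ref{prop:complex_Killing_2d}: $\cL$ is trivial, $A$ is trivial, $c$ is a complex function, $\theta=\theta_1\dd x+\theta_2\dd y$ has complex coefficients, and $\nabla^g=\dd$ in Euclidean coordinates. The idea is to change variables to the null combinations
\begin{equation*}
F:=\tfrac12(\theta_2-i\theta_1),\qquad G:=\tfrac12(\theta_2+i\theta_1).
\end{equation*}
Inverting gives $\theta_2=F+G$ and $\theta_1=i(F-G)$, which is exactly the displayed form of $\theta$. The algebraic constraint $\theta_1^2+\theta_2^2=c^2$ then becomes $4FG=c^2$.

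For the forward direction, set $\lambda:=\mu+i\alpha$. The second equation of \eqref{eq:spin-Kill} reads $\partial_x\theta_1=\lambda c=\partial_y\theta_2$ and $\partial_y\theta_1=0=\partial_x\theta_2$.
\begin{itemize}
\item From $\partial_y\theta_1=0$ and $\partial_x\theta_2=0$ we get $\theta_1=\theta_1(x)$ and $\theta_2=\theta_2(y)$.
\item Then $\theta_1'(x)=\theta_2'(y)$ forces both derivatives to equal a constant $\kappa$.
\item Hence $\lambda c=\kappa$ is constant, and $\theta_1=\kappa x+p$, $\theta_2=\kappa y+q$.
\end{itemize}
Substituting into $F$ and $G$, and writing $z=x+iy$, $\bar z=x-iy$, gives
\begin{equation*}
F=\tfrac12(\kappa y-i\kappa x)+\mathrm{const}=-\tfrac{i\kappa}{2}z+b_1,\qquad G=\tfrac{i\kappa}{2}\bar z+b_2.
\end{equation*}
With $a:=i\kappa/2$ this is $F=-az+b_1$, $G=a\bar z+b_2$, and $\lambda c=\kappa=-2ia$, as claimed.

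The first equation $\dd c=\lambda\theta$ is not yet used. It is automatic on the open set where $c\neq0$: differentiating $c^2=\theta_1^2+\theta_2^2$ gives $2c\,\partial_x c=2\theta_1\kappa=2\theta_1\lambda c$, so $\partial_x c=\lambda\theta_1$ there, and similarly for $y$.

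It remains to show $a\neq0$. If $a=0$, then $\lambda c\equiv0$. Since $\lambda\not\equiv0$, $c$ vanishes on an open set, and there $\dd c=\lambda\theta$ with $\theta\neq0$ forces $\lambda=0$. The function $c$ satisfies $c^2=4FG$ with $F,G$ constant, so $c$ is a constant. If $c\neq0$ then $\lambda\equiv0$; if $c=0$ then $\lambda\theta=\dd c=0$ forces $\lambda=0$ because $\theta$ is nowhere zero. Both contradict $i\mu-\alpha\not\equiv0$, so $a\neq0$.

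For the converse, given $F$, $G$, $a\neq0$ and a function $c$ with $c^2=4FG$ and $\lambda c=-2ia$, we define $\theta$ by the formula.
\begin{itemize}
\item Since $\lambda c$ is a nonzero constant, $c$ is nowhere zero and smooth, with $\lambda=-2ia/c$.
\item We compute $\theta_1=i(F-G)=-ia(z+\bar z)+\mathrm{const}=-2iax+\mathrm{const}$, and likewise $\theta_2=-2iay+\mathrm{const}$.
\item So $\partial_x\theta_1=\partial_y\theta_2=-2ia=\lambda c$ and the mixed derivatives vanish, which is the second equation of \eqref{eq:spin-Kill}.
\item $\theta_1^2+\theta_2^2=4FG=c^2$ holds identically; differentiating it and dividing by $c\neq0$ gives $\dd c=\lambda\theta$.
\item $\theta$ is nowhere zero, since $\theta=0$ would force $c=0$.
\end{itemize}
Proposition~\ref{prop:algebraicsquare} and Proposition~\ref{prop:complex_Killing_2d}, in the spin case, then produce the spinor. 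The functions $\mu,\alpha$ are the real and imaginary parts of $\lambda$, and they are not both identically zero.

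The main obstacle is likely the case analysis for $a\neq0$ together with the locus where $c$ vanishes. The chiral points are where $F$ or $G$ vanishes, and there the identity $\dd c=\lambda\theta$ must be checked by density and continuity rather than by dividing by $c$. This care is needed in the forward direction, where $c$ may vanish at isolated points by Lemma~\ref{lem:surface_zero_set}.
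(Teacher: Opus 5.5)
Your proof is correct and follows essentially the paper's route. You use the same null combinations $F,G$, the same contradiction argument for $a\neq0$, and the same converse obtained by differentiating $c^2=4FG$. The only difference is that you get constancy of $(\mu+i\alpha)c$ from the mixed partials of $\theta_1,\theta_2$, whereas the paper observes that it is both holomorphic and antiholomorphic. Your step should be read locally and then globalized by connectedness of $M$, since $\partial_y\theta_1=0$ alone does not give $\theta_1=\theta_1(x)$ on a domain with disconnected vertical slices. Your closing concern about chiral points is moot: $(\mu+i\alpha)c=-2ia\neq0$ forces $c$ to be nowhere zero in both directions.
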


\begin{proof}
We can write $\theta = u \dd x + v \dd y$ and the second equation in~\eqref{eq:spin-Kill}, applied to $X= \frac{\partial}{\partial x}$ and $X= \frac{\partial}{\partial y}$, respectively, becomes:
\begin{equation*}
\frac{\partial u}{\partial x} \dd x + \frac{\partial v}{\partial x} \dd y = (i\alpha + \mu)c \dd x,\qquad \frac{\partial u}{\partial y} \dd x + \frac{\partial v}{\partial y} \dd y = (i\alpha + \mu)c \dd y.
\end{equation*}

In other words, $\dd u = (i\alpha + \mu)c \, \dd x$, and $\dd v = (i\alpha + \mu)c \, \dd y$. We define:
\begin{equation*}
F:=\frac{v-iu}{2}, \qquad G:=\frac{v+iu}{2}.
\end{equation*}

Differentiating, we obtain that:
\begin{equation*}
\dd F = - \frac{i(i\alpha + \mu)c}{2} \dd z, \qquad \dd G = \frac{i(i\alpha + \mu)c}{2} \dd \overline{z}.
\end{equation*}

Thus, $F$ is holomorphic, $G$ is anti-holomorphic, and:
\begin{equation*}
-\frac{\partial F}{\partial x} = \frac{\partial G}{\partial x} = \frac{i(i\alpha + \mu)c}{2}, \qquad \frac{\partial F}{\partial y} = \frac{\partial G}{\partial y} = \frac{(i\alpha + \mu)c}{2}.
\end{equation*}

Thus, $\frac{i(i\alpha + \mu)c}{2}$ is holomorphic and anti-holomorphic and, consequently, it must be equal to a constant $a\in\mathbb{C}$. Moreover, $a\neq0$. Indeed, if $a=0$, then $(i\alpha+\mu)c=0$. Since $i\alpha+\mu\not\equiv0$, the open set $U:=\{p\in M\mid (i\alpha+\mu)(p)\neq0\}$ is nonempty. On $U$ we have $c=0$, hence $\dd c=0$. The first equation in~\eqref{eq:spin-Kill} then gives $0=\dd c=(i\alpha+\mu)\theta$ on $U$, contradicting the fact that $\theta$ is nowhere vanishing. Finally, there exist two constants $b_1$ and $b_2\in\mathbb{C}$ such that:
\begin{equation*}
F(z) = -az + b_1, \qquad G(z) = a \overline z + b_2.
\end{equation*}

Moreover:
\begin{equation*}
c^2=\escal{\theta,\theta}=- (F-G)^2+(F+G)^2=4FG.
\end{equation*}

Conversely, since $a\neq0$ and $(i\alpha + \mu) c = -2 i \,a$, both $c$ and $i\alpha+\mu$ are nowhere vanishing. We differentiate $c^2 = 4 \,FG$ and obtain:
\begin{align*}
\dd c &= \frac{2}{c} ( F \dd G +  G \dd F) \\
    &= \frac{2}{c} \left(F\frac{i(i\alpha + \mu)c}{2} \dd \overline{z} - G \frac{i(i\alpha + \mu)c}{2} \dd z  \right)\\
    &= i(i\alpha + \mu) \left(F \dd x - F i\dd y - G\dd x -Gi\dd y \right) \\
    &= (i\alpha + \mu) \left(i(F-G) \dd x + (F +G) \dd y\right) \\
    &= (i\alpha + \mu) \theta,
\end{align*}

which is the first identity in Equation~\eqref{eq:spin-Kill}. Analogously, taking the $\nabla^g$-derivative of $\theta$, we obtain:
\begin{align*}
\nabla^g\theta &= i (\dd F - \dd G)\otimes \dd x + (\dd F + \dd G)\otimes \dd y \\
    &= i \left(- \frac{i(i\alpha + \mu)c}{2}\right) (\dd z + \dd \overline{z})\otimes \dd x + \left(- \frac{i(i\alpha + \mu)c}{2}\right) (\dd z - \dd \overline{z}) \otimes \dd y \\
    &= (i\alpha + \mu)c g,
\end{align*}

which is the second identity in Equation~\eqref{eq:spin-Kill}. Together with $\escal{\theta,\theta}=c^2$, Proposition~\ref{prop:complex_Killing_2d} gives the required generalized spin Killing spinor.
\end{proof}

\begin{example}\label{ex:spin-counterexample}
Let $M= \{z=x+iy\in \mathbb{C}\mid |z|< \tfrac{1}{2}\}$ and equip $M$ with the Euclidean metric $g$ and the standard spin structure with complex spinor bundle $S$. Let $\sqrt{z}$ be the principal branch of the square root on $\mathbb{C} \setminus\{x\leq0,\,y=0\}$. We define:
\begin{equation*}
F(z) := 1-z, \qquad G(z) :=  \overline z + i,\qquad c := 2\sqrt{1-z}\sqrt{\overline{z}+i}.
\end{equation*}

Therefore, by Proposition~\ref{prop:spin-plano}, there exists a complex generalized spin Killing spinor with complex Killing function:
\begin{equation*}
-\alpha(z) + i\mu(z) = \frac{1}{\sqrt{1-z}\sqrt{\overline{z}+i}}.
\end{equation*}

We now evaluate at $z=0$ and obtain:
\begin{equation*}
-\alpha(0)+i\mu(0)=\frac{1}{\sqrt{i}}=\frac{1-i}{\sqrt{2}}
\end{equation*}

and $\alpha(0)\mu(0)=\frac{1}{2}\neq0$. This example is a proper complex generalized spin Killing spinor with complex Killing function $i\mu-\alpha$ with $\alpha\mu\not\equiv 0$ and, therefore, contradicts~\cite[Lemma~3.4]{Grosse_Nakad_2015} in dimension two.
\end{example}

\begin{example}\label{ex:nowhere-vanishing}
Let us consider the following oriented and connected two-dimensional manifold:
\begin{equation*}
M:=\{(x,y)\in\R^2\mid y<x\}\subset\R^2.
\end{equation*}

Take $t:=e^{y-x}$, so $0<t<1$, and define $\phi:=\operatorname{arctanh}(t)$, so that:
\begin{equation*}
\cosh(\phi)=\frac{1}{\sqrt{1-t^2}},\qquad \sinh(\phi)=\frac{t}{\sqrt{1-t^2}}.
\end{equation*}

Set:
\begin{equation*}
e^1:=\sqrt{1-t^2}\dd x,\qquad e^2:=\frac{\sqrt{1-t^2}}{t}\dd y
\end{equation*}

and equip $M$ with the Riemannian metric $g:=e^1\otimes e^1+e^2\otimes e^2$. Finally, define:
\begin{equation*}
\alpha:=\frac{t^2}{1-t^2},\qquad \mu:=\frac{1}{1-t^2}.
\end{equation*}

In particular, they satisfy that $\alpha>0$ and $\mu>0$ on $M$. We can explicitly check all the conditions in Proposition~\ref{prop:complex_Killing_2d_frame}. First of all:
\begin{equation*}
\mu\cosh(\phi)e^1-\alpha\sinh(\phi)e^2=\frac{1}{1-t^2}(\dd x-t^2\dd y)=\dd(x+\tfrac{1}{2}\log(1-t^2)),
\end{equation*}

where we have used that $\dd t=t(\dd y-\dd x)$. Hence, the above one-form is exact. Now, using that $e^1\wedge e^2=\frac{1-t^2}{t}\dd x\wedge \dd y$ we obtain:
\begin{align*}
\dd e^1&=\frac{t^2}{\sqrt{1-t^2}}\dd x\wedge \dd y=\alpha\sinh(\phi)e^1\wedge e^2,\\
\dd e^2&=\frac{1}{t\sqrt{1-t^2}}\dd x\wedge\dd y=\mu\cosh(\phi)e^1\wedge e^2.
\end{align*}

Finally, the function $\phi=\operatorname{arctanh}(t)$ satisfies:
\begin{equation*}
\dd\phi=\frac{1}{1-t^2}\dd t=\frac{t}{1-t^2}(\dd y-\dd x)=-\mu\sinh(\phi)e^1+\alpha\cosh(\phi)e^2.
\end{equation*}

Therefore, we have constructed an example of a Riemannian surface carrying a generalized spin-c Killing spinor with complex Killing function $i\mu-\alpha$ for which $\alpha\mu>0$.
\end{example}

\begin{example}\label{ex:alpha.mu=0-2}
Let us consider the following oriented and connected two-dimensional manifold:
\begin{equation*}
M:=\{(x,y)\in\R^2\mid x>0\text{ or }y<0\}\subset\R^2.
\end{equation*}

Choose non-constant smooth functions $F,G\colon\R\to[0,\infty)$ such that $F(x)=0$ for $x\geq0$ and $G(y)=0$ for $y\leq0$. Now fix a positive constant $\phi_0>0$ and define:
\begin{align*}
\phi(x,y)&:=\phi_0+F(x)+G(y)>0,\\
f(x,y)&:=-\log\cosh(\phi_0+F(x))-\log\sinh(\phi_0+G(y)).
\end{align*}

Set:
\begin{equation*}
e^1:=\frac{1}{\cosh(\phi)}\dd y,\qquad e^2:=-\frac{1}{\sinh(\phi)}\dd x
\end{equation*}

and equip $M$ with the Riemannian metric $g=e^1\otimes e^1+e^2\otimes e^2$. Finally, define:
\begin{equation*}
\alpha:=-F'(x)\tanh(\phi),\qquad \mu:=-\frac{G'(y)}{\tanh(\phi)}.
\end{equation*}

If $F'(x)\neq0$, then $x<0$. But on $M$, $x<0$ implies that $y<0$, thus $G'(y)=0$. Similarly, if $G'(y)\neq0$, then $y>0$, which implies that $x>0$, thus $F'(x)=0$. Therefore $\alpha\mu\equiv0$ everywhere on $M$, while $\alpha\not\equiv0$ and $\mu\not\equiv0$. A direct computation shows that all the equations in Proposition~\ref{prop:complex_Killing_2d_frame} are satisfied. Therefore, we have constructed an example of a Riemannian surface carrying a generalized spin-c Killing spinor with complex Killing function $i\mu-\alpha$ for which $\alpha\mu\equiv0$ but neither $\alpha$ nor $\mu$ is identically zero.
\end{example}

\begin{example}\label{example:complete_2d}
Consider $\R^2$ and choose constants $a,b>0$ and $C>a+b$. Define:
\begin{equation*}
A(y):=a\tanh(y),\qquad B(x):=C+b\tanh(x),\qquad t(x,y):=e^{A(y)-B(x)}
\end{equation*}

and:
\begin{equation*}
f(x,y):=B(x)+\frac{1}{2}\log(1-t(x,y)^2).
\end{equation*}

Since $-a<A(y)<a$ and $C-b<B(x)<C+b$, we have $0<m\leq t(x,y)\leq M<1$ for some constants $m,M$. In particular, the function $\phi:=\operatorname{arctanh}(t)>0$ is globally defined. Set:
\begin{equation*}
e^1:=\sqrt{1-t^2}\dd x,\qquad e^2:=\frac{\sqrt{1-t^2}}{t}\dd y
\end{equation*}

and equip $\R^2$ with the Riemannian metric $g:=e^1\otimes e^1+e^2\otimes e^2$. Finally, define:
\begin{equation*}
\alpha:=A'(y)\frac{t^2}{1-t^2},\qquad \mu:=B'(x)\frac{1}{1-t^2}.
\end{equation*}

Since $A'(y)=\frac{a}{\cosh^2(y)}>0$ and $B'(x)=\frac{b}{\cosh^2(x)}>0$, we have that $\alpha>0$, $\mu>0$, and $\alpha\mu>0$. A direct computation shows that all the equations in Proposition~\ref{prop:complex_Killing_2d_frame} are satisfied. We now show completeness of the metric $g$. Since:
\begin{equation*}
g=e^1\otimes e^1+e^2\otimes e^2=(1-t^2)\dd x\otimes\dd x+\frac{1-t^2}{t^2}\dd y\otimes\dd y
\end{equation*}

and $0<m\leq t\leq M<1$ we have:
\begin{equation*}
0<1-M^2\leq1-t^2\leq1-m^2<1,\qquad 0<\frac{1-M^2}{M^2}\leq\frac{1-t^2}{t^2}\leq\frac{1-m^2}{m^2}<\infty.
\end{equation*}

From these inequalities we conclude that there exists a real constant $K\geq1$ such that:
\begin{equation*}
\frac{1}{K}g_0\leq g\leq Kg_0,
\end{equation*}

where $g_0:=\dd x\otimes\dd x+\dd y\otimes\dd y$ is the Euclidean metric on $\R^2$. Hence, the metric $g$ is uniformly equivalent to the Euclidean metric $g_0$, which is complete. Therefore, the metric $g$ is also complete.
\end{example}

\begin{example}\label{ex:compact}
Let us consider the same data as in Example~\ref{example:complete_2d} but taking instead:
\begin{equation*}
A(y):=a\sin(y),\qquad B(x):=C+b\sin(x),
\end{equation*}

which are $2\pi$-periodic functions. Then the whole construction descends to the torus $T^2=\R^2/(2\pi\Z)^2$, giving us an example of a compact Riemannian surface carrying a generalized spin-c Killing spinor with complex Killing function $i\mu-\alpha$ such that:
\begin{equation*}
\alpha\mu=ab\cos(x)\cos(y)\frac{t^2}{(1-t^2)^2}.
\end{equation*}

Note that this expression is non-zero on an open dense set and takes both signs. At points where $\cos(x)=\cos(y)=0$, both coefficients vanish, in agreement with Corollary~\ref{cor:surfacetorusconsequences}.
\end{example}


\section{Generalized Killing spinors and trans-Sasakian three-manifolds}
\label{sec:complex_Killing}


In this section, we consider complex generalized spin-c Killing spinors on three-manifolds and establish an equivalence between manifolds carrying such spinors and $\mu$-exact trans-Sasakian manifolds of type $(\alpha,\mu)$, analogously to \cite{Sasakian_spin-c_2026,Kenmotsu_spin-c_2026}. We also provide a counterexample to the Gro\ss e--Nakad conjecture and prove that the conjecture holds when the characteristic vector field of the trans-Sasakian structure is complete.\medskip
   
First, we characterize the complex-bilinear square of a complex generalized spin-c Killing spinor in this dimension.

\begin{prop}
Let $(M,g)$ be a three-dimensional Riemannian manifold and let $\mathcal{L}$ be a complex line bundle. An $\mathcal{L}$-valued complex exterior form $\rho\in\Omega_\C(M,\mathcal{L})$ is the complex-bilinear square of a nowhere vanishing irreducible complex spinor $\eta\in\Gamma(S)$ for a certain bundle of irreducible complex spinors $S$ on $(M,g)$ if and only if $\rho\in\Omega^1_\C(M,\mathcal{L})$ is nowhere vanishing and isotropic.
\end{prop}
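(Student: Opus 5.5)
The plan is to mirror the proof of Proposition~\ref{prop:algebraicsquare}: apply the pointwise algebraic characterization of squares of irreducible complex spinors from \cite[Thm.~2.7]{Complex_Differential_Spinors_2026}, now with the admissible pairing $\scB$ skew-symmetric of negative adjoint type ($\delta_1=\delta_2=-1$, \cite[Table~7]{Algebraic_Complex_Square_2025}), and then globalize by twisting with $\cL$. In dimension three, with $\gamma(i\,\mathrm{vol}_g)=-\Id$, the Clifford algebra acts through its image, and in this representation a $k$-form and its Hodge dual act proportionally. We therefore identify the square with an element of $\wedge^0\oplus\wedge^1$, modulo this duality. The symmetry conditions for $\delta_1=\delta_2=-1$ then select which degrees survive. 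The square $\eta\otimes\eta$ of a spinor is symmetric, whereas $\scB$ is skew. Hence only the degrees $k$ with $\scB(\beta\cdot\eta_1,\eta_2)$ symmetric contribute: these are the one-forms (equivalently, two-forms), and not the scalars. So the first step is to show that the linear condition $\tau(\rho)=\rho$ forces $\rho\in\Omega^1_\C(M,\cL)$.

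Next I write $\rho=\theta$ and analyse the quadratic condition $\rho\diamond\rho=2\rho^{(0)}\rho$. Since $\rho^{(0)}=0$, the right-hand side vanishes, and the condition becomes $\theta\diamond\theta=0$. In three dimensions, for a one-form this gives $\theta\diamond\theta=\langle\theta,\theta\rangle$, because the wedge $\theta\wedge\theta$ vanishes. So the condition is exactly isotropy, $\langle\theta,\theta\rangle=0$. I will double-check the dictionary between the Clifford product of forms and the representation, using the fixed choice $\gamma(i\,\mathrm{vol}_g)=-\Id$, to make sure no scalar component is generated through $\mathrm{vol}_g$.

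For the remaining condition I choose $\beta=\overline{\theta}$. Then
\begin{equation*}
(\theta\diamond\overline{\theta})^{(0)}=|\theta|^2>0
\end{equation*}
wherever $\theta\neq0$. One then checks that $\theta\diamond\overline{\theta}\diamond\theta=2|\theta|^2\theta$ follows from isotropy. Expanding the product, the terms involving $\langle\theta,\theta\rangle$ drop out and leave $2|\theta|^2\theta$, up to sign conventions that I must track.

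Nonvanishing of $\rho$ corresponds to nonvanishing of $\eta$, since the square map is injective up to sign. For the converse, the theorem gives pointwise a spinor, unique up to sign, with square $\theta$. The globalization step is the same as in dimension two. The local spinors determined by $\theta$ patch together up to signs and $\cL$-twists, which yields a bundle of irreducible complex spinors $S$ with characteristic line $\cL$ (more precisely, with $\cL$ as the square of the associated line) and a global spinor $\eta$ with $\scB(\eta,\eta)=\rho$. I expect this globalization, namely producing the spin-c structure from $\cL$ and $\theta$, to be the main obstacle. I would handle it by citing the corresponding globalization statement in \cite{Complex_Differential_Spinors_2026}, exactly as was done in Proposition~\ref{prop:algebraicsquare}.
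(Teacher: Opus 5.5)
Your proposal is essentially the paper's argument. The paper quotes the three-dimensional pointwise computation \cite[Cor.~5.5]{Algebraic_Complex_Square_2025} for the skew-symmetric pairing of negative adjoint type and then invokes the globalization of \cite{Complex_Differential_Spinors_2026}, whereas you rederive that pointwise statement from \cite[Thm.~2.7]{Complex_Differential_Spinors_2026} in the style of Proposition~\ref{prop:algebraicsquare}; your checks $\theta\diamond\theta=\langle\theta,\theta\rangle$, $(\theta\diamond\overline{\theta})^{(0)}=|\theta|^2$ and $\theta\diamond\overline{\theta}\diamond\theta=2|\theta|^2\theta-\langle\theta,\theta\rangle\overline{\theta}$ are correct. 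One correction: for $\delta_1=\delta_2=-1$ the linear condition is not $\tau(\rho)=\rho$, which is vacuous on $\wedge^0\oplus\wedge^1$ and cannot remove the scalar part, but $(\pi\circ\tau)(\rho)=-\rho$ with $\pi$ the parity automorphism (i.e.\ $\gamma(\rho)$ is $\scB$-skew-adjoint), which is exactly what your degree-by-degree symmetry count of $\scB(\beta\cdot\eta,\eta)$ gives.
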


\begin{proof}
The complex-bilinear square of $\eta$ was computed pointwise in~\cite[Cor.\ 5.5]{Algebraic_Complex_Square_2025} using a skew-symmetric admissible complex-bilinear pairing of negative adjoint type. The globalization to an $\mathcal{L}$-valued complex exterior form, as explained in~\cite{Complex_Differential_Spinors_2026}, yields the result of the statement.
\end{proof}

\begin{prop}\label{prop:complex_Killing}
A three-dimensional spin-c Riemannian manifold $(M,g)$ admits a complex generalized spin-c Killing spinor $\eta\in\Gamma(S)$ with complex Killing function $i\mu-\alpha$, where $\alpha$, $\mu\in C^\infty(M,\R)$, if and only if:
\begin{equation*}
\nabla^{g,A}_X\rho=(i\mu-\alpha)*(X^\flat\wedge\rho)
\end{equation*}

for all $X\in\Gamma(TM)$, where $\rho$ denotes the complex-bilinear square of $\eta$.
\end{prop}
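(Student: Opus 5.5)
The plan mirrors Proposition~\ref{prop:complex_Killing_2d}. We invoke the general square-to-spinor correspondence of \cite[Lemma~5.3 and Cor.~5.7]{Complex_Differential_Spinors_2026}. It says that a nowhere vanishing irreducible complex spinor $\eta$ satisfies $\nabla^{g,A}_X\eta=\frac{i\alpha+\mu}{2}X\cdot\eta$ if and only if its complex-bilinear square $\rho$ satisfies the Leibniz-type equation
\begin{equation*}
\nabla^{g,A}_X\rho=\frac{\mu+i\alpha}{2}\bigl(X^\flat\diamond\rho+\delta\,\rho\diamond X^\flat\bigr),
\end{equation*}
where $\delta$ is a sign determined by the type $(\delta_1,\delta_2)=(-1,-1)$ of the skew-symmetric pairing of negative adjoint type fixed in dimension three. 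The right-hand side arises from differentiating $\eta\otimes\eta$. The first summand comes from the left factor. The second comes from the right factor, after transporting the Clifford action through $\scB(\eta,\theta\cdot\eta)=\delta_2\scB(\theta\cdot\eta,\eta)$. Since $\eta$ is nowhere vanishing and $M$ is connected, this correspondence is available, and the converse direction recovers $\eta$ from $\rho$ up to the usual sign.

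The work is then to reduce the right-hand side to the stated form, using the preceding proposition that $\rho\in\Omega^1_\C(M,\cL)$ is an isotropic one-form. For one-forms, the geometric product is $X^\flat\diamond\rho=X^\flat\wedge\rho+\langle X^\flat,\rho\rangle$ and $\rho\diamond X^\flat=\rho\wedge X^\flat+\langle X^\flat,\rho\rangle$. With the sign $\delta=-1$ coming from $\delta_2=-1$, the symmetric scalar parts cancel and the antisymmetric parts add. This leaves $(\mu+i\alpha)X^\flat\wedge\rho$, which is an $\cL$-valued two-form.

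A two-form cannot equal the derivative of a one-form, so the two-form must be identified with a one-form through the quotient by the volume relation. In odd dimension, $\Omega_\C(M)$ acts on $S$ through the quotient $\wedge^{\mathrm{ev}}$ or $\wedge^{\mathrm{odd}}$, and the choice $\gamma(i\,\mathrm{vol}_g)=-\Id$ fixes the identification. Concretely, in the Kähler--Atiyah algebra with positive signature, $\omega\diamond\mathrm{vol}_g=\pm\ast\omega$ for two-forms in dimension three, and $\mathrm{vol}_g$ acts as $i$ up to sign. Thus $X^\flat\wedge\rho\equiv c\,\ast(X^\flat\wedge\rho)$ for a fixed $c\in\{\pm i\}$. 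Substituting gives $\nabla^{g,A}_X\rho=c(\mu+i\alpha)\ast(X^\flat\wedge\rho)$, and with $c=i$ this is exactly $(i\mu-\alpha)\ast(X^\flat\wedge\rho)$.

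The main obstacle is this sign and normalization bookkeeping. The needed facts are the precise value of $\delta$ for negative adjoint type, the sign relating two-forms and one-forms under $\gamma(i\,\mathrm{vol}_g)=-\Id$, and the exact value of $c$. To control these, I would check everything on a flat local model, namely $\R^3$ with an explicit constant spinor and its isotropic square from \cite[Cor.~5.5]{Algebraic_Complex_Square_2025}. Comparing both sides there confirms the factor $i\mu-\alpha$ rather than its conjugate or its negative.

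For the converse, assume $\rho$ is a nowhere vanishing isotropic $\cL$-valued one-form satisfying the stated equation. By the preceding proposition, $\rho$ is the square of a nowhere vanishing irreducible spinor $\eta$. Running the identification backwards shows that $\rho$ satisfies the Leibniz-type equation, and \cite[Cor.~5.7]{Complex_Differential_Spinors_2026} then yields that $\eta$ is Killing with the same connection $A$.
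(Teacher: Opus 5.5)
Your proposal is correct and follows the paper's route. The paper cites the odd-dimensional version \cite[Thm.~5.10]{Complex_Differential_Spinors_2026}, stated on the truncated K\"ahler--Atiyah bundle $\C\oplus T^*_\C M$ with product $\rho_1\vee\rho_2=\mathcal{P}_<(\rho_1\diamond\rho_2-i*\tau(\rho_1\diamond\rho_2))$; this product is exactly your by-hand quotient and gives $X^\flat\vee\rho-\rho\vee X^\flat=2i*(X^\flat\wedge\rho)$ directly. Your constant is $c=i$ without any flat-model check: $\gamma(i\,\mathrm{vol}_g)=-\Id$ gives $\gamma(\mathrm{vol}_g)=i\,\Id$, and $\omega\diamond\mathrm{vol}_g=-*\omega$ for two-forms, so $\gamma(\omega)=i\,\gamma(*\omega)$.
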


\begin{proof}
Recall that we have equipped the bundle of irreducible complex spinors $S$ with a skew-symmetric admissible bilinear pairing $\scB$ of negative adjoint type. As explained in~\cite{LBC13,LBC16}, and further elaborated in~\cite{Algebraic_Complex_Square_2025}, the truncated Kähler-Atiyah bundle is defined on $\wedge^<T^*_\C M=\C\oplus T^*_\C M$ endowed with the product:
\begin{equation*}
\rho_1\vee\rho_2=\mathcal{P}_<(\rho_1\diamond\rho_2-i*\tau(\rho_1\diamond\rho_2))
\end{equation*}

for all $\rho_1$, $\rho_2\in\wedge^<T^*_\C M$. Here $\mathcal{P}_<\colon\wedge T^*_\C M\to\wedge^<T^*_\C M$ is the natural projection, $\diamond$ denotes the geometric product, and $\tau$ is the reversion anti-automorphism that acts as multiplication by $\smash{(-1)^{\binom{k}{2}}}$ on $k$-forms. By~\cite[Thm.\ 5.10]{Complex_Differential_Spinors_2026}, the equation $\nabla^{g,A}_X\eta=\frac{i\alpha+\mu}{2}X\cdot\eta$ is equivalent to:
\begin{equation*}
\nabla^{g,A}_X\rho=\frac{i\alpha+\mu}{2}(X^\flat\vee\rho-\rho\vee X^\flat)=\frac{i\alpha+\mu}{2}(2i*(X^\flat\wedge\rho))=(i\mu-\alpha)*(X^\flat\wedge\rho),
\end{equation*}

which proves the statement.
\end{proof}


\subsection{Equivalence with trans-Sasakian structures}
\label{sec:equivalence_trans-Sasakian}


Let $M$ be an oriented three-manifold. Recall that an almost contact metric structure on $M$ is a tuple $(g,\varphi,\xi,\zeta)$ consisting of a Riemannian metric $g$, a nowhere vanishing vector field $\xi\in\Gamma(TM)$, its metric dual one-form $\zeta \in \Omega^1(M)$, and an endomorphism $\varphi\in\Gamma(\End(TM))$, satisfying the following equations:
\begin{equation}\label{eq:metricontact}
\begin{gathered}
\varphi(\xi) = 0, \qquad \zeta(\varphi(X)) = 0, \qquad \varphi^2 (X) = -X + \zeta(X) \xi, \\
g(\varphi(X), Y) = -g(X, \varphi (Y) ), \qquad g(\varphi (X), \varphi (Y)) = g(X, Y) - \zeta(X)\zeta(Y)
\end{gathered}
\end{equation}

for every $X$, $Y\in\Gamma(TM)$. We will denote by $\cD := \mathrm{Ker}(\zeta) \subset TM$ the real distribution defined as the kernel of the almost contact one-form $\zeta\in \Omega^1(M)$. An almost contact metric structure is then said to be \emph{trans-Sasakian} if there exist functions $\alpha$ and $\mu$ on $M$ such that the following equation is satisfied:
\begin{equation}
\label{eq:trans}
(\nabla^g_X \varphi)Y = \alpha (g(X, Y)\xi - \zeta(Y)X) + \mu (g(\varphi (X), Y)\xi - \zeta(Y)\varphi (X)).
\end{equation}

Given real functions $\alpha$ and $\mu$, if this is the case, we say that $(g,\varphi,\xi,\zeta)$ is of type $(\alpha,\mu)$.


\subsubsection{Proof of the only if direction in Theorem~\ref{thm:ifandonlyif}}


Assume that the three-dimensional oriented Riemannian manifold $(M,g)$ admits a complex generalized spin-c Killing spinor $\eta\in\Gamma(S)$ with complex Killing function $i\mu-\alpha$, where $\alpha$, $\mu\in C^\infty(M,\R)$. Let $\rho\in\Omega^1_\C(M,\mathcal{L})$ be its corresponding complex-bilinear square. We will construct a trans-Sasakian structure $(g,\varphi,\xi,\zeta)$ of type $(\alpha,\mu)$ on $(M,g)$ directly from $\rho$.\medskip

First, we construct the almost contact one-form, whose metric dual we will identify with the characteristic vector field of the trans-Sasakian structure. Taking the exterior product of $\rho$ with its complex conjugate $\overline{\rho}$, we see that the line bundle factors trivialize under the canonical pairing $\mathcal{L}\otimes\overline{\mathcal{L}}\cong\mathcal{L}\otimes\mathcal{L}^*\cong\C$, yielding a globally well-defined real two-form $i\rho\wedge\overline{\rho}\in\Omega^2(M)$. Applying the Hodge star operator, we define the one-form $\zeta\in\Omega^1(M)$ by:
\begin{equation*}
\zeta:=\frac{i*(\rho\wedge\overline{\rho})}{\escal{\rho,\overline{\rho}}},
\end{equation*}

where $\escal{\rho,\overline{\rho}}\in C^\infty(M,\R)$ is the inner product defined by $g$. The dual $\xi:=\zeta^\sharp$ of $\zeta$ is a nowhere vanishing vector field on $M$ of unit length and $\zeta(\xi)=1$. We note that the fundamental two-form $\omega\in\Omega^2(M)$ in dimension three is given by:
\begin{equation*}
\omega:=-i\frac{\rho\wedge\overline{\rho}}{\escal{\rho,\overline{\rho}}}=-*\zeta.
\end{equation*}

\begin{lemma}\label{lemma:nabla_zeta}
The one-form $\zeta\in\Omega^1(M)$ constructed above from the complex-bilinear square $\rho$ satisfies:
\begin{equation*}
\nabla^g_X\zeta=-\alpha\iota_X\omega+\mu(X^\flat-\zeta(X)\zeta)
\end{equation*}

for all $X\in\Gamma(TM)$. Moreover, $\dd(\log\escal{\rho,\overline{\rho}})=2\mu\zeta$.
\end{lemma}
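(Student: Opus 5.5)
The plan is to differentiate the definition of $\zeta$ directly, using the equation of Proposition~\ref{prop:complex_Killing}:
\begin{equation*}
\nabla^{g,A}_X\rho=(i\mu-\alpha)*(X^\flat\wedge\rho).
\end{equation*}
Since $A$ is unitary, the parallel contraction $\cL\otimes\overline{\cL}\to\C$ commutes with covariant differentiation. Every quantity below is therefore a genuine complex form on $M$, and the Leibniz rule applies to both $\rho\wedge\overline{\rho}$ and $\escal{\rho,\overline{\rho}}$.

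The proof then has three steps.

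\begin{enumerate}
\item \emph{Norm function.} Write $h:=\escal{\rho,\overline{\rho}}>0$. Then
\begin{equation*}
X(h)=2\,\mathrm{Re}\,\escal{\nabla^{g,A}_X\rho,\overline{\rho}}=2\,\mathrm{Re}\bigl((i\mu-\alpha)\escal{*(X^\flat\wedge\rho),\overline{\rho}}\bigr).
\end{equation*}
In dimension three, $\escal{*(X^\flat\wedge\rho),\overline{\rho}}=*(X^\flat\wedge\rho\wedge\overline{\rho})$. Using $\rho\wedge\overline{\rho}=ih\,\omega=-ih*\zeta$, this equals $-ih\,\zeta(X)$ up to an orientation sign fixed by the conventions. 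This number is purely imaginary, so only the $\mu$-term survives in the real part, and we get $X(h)=2\mu h\,\zeta(X)$. This gives $\dd\log h=2\mu\zeta$.

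\item \emph{Two-form.} Compute
\begin{equation*}
\nabla^g_X(\rho\wedge\overline{\rho})=(i\mu-\alpha)*(X^\flat\wedge\rho)\wedge\overline{\rho}+(-i\mu-\alpha)\rho\wedge*(X^\flat\wedge\overline{\rho}).
\end{equation*}
Simplify it with a pointwise frame. Isotropy, nonvanishing, and reality of $\zeta$ force $\rho=\sqrt{h/2}\,(e^1+ie^2)\otimes\mathfrak l$ for an oriented orthonormal coframe $(e^1,e^2,e^3=\zeta)$ and a local unit section $\mathfrak l$ (this is the standard normal form of an isotropic complex covector in three dimensions). In this frame one only needs to evaluate $*(e^j\wedge(e^1+ie^2))$ for $j=1,2,3$, which is a finite check.

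\item \emph{Assemble.} Apply $*$ to the result of step 2, divide by $h$, and subtract the term coming from $\dd\log h$. The $\mu$-contributions should combine into $\mu(X^\flat-\zeta(X)\zeta)$: the $2\mu\zeta(X)\zeta$ coming from $\nabla h$ cancels the normal part. The $\alpha$-contributions should give $-\alpha\,\iota_X\omega$, which can be rewritten as $-\alpha*(\zeta\wedge X^\flat)$ up to sign.
\end{enumerate}

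The main obstacle I expect is keeping orientation and sign conventions consistent: the Hodge star on one-forms versus two-forms, the sign in $\omega=-*\zeta$, and $\gamma(i\,\mathrm{vol}_g)=-\Id$. These must match so that the coefficients come out as $+\mu$ and $-\alpha$ exactly. To guard against errors, I would verify the final formula on the single test vectors $X=e_1$ and $X=\xi$ in the adapted frame. The case $X=\xi$ should give $\nabla_\xi\zeta=0$, as required for a trans-Sasakian structure, and this provides a consistency check on the signs.
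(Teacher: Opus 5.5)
Your proposal is correct and follows essentially the paper's route. Both arguments first obtain $\dd\log h=2\mu\zeta$ from $X^\flat\wedge\rho\wedge\overline{\rho}=-ih\,\zeta(X)\,\mathrm{vol}_g$, and then differentiate $\zeta=i*(\rho\wedge\overline{\rho})/h$ using Proposition~\ref{prop:complex_Killing}; the identity holds exactly, with no residual orientation sign, once $\zeta$ and $\omega=-*\zeta$ are defined as in the paper, and the Clifford convention does not enter once that proposition is granted. The only difference is bookkeeping: you evaluate in the pointwise normal form $\rho=\sqrt{h/2}\,(e^1+ie^2)\otimes\mathfrak{l}$, whereas the paper works frame-free through $T(X,Y)=\overline{\rho}(X)\rho(Y)-h\,g(X,Y)$, and your frame computation does give $\mu(X^\flat-\zeta(X)\zeta)+\alpha\,\iota_X(*\zeta)=\mu(X^\flat-\zeta(X)\zeta)-\alpha\,\iota_X\omega$.
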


\begin{proof}
Let $c:=i\mu-\alpha\in C^\infty(M,\C)$. Then $\nabla^{g,A}_X\rho=c*(X^\flat\wedge\rho)$ for all $X\in\Gamma(TM)$ by Proposition~\ref{prop:complex_Killing}. We first compute the derivative of $u:=\escal{\rho,\overline{\rho}}$. Using $\escal{\rho,*(X^\flat\wedge\overline{\rho})}=-\escal{*(X^\flat\wedge\rho),\overline{\rho}}$ and $c-\overline{c}=2i\mu$ we obtain:
\begin{equation*}
X(u)=2i\mu\escal{*(X^\flat\wedge\rho),\overline{\rho}}.
\end{equation*}

Using $\rho\wedge\overline{\rho}=-iu*\zeta$ we obtain:
\begin{equation*}
\escal{*(X^\flat\wedge\rho),\overline{\rho}}\mathrm{vol}_g =X^\flat\wedge\rho\wedge\overline{\rho}=-iuX^\flat\wedge*\zeta=-iu\zeta(X)\mathrm{vol}_g ,
\end{equation*}

where $\mathrm{vol}_g $ is the Riemannian volume form. Hence $X(u)=2u\mu\zeta(X)$ for all $X\in\Gamma(TM)$, which is equivalent to $\dd(\log u)=2\mu\zeta$.\medskip

Now define the $(0,2)$-tensor field $T$ by $T(X,Y):=\iota_Y(*(*(X^\flat\wedge\rho)\wedge\overline{\rho}))$ for all $X$, $Y\in\Gamma(TM)$. Then:
\begin{align*}
T(X,Y)&=\iota_Y(*(*(X^\flat\wedge\rho)\wedge\overline{\rho}))=*(*(X^\flat\wedge\rho)\wedge\overline{\rho}\wedge Y^\flat)\\
&=-*(Y^\flat\wedge\overline{\rho}\wedge*(X^\flat\wedge\rho))=-*\escal{Y^\flat\wedge\overline{\rho},X^\flat\wedge\rho}\mathrm{vol}_g \\
&=-\escal{X^\flat\wedge\rho,Y^\flat\wedge\overline{\rho}}=\overline{\rho}(X)\rho(Y)-ug(X,Y).
\end{align*}

Using the above relation and:
\begin{align*}
\overline{\rho}(X)\rho(Y)+\rho(X)\overline{\rho}(Y)&=u(g(X,Y)-\zeta(X)\zeta(Y)),\\
\overline{\rho}(X)\rho(Y)-\rho(X)\overline{\rho}(Y)&=(\overline{\rho}\wedge\rho)(X,Y)=iu(*\zeta)(X,Y)
\end{align*}

we obtain:
\begin{align*}
(\nabla^g_X\zeta)Y&=-2\mu\zeta(X)\zeta(Y)+\frac{ic}{u}T(X,Y)-\frac{i\overline{c}}{u}\overline{T(X,Y)}\\
&=\alpha(*\zeta)(X,Y)+\mu(g(X,Y)-\zeta(X)\zeta(Y)).
\end{align*}

Therefore, using $\omega=-*\zeta$, we have $\nabla^g_X\zeta=-\alpha\iota_X\omega+\mu(X^\flat-\zeta(X)\zeta)$ for all $X\in\Gamma(TM)$.
\end{proof}

In particular, from Lemma~\ref{lemma:nabla_zeta} we get $\dd\zeta=-2\alpha\omega$, and since $\mu\zeta$ is exact we obtain:
\begin{equation*}
0=\dd(\mu\zeta)=\dd\mu\wedge\zeta+\mu\dd\zeta=\dd\mu\wedge\zeta-2\alpha\mu\omega.
\end{equation*}

Taking the wedge product with $\zeta$, and using that $\zeta\wedge\omega=-\mathrm{vol}_g $ is nowhere vanishing, we conclude that $\alpha\mu\equiv0$.

\begin{lemma}\label{lemma:nabla_omega}
The two-form $\omega\in\Omega^2(M)$ satisfies:
\begin{equation*}
\nabla^g_X\omega=\alpha X^\flat\wedge\zeta+\mu\iota_X\omega\wedge\zeta
\end{equation*}

for all $X\in\Gamma(TM)$.
\end{lemma}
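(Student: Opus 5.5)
The plan is to derive the formula from Lemma~\ref{lemma:nabla_zeta}. In dimension three $\omega=-*\zeta$, and the Hodge star is $\nabla^g$-parallel, so
\begin{equation*}
\nabla^g_X\omega=-*\nabla^g_X\zeta=\alpha\,*(\iota_X\omega)-\mu\,*(X^\flat-\zeta(X)\zeta).
\end{equation*}
This reduces the statement to two pointwise algebraic identities on a three-dimensional oriented Euclidean space with unit vector $\zeta$ and $\omega=-*\zeta$:
\begin{equation*}
*(\iota_X\omega)=X^\flat\wedge\zeta,\qquad -*(X^\flat-\zeta(X)\zeta)=\iota_X\omega\wedge\zeta .
\end{equation*}

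Both identities can be checked in a positively oriented orthonormal coframe $\{e^1,e^2,e^3\}$ with $e^3=\zeta$. With the convention $\ast e^3=e^1\wedge e^2$, we get $\omega=-e^1\wedge e^2$.

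For the first identity, the right-hand side vanishes when $X=\xi$, and so does $\iota_\xi\omega$. For $X=e_1$ we have $\iota_{e_1}\omega=-e^2$, hence $*(-e^2)=-e^3\wedge e^1=e^1\wedge e^3=X^\flat\wedge\zeta$. For $X=e_2$ we have $\iota_{e_2}\omega=e^1$ and $*e^1=e^2\wedge e^3=X^\flat\wedge\zeta$. This proves the first identity.

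For the second identity, $X=\xi$ gives $0=0$. For $X=e_1$ the left side is $-*e^1=-e^2\wedge e^3$, while the right side is $\iota_{e_1}\omega\wedge\zeta=-e^2\wedge e^3$. For $X=e_2$ the left side is $-*e^2=-e^3\wedge e^1$, while the right side is $e^1\wedge e^3=-e^3\wedge e^1$. This proves the second identity.

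Substituting both identities into the display above gives the lemma. The only delicate point is keeping the sign conventions for $*$ and $\omega$ consistent with those fixed earlier, namely $\zeta\wedge\omega=-\mathrm{vol}_g$ and $\omega=-*\zeta$. I would verify these against $\zeta\wedge\omega=-e^3\wedge e^1\wedge e^2=-\mathrm{vol}_g$, which holds.

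As an independent cross-check, one can differentiate $\omega=-iu^{-1}\rho\wedge\overline{\rho}$ directly. This uses Proposition~\ref{prop:complex_Killing} together with $\dd\log u=2\mu\zeta$ from Lemma~\ref{lemma:nabla_zeta}. That computation is longer, so the Hodge-dual route is the primary argument.
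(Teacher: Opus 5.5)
Your proof is correct and follows the paper's route: since $*$ is parallel and $\omega=-*\zeta$, you Hodge-dualize Lemma~\ref{lemma:nabla_zeta} and then verify the two resulting algebraic identities. The paper verifies those identities frame-free, using $\iota_X\mathrm{vol}_g=*X^\flat$ and $\mathrm{vol}_g=-\zeta\wedge\omega$, while you check them on a positively oriented coframe with $e^3=\zeta$; the two verifications are equivalent.
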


\begin{proof}
This follows from Lemma~\ref{lemma:nabla_zeta} and $\omega=-*\zeta$ by using $\iota_X\mathrm{vol}_g =*X^\flat$ and $\mathrm{vol}_g =-\zeta\wedge\omega$.
\end{proof}

Define the fundamental $(1,1)$-tensor field $\varphi\in\Gamma(\End(TM))$ by:
\begin{equation*}
\varphi(X):=(\iota_X\omega)^\sharp,
\end{equation*}

which automatically satisfies $g(\varphi(X),Y)=\omega(X,Y)$ and $g(\varphi(X),Y)+g(X,\varphi(Y))=0$ for all $X$, $Y\in\Gamma(TM)$. It is straightforward to check that $\varphi$ also satisfies:
\begin{equation*}
\varphi^2=-\Id+\zeta\otimes\xi,\qquad g(\varphi(X),\varphi(Y))=g(X,Y)-\zeta(X)\zeta(Y),\qquad\zeta\circ\varphi=0.
\end{equation*}

Hence, we recover all the identities in Equation~\eqref{eq:metricontact}. Furthermore, Lemma~\ref{lemma:nabla_omega} can be equivalently written as:
\begin{equation*}
(\nabla^g_X\varphi)Y=\alpha\big(g(X,Y)\xi-\zeta(Y)X\big)+\mu\big(g(\varphi(X),Y)\xi-\zeta(Y)\varphi(X)\big),
\end{equation*}

which is precisely the defining condition for the tuple $(g,\varphi,\xi,\zeta)$ given in Equation~\eqref{eq:trans} to constitute a well-defined trans-Sasakian structure of type $(\alpha,\mu)$ on $M$. Therefore, we have proved the following result.

\begin{prop}
\label{prop:onlyif}
Let $(M,g)$ be a three-dimensional spin-c Riemannian manifold admitting a complex generalized spin-c Killing spinor with complex Killing function $i\mu-\alpha$. Then $(M,g)$ is a trans-Sasakian manifold of type $(\alpha,\mu)$ such that $\mu\zeta$ is exact.
\end{prop}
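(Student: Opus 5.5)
The plan is to assemble the constructions of this subsection and add the one missing ingredient, which is exactness of $\mu\zeta$. Fix the complex generalized spin-c Killing spinor $\eta$ with complex Killing function $i\mu-\alpha$. Let $\rho\in\Omega^1_\C(M,\cL)$ be its complex-bilinear square, which is nowhere vanishing and isotropic. By Proposition~\ref{prop:complex_Killing}, $\rho$ satisfies $\nabla^{g,A}_X\rho=(i\mu-\alpha)*(X^\flat\wedge\rho)$. From $\rho$ we build $\zeta$, $\xi=\zeta^\sharp$, $\omega=-*\zeta$ and $\varphi=(\iota_{(\cdot)}\omega)^\sharp$ as above.

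\textbf{Step 1: almost contact metric structure.} We check the algebraic identities~\eqref{eq:metricontact} pointwise. Choose an orthonormal frame with $\xi=e_3$. Isotropy and non-vanishing of $\rho$ force, after a unit trivialization of $\cL$, $\rho=\lambda(e^1\pm ie^2)$ with $\lambda\neq0$; orientation fixes the sign. One then reads off that $\zeta=e^3$ has unit length, that $\omega=\pm e^1\wedge e^2$ up to the convention, and that $\varphi$ is rotation by $\pi/2$ on $\cD=\mathrm{span}(e_1,e_2)$ and zero on $\xi$. The identities $\varphi^2=-\Id+\zeta\otimes\xi$, the compatibility with $g$, and $\zeta\circ\varphi=0$ follow immediately.

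\textbf{Step 2: the trans-Sasakian equation.} Lemma~\ref{lemma:nabla_zeta} gives $\nabla^g\zeta$, and Lemma~\ref{lemma:nabla_omega} transports it to $\nabla^g\omega$ using parallelism of $*$ and $\mathrm{vol}_g=-\zeta\wedge\omega$. Since $g(\varphi X,Y)=\omega(X,Y)$ and $\nabla^g g=0$, we have $g((\nabla^g_X\varphi)Y,Z)=(\nabla^g_X\omega)(Y,Z)$. Evaluating $\alpha X^\flat\wedge\zeta+\mu\,\iota_X\omega\wedge\zeta$ on $(Y,Z)$ and raising the index then yields exactly~\eqref{eq:trans}. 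This is a routine index computation, which I would verify in the adapted frame of Step 1.

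\textbf{Step 3: exactness of $\mu\zeta$.} This comes from the second statement of Lemma~\ref{lemma:nabla_zeta}. The function $u=\escal{\rho,\overline{\rho}}$ is positive and globally defined, since the $\cL$-factors cancel against $\overline{\cL}$. Hence $\log u\in C^\infty(M)$, and $\mu\zeta=\dd\bigl(\tfrac12\log u\bigr)$ is globally exact, with no topological assumption on $M$.

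The main obstacle is sign and convention bookkeeping rather than any conceptual difficulty. The orientation convention $\gamma(i\mathrm{vol}_g)=-\Id$, the sign in $\omega=-*\zeta$, and the sign of $T(X,Y)$ in Lemma~\ref{lemma:nabla_zeta} must all agree so that the coefficients come out as $+\alpha$ and $+\mu$ in~\eqref{eq:trans}, and not with flipped signs. I would check this first in the explicit frame of Step 1, by computing $\nabla^g_X\zeta$ for $X=e_1$ and confirming that the $\alpha$-term equals $\alpha\,e^2$ (respectively $-\alpha\, e^1$) as predicted.
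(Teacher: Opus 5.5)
Your proposal is correct and follows the paper's own route. You construct $\zeta$, $\omega=-*\zeta$ and $\varphi$ from $\rho$, obtain \eqref{eq:trans} by rewriting Lemma~\ref{lemma:nabla_omega} (itself derived from Lemma~\ref{lemma:nabla_zeta}), and read off exactness of $\mu\zeta$ from $\dd(\log\escal{\rho,\overline{\rho}})=2\mu\zeta$, where $\escal{\rho,\overline{\rho}}>0$ is globally defined.

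Two small points to tidy up, neither a gap. In Step~1, build the orthonormal pair $e^1,e^2$ from $\mathrm{Re}\,\rho$ and $\mathrm{Im}\,\rho$ first and then compute $\zeta=e^3$; do not choose $e_3=\xi$ before knowing that $\xi$ is a unit vector annihilated by $\rho$. For your sanity check, in the paper's normalization ($\rho=\lambda(e^1-ie^2)$, $\mathrm{vol}_g=-e^1\wedge e^2\wedge\zeta$, hence $\omega=e^1\wedge e^2$) the $\alpha$-term of $\nabla^g_{e_1}\zeta$ is $-\alpha e^2$. Your predicted $+\alpha e^2$ corresponds to the opposite choice $\omega=-e^1\wedge e^2$.
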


We will refer to such trans-Sasakian manifolds $(M,g,\varphi,\xi,\zeta)$ whose one-form $\mu\zeta$ is exact as being \emph{$\mu$-exact}.


\subsubsection{Proof of the if direction in Theorem~\ref{thm:ifandonlyif}}


Conversely, assume that $(M,g,\varphi,\xi,\zeta)$ is a three-dimen\-sional $\mu$-exact trans-Sasakian manifold of type $(\alpha,\mu)$, where $\alpha$, $\mu\in C^\infty(M,\R)$. Hence, by assumption, $(g,\varphi,\xi,\zeta)$ satisfies Equations~\eqref{eq:metricontact} and~\eqref{eq:trans}. We will also assume that:
\begin{equation*}
\mu\zeta=\dd f
\end{equation*}

for a function $f\in C^\infty(M,\R)$.

\begin{remark}
The condition $\mu\zeta=\dd f$ implies that $\alpha\mu\equiv0$. Indeed, on a trans-Sasakian manifold of type $(\alpha,\mu)$ we have $\dd\zeta=-2\alpha\omega$ and $\dd\omega=2\mu\zeta\wedge\omega$, where $\omega=g(\varphi-,-)$. Taking the exterior derivative of $\dd f=\mu\zeta$, and then taking the exterior product with $\zeta$ gives $0=-2\alpha\mu\zeta\wedge\omega$. Since $\zeta\wedge\omega$ is nowhere vanishing, we conclude that $\alpha\mu\equiv0$.
\end{remark}

The restriction $J=-\varphi|_{\cD}$ defines an almost complex structure on the distribution $\cD:=\ker(\zeta)\subset TM$, which induces a splitting of its complexification:
\begin{equation*}
\cD_\C:=\cD\otimes\C=\cD^{(1,0)}\oplus\cD^{(0,1)},
\end{equation*}

where $\cD^{(1,0)}$ and $\cD^{(0,1)}$ are the $i$-eigenbundle and $(-i)$-eigenbundle of $J$, respectively. The almost contact metric structure induces a canonical $\mathrm{Spin}^c(3)$-structure on $M$. We will identify its characteristic line bundle with $\mathcal{L}\cong\cD^{(1,0)}$.\medskip

Let $\{e_1,e_2,e_3=\xi\}$ be a local adapted orthonormal frame of $(M,g)$ such that $J(e_1)=-e_2$ and $J(e_2)=e_1$, with corresponding dual coframe $\{e^1,e^2,e^3=\zeta\}$. We consider $(M,g)$ to be equipped with the orientation determined by the volume form:
\begin{equation*}
\mathrm{vol}_g =-e^1\wedge e^2\wedge\zeta.
\end{equation*}

Define the $\mathcal{L}$-valued complex one-form $\rho\in\Omega^1_\C(M,\mathcal{L})$ by:
\begin{equation*}
\rho:=\theta\otimes\mathfrak{l}:=e^f(e^1-ie^2)\otimes(e_1+ie_2),
\end{equation*}

which is nowhere vanishing and isotropic. Moreover, $\rho$ is globally defined since it is invariant under a unitary change of adapted frame.\medskip

Let $\mathcal{P}_\cD\colon TM\to\cD$ be the orthogonal projection onto the distribution $\cD=\ker(\zeta)$, defined explicitly by $\mathcal{P}_\cD(X):=X-\zeta(X)\xi$ for every $X\in TM$. We define a connection $\nabla^\cD$ on the real vector bundle $\cD\to M$ by projecting the Levi-Civita connection $\nabla^g$:
\begin{equation*}
\nabla^\cD_XY:=\mathcal{P}_\cD(\nabla^g_XY)
\end{equation*}

for every $Y\in\Gamma(\cD)$ and $X\in\Gamma(TM)$. Since $(M,g)$ is a trans-Sasakian manifold of type $(\alpha,\mu)$, the restricted endomorphism $J=-\varphi|_\cD$ acts as a parallel complex structure with respect to $\nabla^\cD$, meaning $\nabla^\cD_X(JY)=J\nabla^\cD_XY$. Extending $\nabla^\cD$ complex-linearly to the complexified bundle $\cD_\C$, it preserves the splitting $\cD_\C=\cD^{(1,0)}\oplus\cD^{(0,1)}$ and restricts to a canonical connection on the subbundle $\cD^{(1,0)}$. Define:
\begin{equation*}
\varpi_{12}(X):=g(\nabla^g_Xe_1,e_2)=g(\nabla^\cD_Xe_1,e_2).
\end{equation*}

Then $\nabla^\cD_Xe_1=\varpi_{12}(X)e_2$, $\nabla^\cD_Xe_2=-\varpi_{12}(X)e_1$, and $\nabla^\cD_X\mathfrak{l}=-i\varpi_{12}(X)\mathfrak{l}$. Setting $Y=\xi$ in Equation~\eqref{eq:trans}, and using $\varphi(\xi)=0$ and $\zeta(\xi)=1$, we obtain:
\begin{equation*}
\varphi(\nabla^g_X\xi) = \mu\varphi(X) - \alpha (\zeta(X)\xi-X ).
\end{equation*}

Since $\xi$ has unit length, $\nabla^g_X\xi$ belongs to $\cD := \ker(\zeta)$. Applying $\varphi$ to the identity above and
using $\varphi^2=-\Id+\zeta\otimes\xi$ therefore gives:
\begin{equation*}
\nabla^g_X\xi = -\alpha\varphi(X) +\mu (X-\zeta(X)\xi ) \quad \Longleftrightarrow \quad \nabla^g_X\zeta = -\alpha\iota_X\omega +\mu (X^\flat-\zeta(X)\zeta ).
\end{equation*}

Consequently:
\begin{align*}
\nabla^g_Xe_1&=\varpi_{12}(X)e_2+\big(\alpha g(e_1,\varphi(X))-\mu g(e_1,X)\big)\xi,\\
\nabla^g_Xe_2&=-\varpi_{12}(X)e_1+\big(\alpha g(e_2,\varphi(X))-\mu g(e_2,X)\big)\xi,\\
\nabla^g_X\xi&=-\alpha\varphi(X)+\mu(X-\zeta(X)\xi).
\end{align*}

Then $\nabla^g_X(e^1-ie^2)=i\varpi_{12}(X)(e^1-ie^2)-(\mu+i\alpha)(e^1-ie^2)(X)\zeta$ and:
\begin{equation*}
\nabla^g_X\theta=(\mu\zeta(X)+i\varpi_{12}(X))\theta-(\mu+i\alpha)\theta(X)\zeta,
\end{equation*}

where we have used the assumption $X(f)=\dd f(X)=\mu\zeta(X)$. Consider the connection one-form $A:=\alpha\zeta$. Define:
\begin{equation*}
\nabla^A_X\mathfrak{l}:=\nabla^\cD_X\mathfrak{l}+iA(X)\mathfrak{l}=-i\varpi_{12}(X)\mathfrak{l}+i\alpha\zeta(X)\mathfrak{l}
\end{equation*}

and $\nabla^{g,A}:=\nabla^g\otimes\nabla^A$. Then:
\begin{equation}\label{eq:nabla_rho}
\nabla^{g,A}_X\rho=(\nabla^g_X\theta)\otimes\mathfrak{l}+\theta\otimes(\nabla^A_X\mathfrak{l})=(\mu+i\alpha)(\zeta(X)\rho-\rho(X)\zeta).
\end{equation}

\begin{lemma}
\label{lemma:if}
The $\mathcal{L}$-valued complex one-form $\rho\in\Omega^1_\C(M,\mathcal{L})$ satisfies $\nabla^{g,A}_X\rho=(i\mu-\alpha)*(X^\flat\wedge\rho)$.
\end{lemma}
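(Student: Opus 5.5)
Given \eqref{eq:nabla_rho}, it suffices to prove the pointwise algebraic identity
\begin{equation*}
(i\mu-\alpha)*(X^\flat\wedge\rho)=(\mu+i\alpha)\bigl(\zeta(X)\rho-\rho(X)\zeta\bigr)
\end{equation*}
for every real $X$. Since $i\mu-\alpha=i(\mu+i\alpha)$, this reduces to the identity $i*(X^\flat\wedge\theta)=\zeta(X)\theta-\theta(X)\zeta$ for the ordinary complex one-form $\theta=e^f(e^1-ie^2)$; the line bundle factor $\mathfrak l$ plays no role because $*$ acts only on the form factor. By linearity in $X$, it is enough to check this on the adapted frame $X\in\{e_1,e_2,\xi\}$, and the positive factor $e^f$ can be dropped.

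With the orientation $\mathrm{vol}_g=-e^1\wedge e^2\wedge\zeta$, the Hodge star on two-forms is
\begin{equation*}
*(e^1\wedge e^2)=-\zeta,\qquad *(e^2\wedge\zeta)=-e^1,\qquad *(\zeta\wedge e^1)=-e^2.
\end{equation*}
Indeed, $e^1\wedge(-e^1)$ must equal $e^2\wedge\zeta\wedge\ldots$; I would verify each case using $\beta\wedge*\beta=|\beta|^2\mathrm{vol}_g$. The three frame checks are then as follows.

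For $X=\xi$: the left side is $i*(\zeta\wedge e^1)+*(\zeta\wedge e^2)=-ie^2+e^1=e^1-ie^2$, which agrees with the right side $\zeta(\xi)\theta-\theta(\xi)\zeta=e^1-ie^2$.

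For $X=e_1$: the left side is $i*(e^1\wedge(-ie^2))=*(e^1\wedge e^2)=-\zeta$, while the right side is $-\theta(e_1)\zeta=-\zeta$.

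For $X=e_2$: the left side is $i*(e^2\wedge e^1)=i\zeta$, while the right side is $-\theta(e_2)\zeta=i\zeta$.

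All three cases match, which proves the lemma.

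The only delicate point is the sign bookkeeping in the Hodge star under the nonstandard orientation. If a global sign came out wrong, I would recheck it against Lemma~\ref{lemma:nabla_zeta}, whose conventions, including $\omega=-*\zeta$ and $\zeta\wedge\omega=-\mathrm{vol}_g$, must be consistent with this choice of orientation.
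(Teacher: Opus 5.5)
Your proof is correct and is essentially the paper's argument: both take \eqref{eq:nabla_rho} as given and reduce the lemma to the pointwise identity $(i\mu-\alpha)*(X^\flat\wedge\rho)=(\mu+i\alpha)\bigl(\zeta(X)\rho-\rho(X)\zeta\bigr)$. The paper verifies this identity by splitting $X=X_\cD+\zeta(X)\xi$ and using $*\rho=i\zeta\wedge\rho$, whereas you evaluate on the frame $\{e_1,e_2,\xi\}$; your Hodge stars and all three frame checks are right, though the garbled sentence beginning ``$e^1\wedge(-e^1)$ must equal\dots'' should be replaced by the direct check $\beta\wedge*\beta=\mathrm{vol}_g$ for each unit basis two-form $\beta$.
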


\begin{proof}
Decompose the vector field $X$ as $X=X_\cD+\zeta(X)\xi$, where $X_\cD\in\Gamma(\cD)$. Then $X^\flat=X_\cD^\flat+\zeta(X)\zeta$. Applying the Hodge star operator, we get:
\begin{equation*}
*(X^\flat\wedge\rho)=*(X_\cD^\flat\wedge\rho)+\zeta(X)*(\zeta\wedge\rho).
\end{equation*}

Using $*e^1=-e^2\wedge\zeta$ and $*e^2=e^1\wedge\zeta$ we see that $*\rho=i\zeta\wedge\rho$. Hence $*(\zeta\wedge\rho)=-i\rho$. Multiplying by $(i\mu-\alpha)\zeta(X)$ we obtain:
\begin{equation*}
(i\mu-\alpha)\zeta(X)*(\zeta\wedge\rho)=(\mu+i\alpha)\zeta(X)\rho.
\end{equation*}

For the other term, we obtain:
\begin{equation*}
*(X_\cD^\flat\wedge\rho)=-\iota_{X_\cD}(*\rho)=i\rho(X_\cD)\zeta=i\rho(X)\zeta
\end{equation*}

since $X_\cD\in\Gamma(\cD)$ and $\rho(\xi)=0$. Multiplying by $i\mu-\alpha$ we obtain:
\begin{equation*}
(i\mu-\alpha)*(X_\cD^\flat\wedge\rho)=-(\mu+i\alpha)\rho(X)\zeta.
\end{equation*}

Combining both terms, we precisely obtain:
\begin{equation*}
(i\mu-\alpha)*(X^\flat\wedge\rho)=(\mu+i\alpha)(\zeta(X)\rho-\rho(X)\zeta),
\end{equation*}

which matches the expression for $\nabla^{g,A}_X\rho$ in Equation~\eqref{eq:nabla_rho}, thereby concluding the proof.
\end{proof}

Proposition~\ref{prop:onlyif} and Lemma~\ref{lemma:if} together prove Theorem~\ref{thm:ifandonlyif}.


\subsection{The counterexample}
\label{sec:counterexample}


Let us consider the following oriented and connected three-dimensional manifold:
\begin{equation*}
M:=\{(x,y,z)\in\R^3\mid x>0\text{ or }y<0\}\subset\R^3.
\end{equation*}

Hence, $M$ is the open \emph{L-shaped} domain that results after removing the quadrant $\left\{x\leq 0 , y\geq 0\right\}$. Choose non-constant functions $F(x)\in C^\infty((-\infty,0))$ and $G(y)\in C^\infty((0,\infty))$ that can be extended by zero to smooth functions on $\R$. Standard choices would be:
\begin{equation}
\label{eq:choiceFG}
F(x) = \begin{cases} 
k_F(x) e^{\frac{1}{x}} & \text{if } x < 0, \\ 
0 & \text{if } x \geq 0, 
\end{cases}\, \qquad G(y) = \begin{cases} 
k_G(y)e^{-\frac{1}{y}} & \text{if } y > 0, \\ 
0 & \text{if } y \le 0 
\end{cases}
\end{equation}

for smooth functions $k_F, k_G \colon \mathbb{R}\to \mathbb{R}$ with discrete zero sets. Define:
\begin{equation*}
\alpha(x,y,z) :=-\tfrac{1}{2}F'(x),\qquad\mu(x,y,z) :=G'(y).
\end{equation*}

Since $F(x)$ and $G(y)$ are both non-constant, neither $F'(x)$ nor $G'(y)$ is identically zero, and thus neither $\alpha$ nor $\mu$ is identically zero. Nevertheless, they satisfy $\alpha\mu\equiv0$. Indeed, since $\mathrm{supp}(\phi')\subseteq\mathrm{supp}(\phi)$ for any real-valued smooth function $\phi$, we have that $F'(x)\neq0$ implies $x<0$, and $G'(y)\neq0$ implies $y>0$. Now suppose that $\alpha(p)\neq0$ at some point $p=(x,y,z)\in M$. Then $x<0$, and since $p\in M$, we must have $y<0$. However, this implies that $G'(y)=0$ since $G'$ is supported in $(0,\infty)$. Hence $\mu(p)=0$. Similarly, if $\mu(p)\neq0$, then $\alpha(p)=0$. Hence, $\alpha\mu\equiv0$ on $M$.\medskip

Define:
\begin{equation*}
e^1:=e^{G(y)}\dd x,\qquad e^2:=e^{G(y)}\dd z,\qquad \zeta:=\dd y+F(x)\dd z
\end{equation*}

and set:
\begin{equation*}
g:=e^1\otimes e^1+e^2\otimes e^2+\zeta\otimes\zeta.
\end{equation*}

Then $\{e^1,e^2,\zeta\}$ is an orthonormal coframe of $(M,g)$ by definition. Choose the orientation:
\begin{equation*}
\mathrm{vol}_g :=-e^1\wedge e^2\wedge\zeta=e^{2G(y)}\dd x\wedge\dd y\wedge\dd z
\end{equation*}

and set $\omega:=e^1\wedge e^2=-*\zeta$. We then have $\dd\zeta=F'(x)\dd x\wedge\dd z=F'(x)e^{-2G(y)}e^1\wedge e^2$. In the region where $F'(x)\neq0$ we have $x<0$, thus in that region we have $y<0$, which implies $G(y)=0$. Consequently:
\begin{equation*}
\dd\zeta=F'(x)e^1\wedge e^2=-2\alpha\omega.
\end{equation*}

We also have $\dd\omega=2\mu\zeta\wedge\omega$. Take the trivial complex line bundle $\mathcal{L}=M\times\C$ and define:
\begin{equation*}
\rho:=\theta\otimes\mathfrak{l}:=e^{G(y)}(e^1-ie^2)\otimes\mathfrak{l},
\end{equation*}

where $\mathfrak{l}$ is a global unit section of $\mathcal{L}$. The one-form $\rho$ is nowhere vanishing, isotropic, and satisfies $\dd(\log\escal{\rho,\overline{\rho}})=2\mu\zeta$, where we have used that $F(x)G'(y)=0$. We define the real connection one-form $A:=-F'(x)\zeta= 2\alpha\zeta$ and set:
\begin{equation*}
\nabla^A \mathfrak{l} := iA \otimes \mathfrak{l} = 2i\alpha\zeta \otimes \mathfrak{l}.
\end{equation*}

Let us now compute $\nabla^{g,A}_X\rho$. First, let $\{e_1,e_2,e_3:=\xi\}$ be the orthonormal frame dual to the coframe $\{e^1,e^2,e^3:=\zeta\}$ and recall that $\dd e^1=-\mu e^1\wedge\zeta$, $\dd e^2=-\mu e^2\wedge\zeta$, and $\dd\zeta=-2\alpha e^1\wedge e^2$. Define the Levi-Civita connection one-forms by $\varpi_{ij}(X):=g(\nabla^g_Xe_i,e_j)$, which satisfy $\varpi_{ij}=-\varpi_{ji}$ and Cartan's equations:
\begin{equation*}
\dd e^i=\sum_{j=1}^3\varpi_{ij}\wedge e^j,\qquad i=1,2,3.
\end{equation*}

Hence, $\varpi_{12}=-\alpha\zeta$, $\varpi_{13}=-\mu e^1-\alpha e^2$, and $\varpi_{23}=\alpha e^1-\mu e^2$. Using these we compute:
\begin{align*}
\nabla^g_Xe^1&=\sum_{j=1}^3\varpi_{1j}(X)e^j=-\alpha\zeta(X)e^2-(\mu e^1(X)+\alpha e^2(X))\zeta,\\
\nabla^g_Xe^2&=\sum_{j=1}^3\varpi_{2j}(X)e^j=\alpha\zeta(X)e^1+(\alpha e^1(X)-\mu e^2(X))\zeta.
\end{align*}

Using these and $X(e^{G(y)})=e^{G(y)}\mu\zeta(X)$ we obtain:
\begin{equation*}
\nabla^g_X\theta=(\mu-i\alpha)\zeta(X)\theta-(\mu+i\alpha)\theta(X)\zeta.
\end{equation*}

Therefore:
\begin{equation*}
\nabla^{g,A}_X\rho=(\nabla^g_X\theta)\otimes\mathfrak{l}+\theta\otimes(\nabla^A_X\mathfrak{l})=(\mu+i\alpha)(\zeta(X)\rho-\rho(X)\zeta).
\end{equation*}

Using $*e^1=-e^2\wedge\zeta$ and $*e^2=e^1\wedge\zeta$ we compute $*(X^\flat\wedge\theta)=-i\zeta(X)\theta+i\theta(X)\zeta$. Therefore:
\begin{equation*}
(i\mu-\alpha)*(X^\flat\wedge\rho)=(\mu+i\alpha)(\zeta(X)\rho-\rho(X)\zeta)=\nabla^{g,A}_X\rho.
\end{equation*}

Using Proposition~\ref{prop:complex_Killing}, we then conclude that $(M,g)$ admits a complex generalized spin-c Killing spinor $\eta\in\Gamma(S)$ with complex Killing function $i\mu-\alpha$, where neither $\alpha$ nor $\mu$ is identically zero. This gives a counterexample to the conjecture stated in~\cite[Rmk.\ 3.5]{Grosse_Nakad_2015}. Here $M=M_{\alpha,0}\cup M_{0,\mu}$, where $M_{\alpha,0}:=\{\mu=0\}$ and $M_{0,\mu}:=\{\alpha=0\}$ are closed subsets of $M$ where $\eta|_{\operatorname{int}(M_{\alpha,0})}$ is a generalized spin-c Killing spinor with real Killing function $-\alpha$ and $\eta|_{\operatorname{int}(M_{0,\mu})}$ is a generalized spin-c Killing spinor with imaginary Killing function $i\mu$.\medskip

The $\mu$-exactness condition of Theorem~\ref{thm:ifandonlyif} can be checked explicitly. We have:
\begin{equation*}
\mu\zeta = G'(y)(\dd y + F(x)\dd z) = G'(y)\dd y + G'(y)F(x)\dd z
\end{equation*}

and thus wherever $G'(y) \neq 0$, we have $y > 0$, which implies $x > 0$ and therefore $F(x) = 0$. Thus, the cross-term $G'(y)F(x)$ vanishes everywhere on $M$, yielding $\mu\zeta = G'(y)\dd y = \dd(G(y))$ globally. If we choose $F$ and $G$ as in Equation~\eqref{eq:choiceFG} with $k_F\equiv k_G\equiv1$, then:
\begin{equation*}
M_{\alpha,0}  = M\setminus \left\{ x > 0 \, , \, y > 0 \right\},\qquad M_{0,\mu}  = M\setminus \left\{ x < 0 \, , \, y < 0 \right\}
\end{equation*}

and, consequently:
\begin{equation*}
M_{\alpha,0}\cap M_{0,\mu} = \left\{ x\geq 0 \, , \, y\leq 0 \right\}\cap M.
\end{equation*}

That is, the solution \emph{glues} a real generalized spin-c Killing spinor in the open quadrant $\left\{ x < 0 \, , \, y < 0 \right\}$ to an imaginary generalized spin-c Killing spinor in the open quadrant $\left\{ x > 0 \, , \, y > 0 \right\}$ via a parallel spinor in the closed quadrant $\left\{ x\geq 0 \, , \, y\leq 0 \right\}\cap M$.


\subsection{Completeness}
\label{sec:completeness}


The construction of the previous counterexample, which is clearly geodesically incomplete and has an incomplete characteristic vector field $\xi$, raises the question of whether a complete example exists. In this section, we prove that this is not possible. More generally, we prove Theorem~\ref{thm:completeReebnogo} in the introduction.

\begin{proof}
Since $(g,\varphi,\xi,\zeta)$ is trans-Sasakian, it satisfies the following equations:
\begin{equation*}
\dd\zeta=-2\alpha\omega, \qquad \dd\omega=2\mu\zeta\wedge\omega.
\end{equation*}

By assumption, $\mu\zeta$ is closed. Hence:
\begin{equation*}
0 =\dd(\mu\zeta)  =\dd\mu\wedge\zeta+\mu\dd\zeta = \dd\mu\wedge\zeta-2\alpha\mu\omega.
\end{equation*}

Taking the exterior product with $\zeta$, and using that $\zeta\wedge\omega$ is nowhere vanishing, we obtain $\alpha\mu\equiv0$, and therefore we conclude that $\dd\mu\wedge\zeta=0$. Evaluating at $\xi$, we obtain:
\begin{equation}
\label{eq:dmuReeb}
\dd\mu=\xi(\mu)\zeta,
\end{equation}

and consequently:
\begin{equation*}
\label{eq:horizontal-mu}
X(\mu)=0
\end{equation*}

for every $X\in\Gamma(\cD)$, where $\cD:=\ker(\zeta)$. On the other hand:
\begin{equation*}
0=\dd^2\zeta = -2\dd\alpha\wedge\omega-2\alpha\dd\omega = - 2 (\xi(\alpha)+2\alpha\mu ) \zeta\wedge\omega,
\end{equation*}

where we have used that $\dd\alpha\wedge\omega = \xi(\alpha)\zeta\wedge\omega$. Consequently:
\begin{equation*}
\xi(\alpha)+2\alpha\mu\equiv0.
\end{equation*}

Since $\alpha\mu\equiv0$, this equation reduces to $\xi(\alpha)=0$. Since we are assuming that $\xi$ is complete, it generates a globally defined flow:
\begin{equation*}
\Phi\colon\R\times M\longrightarrow M, \qquad (t,p)\longmapsto\Phi_t(p).   
\end{equation*}

Furthermore, we have:
\begin{equation*}
\mathscr{L}_\xi\zeta = \dd (\zeta(\xi))+\iota_\xi\dd\zeta = \iota_\xi(-2\alpha\omega) = 0,
\end{equation*}

and therefore the flow of the characteristic vector field $\xi$ preserves both $\zeta$ and its kernel:
\begin{equation*}
\label{eq:Reeb-preserves-D}
\Phi_t^*\zeta=\zeta, \qquad (\Phi_t)_*\cD=\cD
\end{equation*}

for every $t\in\R$. Suppose that $\alpha$ is not everywhere vanishing, and choose a point $p\in M$ such that $\alpha(p)\neq 0$. Let $U$ be the connected component of the open set $\{\alpha\neq0\}$ containing $p$. Restricting to $U$, we have:
\begin{equation*}
(\zeta\wedge\dd\zeta) \vert_U = -2\alpha\,\zeta\wedge\omega\neq 0.    
\end{equation*} 

Hence, the restriction $\cD|_U$ of $\cD$ to $U$ is a contact distribution and, in particular, its sheaf of local sections together with their Lie brackets generate the sheaf of local sections of $TU$, that is:
\begin{equation*}
\cD|_U+[\cD|_U,\cD|_U]=TU.
\end{equation*}

By a classical result of Chow and Rashevskii, it then follows that any two points of $U$ can be joined by a piecewise smooth curve whose tangent vector belongs to $\cD$.\medskip

Let $\mathcal{P}_{\cD}$ denote the pseudogroup generated by the local flows of local sections of $\cD \subset TM$. By definition, an element $\Psi\in\mathcal{P}_{\cD}$ is a finite composition of local flows of such vector fields, that is:
\begin{equation*}
\Psi = \Phi^{X_k}_{t_k}\circ\cdots\circ\Phi^{X_1}_{t_1},
\end{equation*}

where each $X_j$ is a local section of $\cD$ and each $\Phi^{X_j}_{t_j}$ is a local flow map. Every such element $\Psi \in \mathcal{P}_{\cD}$ is a diffeomorphism between open subsets of $M$, since each local flow map is a diffeomorphism onto its image. We define the orbit of $p$ with respect to $\mathcal{P}_{\cD}$ by:
\begin{equation*}
\mathcal{O}_{\cD}(p) := \left\{ \Psi(p)\mid\Psi\in\mathcal{P}_{\cD} \right\}.
\end{equation*}

Implicit in the previous definition of the orbit is the fact that we only consider the elements $\Psi\in \mathcal{P}_{\cD}$ that contain $p$ in their domain, which we denote by $\operatorname{Dom}(\Psi)$. Equivalently, $\mathcal{O}_{\cD}(p)$ is the set of points that can be joined to $p$ by a piecewise smooth curve tangent to $\cD$. By the classical result of Chow and Rashevskii applied to the connected manifold $U$ and the distribution $\cD|_U$, any two points of $U$ can be joined by a piecewise smooth curve tangent to $\cD|_U$. Hence:
\begin{equation}
\label{eq:Uaccessibilityorbit}
U\subseteq\mathcal{O}_{\cD}(p).
\end{equation}

We claim that $\mathcal{O}_{\cD}(p)$ is open in $M$. Let $q\in\mathcal{O}_{\cD}(p)$. By definition, there exists $\Psi\in\mathcal{P}_{\cD}$ such that:
\begin{equation*}
p\in\operatorname{Dom}(\Psi), \qquad \Psi(p)=q.
\end{equation*}

Since $\operatorname{Dom}(\Psi)$ and $U$ are open neighborhoods of $p$, we may choose an open neighborhood $V$ of $p$ such that $V\subseteq U\cap\operatorname{Dom}(\Psi)$. The set $\Psi(V)$ is then an open neighborhood of $q$, because $\Psi$ is a local diffeomorphism. On the other hand, every $m\in V$ belongs to $\mathcal{O}_{\cD}(p)$ by Equation~\eqref{eq:Uaccessibilityorbit}, and applying $\Psi$ shows that $\Psi(m)$ is in the orbit of $m$. Hence, by composition it follows that $\Psi(m)$ is in the orbit of $p$, that is $\Psi(m) \in \cO_{\cD}(p)$. Thus $\Psi(V)$ is an open neighborhood of $q$ contained in $\mathcal{O}_{\cD}(p)$, proving that $\mathcal{O}_{\cD}(p)$ is open.\medskip

Equation~\eqref{eq:dmuReeb} now implies that $X(\mu)=0$ for every local section $X$ of $\cD$. Hence $\mu$ is preserved by every local flow map $\Phi^X_t$ with $X$ tangent to $\cD$, and therefore it is also preserved by every element of $\mathcal{P}_{\cD}$. Since $\alpha(p)\neq 0$ and $\alpha\mu\equiv0$, we have $\mu(p)=0$. Consequently $\mu=0$ on $\mathcal{O}_{\cD}(p)$. Since $\mathscr{L}_\xi\zeta=0$, the flow $\Phi_t$ preserves $\cD$. It follows that conjugation by $\Phi_t$ preserves the pseudogroup associated with $\cD$:
\begin{equation*}
\Phi_t\circ\Psi\circ\Phi_{-t} \in \mathcal{P}_{\cD}
\end{equation*}

whenever $\Psi\in\mathcal{P}_{\cD}$ and the composition is defined. Consequently, the global flow $\Phi_t$ of $\xi$ maps $\cD$-orbits to $\cD$-orbits. More precisely:
\begin{equation}
\label{eq:Reebmapsorbits}
\Phi_t (\mathcal{O}_{\cD}(m) ) = \mathcal{O}_{\cD} (\Phi_t(m) )
\end{equation}

for every $m\in M$ and every $t\in\R$. On the other hand, since $\xi(\alpha)=0$, we have $\alpha(\Phi_t(p))=\alpha(p)\neq 0$ for every $t\in\R$. Therefore, the curve:
\begin{equation*}
s\mapsto\Phi_s(p), \qquad s\in[\min\{0,t\},\max\{0,t\}]
\end{equation*}

is contained in the subset $\{\alpha\neq 0\}\subset M$. Since it starts at $p$, it is contained in its connected component $U$. Hence:
\begin{equation*}
\Phi_t(p)\in U\subseteq\mathcal{O}_{\cD}(p)
\end{equation*}

and therefore:
\begin{equation*}
\mathcal{O}_{\cD}(\Phi_t(p))=\mathcal{O}_{\cD}(p).
\end{equation*}

Combining this with Equation~\eqref{eq:Reebmapsorbits}, we obtain:
\begin{equation}
\label{eq:orbitReebinvariant}
\Phi_t\bigl(\mathcal{O}_{\cD}(p)\bigr)=\mathcal{O}_{\cD}(p)
\end{equation}

for every $t\in\R$.\medskip

Consider now the complement:
\begin{equation*}
C:=M\setminus\mathcal{O}_{\cD}(p).
\end{equation*}

We observe that the set $C$ is invariant under every local flow of a local section of $\cD$. Indeed, if $q\in C$ and $\Phi^X_s(q)$ belonged to $\mathcal{O}_{\cD}(p)$, then applying the inverse local flow $\Phi^X_{-s}$ would imply that $q\in\mathcal{O}_{\cD}(p)$, which is a contradiction. Equation~\eqref{eq:orbitReebinvariant} similarly implies that $C$ is invariant under the global characteristic vector field flow. We claim that $C$ is open. Let $q\in C$, and choose local sections $X_1,X_2$ of $\cD$ which form a basis of $\cD$ on a neighborhood of $q$. For sufficiently small $(s_1,s_2,t)\in\R^3$, define:
\begin{equation*}
\Theta(s_1,s_2,t) := \Phi_t\circ \Phi^{X_2}_{s_2}\circ \Phi^{X_1}_{s_1}(q).
\end{equation*}

By the flow invariance of $C$, the image of $\Theta$ is contained in $C$. Furthermore:
\begin{equation*}
\dd\Theta_{(0,0,0)}(a,b,c) = a X_1(q) + b X_2(q) + c \xi(q).
\end{equation*}

Since $T_qM=\cD_q\oplus\R\xi(q)$, the linear map $\dd\Theta_{(0,0,0)}\colon\R^3\to T_qM$ is an isomorphism. The inverse function theorem therefore implies that the image of $\Theta$ contains an open neighborhood of $q$. This neighborhood is contained in $C$, and hence $C$ is open. Thus both $\mathcal{O}_{\cD}(p)$ and its complement $C$ are open. Since $M$ is connected and $\mathcal{O}_{\cD}(p)$ is nonempty, we conclude that $C$ is empty and $\mathcal{O}_{\cD}(p)=M$, which immediately implies that $\mu=0$ everywhere on $M$. Hence, we have proven that if $\alpha$ is not everywhere vanishing, then $\mu = 0$. Since the complementary case requires that $\alpha$ is everywhere vanishing, we conclude.
\end{proof}

\begin{remark}
Note that the statement $\mathcal{O}_{\cD}(p)=M$ proved above does not imply in general that $\alpha$ is nowhere vanishing, since the orbit $\mathcal{O}_{\cD}(p)$ need not be equal to $U$. Rather, it implies that $\mu$, which is constant along every local flow tangent to $\cD$ and vanishes at $p$, vanishes identically on $M$.  
\end{remark}

\begin{cor} 
Let $(M,g,\varphi,\xi,\zeta)$ be a connected three-dimensional trans-Sasakian manifold of type $(\alpha,\mu)$ such that the one-form $\mu\zeta$ is closed. If the Riemannian metric $g$ is complete, then either $\alpha$ is identically zero on $M$, or $\mu$ is identically zero on $M$. In particular, a complete connected $\mu$-exact trans-Sasakian three-manifold cannot have both $\alpha\not\equiv0$ and $\mu\not\equiv0$.
\end{cor}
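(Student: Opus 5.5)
The plan is to reduce the corollary to Theorem~\ref{thm:completeReebnogo}: all that needs checking is that completeness of $g$ forces the characteristic vector field $\xi$ to be complete. Once that is established, the hypotheses of Theorem~\ref{thm:completeReebnogo} hold verbatim. These are connectedness, the trans-Sasakian structure of type $(\alpha,\mu)$, and closedness of $\mu\zeta$. The theorem then gives $\alpha\equiv0$ or $\mu\equiv0$.

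\textbf{Step 1: $\xi$ is geodesic.} Set $Y=\xi$ in the identity
\begin{equation*}
\nabla^g_X\xi=-\alpha\varphi(X)+\mu(X-\zeta(X)\xi),
\end{equation*}
which was derived from~\eqref{eq:trans} in the proof of the if direction of Theorem~\ref{thm:ifandonlyif}. Taking $X=\xi$ and using $\varphi(\xi)=0$ and $\zeta(\xi)=1$ gives $\nabla^g_\xi\xi=0$. Hence every integral curve of $\xi$ is a unit-speed geodesic.

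\textbf{Step 2: completeness of $\xi$.} Let $\gamma\colon(a,b)\to M$ be a maximal integral curve of $\xi$. By Step 1 it is a geodesic, and by Hopf--Rinow every geodesic of a complete metric extends to all of $\mathbb{R}$. Let $\tilde\gamma$ be this extension. Since $\tilde\gamma$ is a geodesic with $\dot{\tilde\gamma}=\xi$ at some point, it is again an integral curve of $\xi$: both it and the integral curve of $\xi$ through that point solve the geodesic equation with the same initial data, so they agree by uniqueness. Maximality then forces $(a,b)=\mathbb{R}$. (Alternatively, $\xi$ has unit length, and a bounded vector field on a complete manifold is complete.) Either route is routine, so no step is a real obstacle.

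\textbf{Step 3: conclusion.} Apply Theorem~\ref{thm:completeReebnogo} to obtain the dichotomy. For the final sentence of the corollary, note that exactness of $\mu\zeta$ implies closedness, so the first part already excludes $\alpha\not\equiv0$ and $\mu\not\equiv0$ holding simultaneously on a complete connected $\mu$-exact trans-Sasakian three-manifold.
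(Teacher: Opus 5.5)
Your proposal is correct and follows essentially the same route as the paper: you get $\nabla^g_\xi\xi=0$ from the trans-Sasakian formula for $\nabla^g\xi$, conclude that the unit-speed integral curves of $\xi$ are geodesics and hence, by completeness, defined on all of $\mathbb{R}$, and then apply Theorem~\ref{thm:completeReebnogo}. Your extra detail in Step~2 (the uniqueness/maximality argument, or alternatively boundedness of $\xi$) spells out a routine point that the paper leaves implicit.
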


\begin{proof}
For a trans-Sasakian manifold we have:
\begin{equation*}
\nabla^g_X\xi = -\alpha\varphi(X) + \mu\bigl(X-\zeta(X)\xi\bigr).
\end{equation*}

Setting $X=\xi$ and using $\varphi(\xi)=0$ and $\zeta(\xi)=1$, we obtain $\nabla^g_\xi\xi=0$. Since $\xi$ has unit length, its integral curves are unit-speed geodesics. The completeness of $g$ implies that these geodesics are defined on all of $\R$. Hence the vector field $\xi$ is complete, and the conclusion follows from Theorem~\ref{thm:completeReebnogo}.
\end{proof}


\appendix



\section{On Gro\textnormal{\ss}e--Nakad's proof of Lemma 3.4}
\label{Ap:Lemma3.4}


Throughout this appendix we will use the convention, notation, and numbering of~\cite{Grosse_Nakad_2015}. In particular, we will work with the negative Clifford convention and their Killing function definition. We refer to Remark~\ref{rmk:convenciones} for the relation with the conventions in this article.\medskip

In Lemma 3.4, Gro\ss e and Nakad state that if $(M^n,g)$ is a Riemannian spin-c manifold carrying a complex generalized Killing spinor $\psi$ with Killing function $K=a+ib$, then $ab=0$. However, the examples obtained in Section~\ref{sec:2d} (of the current paper) and the next example show that this statement does not hold in dimension $n=2$. A careful inspection of the proof of Lemma 3.4 reveals an inconsistency in the normalization conventions used for the exterior derivative and the wedge product. Nevertheless, the pointwise conclusion $ab=0$ is still true in dimensions $n\geq3$, but it requires a different argument.\medskip

In this appendix, we first verify the explicit counterexample of Example~\ref{ex:spin-counterexample} at the spinor level. We then explain the issue in the proof of Lemma~3.4 and give an alternative proof of the statement.

\begin{example}\label{ex:spin-counterexample-neg}
Let $M:=\{z=x+iy\in \mathbb{C}\mid |z|< \tfrac{1}{2}\}$ and equip it with the Euclidean metric $g_{\mathrm{eucl}}$ and its standard spin structure, whose complex spinor bundle we denote by $\Sigma M$. We set $e_1 := \tfrac{\partial}{\partial x}$ and $e_2 := \tfrac{\partial}{\partial y}$ and consider the unit and $\nabla$-parallel spinor $\varphi_1$ such that $e_1 \cdot e_2 \cdot \varphi_1 = i \varphi_1$ and define $\varphi_2 := e_1 \cdot \varphi_1$. Then $\{ \varphi_1, \varphi_2\}$ is a $\nabla$-parallel orthonormal frame of $\Sigma M$, with:
\begin{equation*}
e_1 \cdot \varphi_1 = \varphi_2,\qquad e_1 \cdot \varphi_2 = -\varphi_1, \qquad e_2 \cdot \varphi_1 = -i\varphi_2,\qquad e_2 \cdot \varphi_2 = -i\varphi_1.
\end{equation*}

Let $\sqrt{z}$ be the principal branch of the square root on $\mathbb{C} \setminus\{x\leq0,\, y=0\}$. We define:
\begin{equation*}
\psi := \sqrt{1-z}\, \varphi_1 + \sqrt{\overline z + i}\, \varphi_2, \qquad a(z) + ib(z):= \frac{1}{2\sqrt{1-z}\sqrt{\overline{z}+i}}
\end{equation*}

and compute the $\nabla$-derivative of $\psi$:
\begin{align*}
    \nabla_{e_1} \psi &= - \frac{1}{2 \sqrt{1-z}} \varphi_1 + \frac{1}{2 \sqrt{\overline z + i}} \varphi_2 = (a(z) + ib(z))\, e_1\cdot \psi,\\
    \nabla_{e_2}\psi &= - \frac{i}{2 \sqrt{1-z}} \varphi_1 - \frac{i}{2 \sqrt{\overline z + i}} \varphi_2 = (a(z) + ib(z))\, e_2\cdot \psi.
\end{align*}

Thus, $\psi$ is a complex generalized Killing spinor on the spin manifold $(M,g_{\mathrm{eucl}})$. Moreover:
\begin{equation*}
a(0) +ib(0) = \frac{1}{2\sqrt{i}}= \frac{\sqrt{i}}{2i} = \frac{1-i}{2\sqrt{2}}
\end{equation*}

and, consequently:
\begin{equation*}
a(0) b(0) = \left(\frac{1}{2\sqrt{2}}\right) \left(\frac{-1}{2\sqrt{2}}\right) = -\frac{1}{8}\neq 0.
\end{equation*}

This example contradicts~\cite[Lemma~3.4]{Grosse_Nakad_2015} in dimension two.
\end{example}

We now explain why the proof of Lemma~3.4 does not hold in dimensions greater than two. First, the formula used in the proof of Lemma~3.2 corresponds to the following normalization of the exterior derivative of a $p$-form:
\begin{align*}
(p+1)\dd\omega_p(e_1,\ldots,e_{p+1})&=\sum_{i=1}^{p+1}(-1)^{i-1}e_i(\omega_p(e_1,\ldots,\hat e_i,\ldots,e_{p+1}))\\
&\quad+\sum_{i<j}(-1)^{i+j}\omega_p([e_i,e_j],e_1,\ldots,\hat e_i,\ldots,\hat e_j,\ldots,e_{p+1}).
\end{align*}

Thus, on $p$-forms, their exterior derivative is related to the standard one by:
\begin{equation*}
\dd_{\mathrm{GN}}=\frac{1}{p+1}\dd_{\mathrm{std}}.
\end{equation*}

If one wants the usual graded Leibniz rule:
\begin{equation*}
\dd(\alpha\wedge\beta)=\dd\alpha\wedge\beta+(-1)^p\alpha\wedge\dd\beta
\end{equation*}

to remain valid with this normalization, then the wedge product must be normalized accordingly. More precisely, for $\alpha\in\Omega^p(M)$ and $\beta\in\Omega^q(M)$ one has:
\begin{equation*}
\alpha\wedge_{\mathrm{GN}}\beta=\frac{p!q!}{(p+q)!}\alpha\wedge_{\mathrm{std}}\beta.
\end{equation*}

With these consistent conventions, the formulas of Lemmas~3.2 and 3.3 remain valid. The problem arises in the proof of Lemma~3.4, where the wedge product is evaluated using the standard unnormalized formula. In particular, the first formula on page 185 should contain an additional factor $1/n$:
\begin{equation*}
(\dd a\wedge_{\mathrm{GN}}\omega_{n-1})(e_1,e_2,\ldots,e_n)=\frac{1}{n}\sum_{j=1}^n(-1)^{j+1}\dd a(e_j)\,\omega_{n-1}(e_1,e_2,\ldots,\hat e_j,\ldots,e_n).
\end{equation*}

Indeed, this factor follows from:
\begin{equation*}
\dd a\wedge_{\mathrm{GN}}\omega_{n-1}=\frac{1!(n-1)!}{n!}\dd a\wedge_{\mathrm{std}}\omega_{n-1}=\frac{1}{n}\dd a\wedge_{\mathrm{std}}\omega_{n-1}.
\end{equation*}

With this additional factor, the computation in the proof of Lemma~3.4 yields:
\begin{equation*}
\escal{\dd a\cdot\psi,\psi}=2nabi\abs{\psi}^2.
\end{equation*}

This is precisely the identity obtained in Lemma~3.1(ii). Hence, instead of producing an additional equation from which one can conclude $ab=0$, the corrected computation merely reproduces Lemma~3.1(ii) and therefore gives no new information about $ab$.\medskip

We now prove that $ab=0$ for $n\geq3$. For convenience, throughout the remainder of this appendix we use the normalized operations $\dd=\dd_{\mathrm{GN}}$ and $\wedge=\wedge_{\mathrm{GN}}$ introduced above. With these operations, the formulas in Lemmas 3.2 and 3.3 remain exactly as stated. Fix a point $x\in M$ and suppose by contradiction that $c:=a(x)b(x)\neq0$. All identities below are evaluated at $x$, and equation numbers henceforth refer to those in~\cite{Grosse_Nakad_2015}.

\begin{lemma}
Let $n\geq5$ odd. Then $ab=0$.
\end{lemma}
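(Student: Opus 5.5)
Since I do not have the exact text of Lemmas~3.1--3.3 of~\cite{Grosse_Nakad_2015}, the plan below uses their content in the form described above: identities for the real spinorial forms $\omega_p(X_1,\ldots,X_p)=\escal{(X_1\wedge\cdots\wedge X_p)\cdot\psi,\psi}$ and their exterior derivatives.

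The idea is a pointwise contradiction at $x$, assuming $c=a(x)b(x)\neq0$. Lemma~3.1 fixes the relevant derivatives. The function $|\psi|^2$ has derivative proportional to $b\,\omega_1$. The one-form $\omega_1$ is the metric dual of the vector field $\escal{e_i\cdot\psi,\psi}e_i$, and it is real up to a factor $i$. Lemma~3.1(ii) gives $\escal{\dd a\cdot\psi,\psi}=2nab\,i|\psi|^2$, which ties $\dd a$ to $\omega_1$ through the product $ab$.

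First, I will take Lemmas~3.2 and~3.3 as the recursion $\dd\omega_p = (\text{const}_p)\,a\,\omega_{p+1}+(\text{const}'_p)\,b\,\omega_{p-1}$, with parity-dependent constants. Because of the parity, only one of $a$, $b$ survives for each $p$. For $n$ odd, the Clifford volume element acts as a scalar multiple of the identity on $\Sigma M$. This gives Hodge-type relations $\omega_{n-p}=\lambda_p\,{*}\omega_p$ with explicit constants $\lambda_p$ of modulus one.

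Second, I will differentiate along the chain in two ways and compare. For $n=2m+1\geq5$ I use the forms $\omega_{m}$ and $\omega_{m+1}$, which are Hodge dual to each other. I can compute $\dd\omega_m$ directly from the recursion. Alternatively, I can write $\dd\omega_m=\dd(\lambda\,{*}\omega_{m+1})=\pm\lambda\,{*}\,\dd^{*}\omega_{m+1}$ and evaluate the codifferential from the same recursion. Equating the two expressions gives an identity of the form $\kappa_1 a\,\omega_{m+1}+\kappa_2 b\,\omega_{m-1}=0$ with explicit nonzero constants.

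Third, I will apply $\dd$ once more and use $\dd^2=0$. This yields a relation in which the product $ab$ multiplies a single nonzero form, either $\omega_m$ or the combination $\omega_{m\pm2}$. Wedging with a suitable form, or contracting, produces the scalar $|\psi|^2\neq0$ times $ab$. Lemma~3.1(ii) then removes the $\dd a$-terms: substituting it back, the coefficient of $ab|\psi|^2$ is a rational function of $n$. This coefficient vanishes only at $n=1$ and, I expect, $n=3$, which is exactly why the lemma requires $n\geq5$.

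The main obstacle is this last coefficient bookkeeping. With the $\dd_{\mathrm{GN}}$, $\wedge_{\mathrm{GN}}$ normalizations, the factors $1/(p+1)$ and $p!q!/(p+q)!$ must be tracked carefully. As the preceding discussion shows, an error there can make the constraint on $ab$ look new when it merely reproduces Lemma~3.1(ii). I would first check the coefficient in the case $n=5$, $m=2$, and then generalize. If the middle-degree comparison collapses, my fallback is to use the degree-$2$ form instead. From $\dd^2\omega_1=0$ one gets $\dd a\wedge\omega_2\propto b\,\dd b\wedge\omega_0$-type terms. Pairing these with ${*}\omega_2\propto\omega_{n-2}$ via the Hodge relation, which requires $n-2\geq3$, i.e.\ $n\geq5$, would give an independent equation.
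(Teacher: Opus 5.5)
There is a genuine gap: none of the mechanisms you describe actually forces $ab=0$, and the decisive idea is missing.

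First, your recursion is degree-inconsistent. $\dd\omega_p$ is a $(p+1)$-form, so it cannot contain $\omega_{p-1}$; that term belongs to $\dd^*\omega_p$. Differentiating $\omega_p$ with the Killing equation gives $\dd\omega_p$ as a multiple of $b\,\omega_{p+1}$ for $p$ even and of $a\,\omega_{p+1}$ for $p$ odd. In the same way, $\dd^*\omega_{p+1}$ is a multiple of the \emph{same} coefficient times $\omega_p$. Hence your middle-degree comparison of $\dd\omega_m$ with $\pm\lambda\,{*}\,\dd^*\omega_{m+1}$ only returns the Hodge relation between $\omega_m$ and $\omega_{m+1}$, multiplied by $a$ or $b$. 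It is automatically satisfied, because both sides are correct expressions for the same form and $\omega_{n-p}=\lambda_p{*}\omega_p$ holds identically.

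Second, your plan to contract the $\dd^2=0$ identities down to a multiple of $ab|\psi|^2$ and eliminate $\dd a$ with Lemma~3.1(ii) is exactly the kind of computation that, as the appendix explains, only reproduces Lemma~3.1(ii). Your expectation that some coefficient rational in $n$ vanishes at $n=3$ is unsupported; the hypothesis $n\geq5$ has a degree-theoretic origin. Your fallback is also misstated. $\dd^2\omega_1=0$ gives a relation $\dd a\wedge\omega_2\propto ab\,\omega_3$, which is a $3$-form identity with no $b\,\dd b\wedge\omega_0$ term.

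What you are missing is to use the $\dd^2=0$ identities (Equations (7) and (8) of Gro\ss e--Nakad) to \emph{iterate} rather than to contract. If $c=a(x)b(x)\neq0$, they express each form through the previous one, alternately with $\dd b$ and $\dd a$:
$\omega_2=\frac{1}{2c}\dd b\wedge\omega_1$, $\omega_3=\frac{i}{2c}\dd a\wedge\omega_2$, $\omega_4=\frac{1}{2c}\dd b\wedge\omega_3$, and so on. Substituting gives $\omega_4=\frac{i}{8c^3}\dd b\wedge\dd a\wedge\dd b\wedge\omega_1=0$, since $\dd b$ occurs twice. The same recursion then gives $\omega_p=0$ for every $p\geq4$, and since $n\geq5$, in particular $\omega_n=0$.

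This contradicts an ingredient you did identify: in odd dimension the volume element acts as a scalar, so $\omega_n(e_1,\ldots,e_n)=i^{-\frac{n+1}{2}}|\psi|^2\neq0$ by Lemma~3.1(iv). This is also the real reason for the hypothesis. For $n=3$ the chain ends at $\omega_3=\frac{i}{4c^2}\dd a\wedge\dd b\wedge\omega_1$, which need not vanish, so that case needs the separate argument via $\omega_2\wedge\omega_1\neq0$.
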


\begin{proof}
We have assumed that $c\neq0$. Taking $k=0$ in Equation (7) gives $\omega_2=\frac{1}{2c}\dd b\wedge\omega_1$ and taking $k=0$ in Equation (8) gives $\omega_3=\frac{i}{2c}\dd a\wedge\omega_2$. Finally, taking $k=1$ in Equation (7) gives $\omega_4=\frac{1}{2c}\dd b\wedge\omega_3$. Substituting the first two expressions yields:
\begin{equation*}
\omega_4=\frac{i}{4c^2}\dd b\wedge\dd a\wedge\omega_2=\frac{i}{8c^3}\dd b\wedge\dd a\wedge\dd b\wedge\omega_1=0.
\end{equation*}

Arguing by induction we conclude that $\omega_p=0$ for every $p\geq4$. In particular, $\omega_n=0$. However, for a positively oriented orthonormal frame $\{e_1,\ldots,e_n\}$ of $(M,g)$ we have:
\begin{equation*}
e_1\cdots e_n\cdot\psi=i^{-\frac{n+1}{2}}\psi
\end{equation*}

since the complexified volume element $\omega_\C$ acts as the identity in odd dimensions. Hence:
\begin{equation*}
\omega_n(e_1,\ldots,e_n)=\escal{e_1\cdots e_n\cdot\psi,\psi}=i^{-\frac{n+1}{2}}\abs{\psi}^2\neq0
\end{equation*}

since $\psi$ has no zeros by Lemma 3.1(iv). Therefore, we get a contradiction and conclude that $ab=0$.
\end{proof}

\begin{lemma}
Let $n\geq4$ even. Then $ab=0$.
\end{lemma}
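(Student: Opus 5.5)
The plan is to reuse the mechanism of the odd-dimensional lemma and to replace the final contradiction, which there came from $\omega_n\neq0$, by an algebraic rank argument. That argument is needed because in even dimension the volume element no longer acts as a scalar on $\psi$. Fix $x\in M$ and assume $c:=a(x)b(x)\neq0$. As in the proof for $n\geq5$ odd, I take $k=0$ in (7) and (8) and $k=1$ in (7), assuming these equations hold in every dimension, and argue by induction. This gives
\begin{equation*}
\omega_2=\tfrac{1}{2c}\dd b\wedge\omega_1,\qquad \omega_3=\tfrac{i}{4c^2}\dd a\wedge\dd b\wedge\omega_1,\qquad \omega_p=0\ \text{ for } p\geq4 .
\end{equation*}
So at $x$, every form $\omega_p(X_1,\dots,X_p)=\escal{X_1\cdots X_p\cdot\psi,\psi}$ lies in the exterior algebra of the real subspace
\begin{equation*}
V:=\mathrm{span}\{\dd a,\dd b,\mathrm{Re}\,\omega_1,\mathrm{Im}\,\omega_1\}\subset T^*_xM,\qquad \dim V\leq4 .
\end{equation*}

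Next I would invoke the Fierz expansion of the rank-one endomorphism $\psi\otimes\psi^*$ of $\Sigma_xM$, which identifies it with $2^{-n/2}\sum_p(\pm)\,\omega_p$ via the Clifford isomorphism $\C\mathrm{l}(n)\cong\End(\Sigma_x)$. Only $\omega_0,\dots,\omega_3$ contribute, so $\psi\otimes\psi^*$ lies in the subalgebra $\C\mathrm{l}(V)\subset\C\mathrm{l}(n)$. Choose an orthonormal basis adapted to $V\oplus V^\perp$. Then $\Sigma_x$ decomposes as a $\C\mathrm{l}(V)$-module into copies of irreducibles, with multiplicity $2^{\lfloor (n-\dim V)/2\rfloor}$ or comparable; concretely, $\C\mathrm{l}(V)$ commutes with the even part of $\C\mathrm{l}(V^\perp)$, which acts nontrivially. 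Hence every nonzero element of $\C\mathrm{l}(V)$ has rank at least $2$ on $\Sigma_x$ whenever $\dim V\leq n-2$. But $\psi\otimes\psi^*$ has rank one, because $\psi(x)\neq0$ by Lemma 3.1(iv). For $n\geq6$ this is a contradiction, and it gives $ab=0$.

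The main obstacle is $n=4$, where $V$ may be all of $T^*_xM$ and the rank argument alone fails. Here I would use the two extra constraints that the vanishing of the top forms provides. First, $\omega_4=0$ says $\escal{\mathrm{vol}\cdot\psi,\psi}=0$, which means $|\psi_+|=|\psi_-|$ for the chiral components. Second, $\omega_3=\tfrac{i}{4c^2}\dd a\wedge\dd b\wedge\omega_1$ is the Hodge dual, up to a constant, of $\mathrm{vol}\cdot\psi$ paired against Clifford one-forms, and it must be compatible with $\omega_1$. I would compute in a frame adapted to $\dd a,\dd b$ and combine this with Lemma 3.1(ii), which gives $\escal{\dd a\cdot\psi,\psi}=2nabi|\psi|^2$, and its analogue for $\dd b$. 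The goal is to show that these force $\dd a\wedge\dd b\wedge\omega_1$ to be expressible through $\omega_1$ in a way incompatible with $c\neq0$. For instance, one could show that $\omega_2$ as given is degenerate, while the Fierz identity $\omega_2$-squared $\sim|\psi|^2$-type relation for $|\psi_+|=|\psi_-|$ forces $\omega_2$ to be nondegenerate in dimension four. This contradiction would complete the case $n=4$.
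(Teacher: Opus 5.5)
\textbf{Your proof has a genuine gap at $n=4$.} The case $n\geq 6$ is fine.

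For $n\geq 6$ your argument works and takes a different route from the paper's. Relations (7) and (8) follow from $\dd^2=0$ and hold in every dimension. They place $\omega_1,\omega_2,\omega_3$ in $\Lambda V\otimes\C$ and kill $\omega_p$ for $p\geq 4$. Then $\psi\otimes\psi^*$ is a rank-one element of $\C\mathrm{l}(V)$. This is impossible, because every irreducible $\C\mathrm{l}(V)$-summand of $\Sigma_x$ has multiplicity at least two when $\dim V\leq n-2$ and $n$ is even. It is this multiplicity count, not the commutation with $\C\mathrm{l}^0(V^\perp)$, that gives the rank bound. Note also that $\omega_1$ is purely imaginary, so in fact $\dim V\leq 3$.

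The gap is $n=4$, which you leave open, and the mechanism you propose for it is false. In dimension four, $\omega_4=0$ gives $|\psi_+|=|\psi_-|\neq 0$, and this makes $\omega_2$ \emph{degenerate}, not non-degenerate. Its self-dual and anti-self-dual parts are built from $\psi_+$ and $\psi_-$ separately, with norms proportional to $|\psi_+|^2$ and $|\psi_-|^2$. Hence $\omega_2\wedge\omega_2$ is a universal multiple of $(|\psi_+|^4-|\psi_-|^4)\mathrm{vol}_g$, which vanishes.

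Concretely, take $\Sigma_x=\mathbb{H}\oplus\mathbb{H}$ with $u\cdot(p,q)=(-\overline{u}q,\,up)$. Let the complex structure be right multiplication by $\mathrm{i}\in\mathbb{H}$, and the Hermitian product be the one with Euclidean real part. Take $e_0=1,e_1=\mathrm{i},e_2=\mathrm{j},e_3=\mathrm{k}$ and $\psi=(1,1)$. Then the only non-zero components of $\omega_1,\omega_2$ are $\omega_1(e_1)=2i$ and $\omega_2(e_2,e_3)=2i$.

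So decomposability of $\omega_2=\frac{1}{2c}\dd b\wedge\omega_1$ gives no contradiction. Your rank argument also fails here. In dimension four $\dim V$ can be $3=n-1$, and then $\C\mathrm{l}(V)\cong M_2(\C)\oplus M_2(\C)$ acts on $\Sigma_x$ with multiplicity one and contains rank-one elements.

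The example shows where the obstruction actually lies: in the relative position of $\omega_1$ and $\omega_2$. There one has $\omega_1\wedge\omega_2\neq 0$. By $\mathrm{Spin}(4)$-equivariance, and transitivity of $\mathrm{Spin}(4)=\mathrm{SU}(2)\times\mathrm{SU}(2)$ on pairs of unit Weyl spinors, this holds whenever $|\psi_+|=|\psi_-|\neq 0$. On the other hand, wedging (7) with $k=0$ by $\omega_1$ gives $2c\,\omega_2\wedge\omega_1=\dd b\wedge\omega_1\wedge\omega_1=0$. Hence $c=0$. This is the same mechanism as the paper's $n=3$ lemma.

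Some extra input of this kind is essential. The paper supplies it through Equation (9) of Gro\ss e--Nakad's Lemma 3.3, rather than through a top-degree non-vanishing.
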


\begin{proof}
The proof is analogous to the case $n\geq5$ odd, but here we use Equation (9) from Lemma 3.3.
\end{proof}

\begin{lemma}
Let $n=3$. Then $ab=0$.
\end{lemma}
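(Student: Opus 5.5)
The plan is to avoid the pointwise algebra of Equations (7)--(9) of~\cite{Grosse_Nakad_2015} in dimension three. That route breaks down because the chain $\omega_1,\omega_2,\omega_3$ is too short to produce a vanishing top-degree form. Instead I would reduce the claim to the three-dimensional analysis of Section~\ref{sec:complex_Killing}, and in particular to Lemma~\ref{lemma:nabla_zeta}, whose consequences are local and need no exactness hypothesis.

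First, the dictionary. By Remark~\ref{rmk:convenciones}, a complex generalized Killing spinor $\psi$ in the conventions of~\cite{Grosse_Nakad_2015}, with Killing function $K=a+ib$, is a complex generalized spin-c Killing spinor in the sense of Definition~\ref{def:complex-Kill-eq} with $K=\frac{i\mu-\alpha}{2}$. Hence $a=-\alpha/2$, $b=\mu/2$ and $ab=-\alpha\mu/4$, so it suffices to prove $\alpha\mu=0$ at the fixed point $x$. Since the claim is pointwise, I may restrict to a connected neighbourhood of $x$. If the chosen irreducible representation has $\gamma(i\,\mathrm{vol}_g)=+\Id$ rather than $-\Id$, I would reverse the orientation. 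This multiplies the Hodge star by $-1$ and leaves the conclusion $\alpha\mu=0$ unchanged, since that conclusion is insensitive to the sign of $\omega$. Finally, $\psi$ is nowhere vanishing by Lemma~3.1(iv), so its complex-bilinear square $\rho$ is a nowhere vanishing isotropic $\cL$-valued one-form. In particular $u=\escal{\rho,\overline{\rho}}>0$, and $\zeta$, $\omega=-*\zeta$ are well defined near $x$.

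Second, apply Proposition~\ref{prop:complex_Killing} and Lemma~\ref{lemma:nabla_zeta}. These give $\nabla^g_X\zeta=-\alpha\iota_X\omega+\mu(X^\flat-\zeta(X)\zeta)$ and $\dd(\log u)=2\mu\zeta$. The key observation is that the second identity makes $\mu\zeta$ exact on the neighbourhood, with primitive $\tfrac12\log u$. So no global $\mu$-exactness assumption is needed, and $\dd(\mu\zeta)=0$ holds identically. Antisymmetrising the first identity gives $\dd\zeta=-2\alpha\omega$. Then
\[
0=\dd(\mu\zeta)=\dd\mu\wedge\zeta-2\alpha\mu\,\omega .
\]
Wedging with $\zeta$ and using that $\zeta\wedge\omega=-\mathrm{vol}_g$ is nowhere zero yields $\alpha\mu=0$ near $x$, in particular at $x$. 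This contradicts the assumption $c=a(x)b(x)\neq0$.

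The main obstacle I expect is checking that the dictionary between the two convention systems is airtight. This concerns the sign and normalisation of the Clifford product, the choice of $\gamma$, and the fact that the Gro\ss e--Nakad $\omega_p$ are built from the Hermitian product whereas the paper uses the complex-bilinear square. The derivation above only uses the equivalence of the Killing equations stated in Remark~\ref{rmk:convenciones}, so I would verify that equivalence carefully.

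As a cross-check staying inside the appendix's framework, I would try to reproduce the same argument from Lemmas~3.2 and 3.3. The analogue of $\dd\log u=2\mu\zeta$ there should be an identity of the form $\dd|\psi|^2\propto b\,\omega_1$. The analogue of $\dd\zeta=-2\alpha\omega$ should be the relation $\dd\omega_1\propto a\,\omega_2$. Combining these with $\omega_3\propto|\psi|^2\mathrm{vol}_g$ should again force $ab=0$.
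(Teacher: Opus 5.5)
Your argument is correct, and it is exactly the deduction the paper already makes in Section~\ref{sec:equivalence_trans-Sasakian}, right after Lemma~\ref{lemma:nabla_zeta}. There, $\dd(\log u)=2\mu\zeta$ makes $\mu\zeta$ exact and $\dd\zeta=-2\alpha\omega$. Wedging $\dd(\mu\zeta)=0$ with $\zeta$ then gives $\alpha\mu\equiv0$, hence $ab=-\alpha\mu/4\equiv0$. The appendix proof instead stays inside the formalism of \cite{Grosse_Nakad_2015}. It wedges Equation (7) with $k=0$, namely $\dd b\wedge\omega_1=2c\,\omega_2$, with $\omega_1$. It then shows $(\omega_2\wedge\omega_1)(e_1,e_2,e_3)=-\tfrac13\abs{\psi}^4\neq0$, using the volume element to get $\omega_2=*\omega_1$ and an explicit Clifford-matrix computation of $\sum_j\omega_1(e_j)^2=-\abs{\psi}^4$. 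Underneath, these are the same identity in two languages. Equation (7) with $k=0$ amounts to $\dd^2\abs{\psi}^2=0$, i.e.\ closedness of $b\,\omega_1$. Since $\omega_1$ is a constant multiple of $\abs{\psi}^2\zeta$ and $\dd\abs{\psi}^2\wedge\mu\zeta=0$, this is equivalent to $\dd(\mu\zeta)=0$. Likewise, $\omega_2\wedge\omega_1\neq0$ plays the role of $\zeta\wedge\omega=-\mathrm{vol}_g\neq0$. Your version is shorter and shows the lemma as a consequence of the automatic $\mu$-exactness behind Theorem~\ref{thm:ifandonlyif}. However, it rests on the complex-bilinear-square machinery (Proposition~\ref{prop:complex_Killing}, hence \cite[Thm.~5.10]{Complex_Differential_Spinors_2026}) and on the convention dictionary you rightly flag. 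The paper's version avoids both and repairs the proof of \cite[Lemma~3.4]{Grosse_Nakad_2015} using only their Lemmas~3.2--3.3. For the same reason, your opening claim that the route through (7)--(9) breaks down in dimension three is overstated: only the degree-four induction fails. Your closing cross-check is essentially the paper's proof, where the nondegeneracy actually needed is $\omega_2\wedge\omega_1\neq0$.
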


\begin{proof}
In this case the degree-four induction argument does not apply, but Equation (7) with $k=0$ gives $\dd b\wedge\omega_1=2c\omega_2$. Let $\{e_1,e_2,e_3\}$ be a positively oriented orthonormal frame of $(M,g)$. In dimension three, the complexified volume element satisfies $e_1\cdot e_2\cdot e_3\cdot\psi=-\psi$. Hence:
\begin{equation*}
e_1\cdot e_2\cdot\psi=e_3\cdot\psi,\qquad e_2\cdot e_3\cdot\psi=e_1\cdot\psi,\qquad e_3\cdot e_1\cdot\psi=e_2\cdot\psi
\end{equation*}

and therefore:
\begin{equation*}
\omega_2(e_1,e_2)=\omega_1(e_3),\qquad \omega_2(e_2,e_3)=\omega_1(e_1),\qquad \omega_2(e_3,e_1)=\omega_1(e_2).
\end{equation*}

Using the above relations, and recalling the normalized convention for the wedge product, we obtain:
\begin{equation*}
(\omega_2\wedge\omega_1)(e_1,e_2,e_3)=\frac{1}{3}(\omega_1(e_1)^2+\omega_1(e_2)^2+\omega_1(e_3)^2).
\end{equation*}

We now prove that:
\begin{equation*}
\sum_{j=1}^3\omega_1(e_j)^2=-\abs{\psi}^4.
\end{equation*}

Indeed, at the point $x$ we may identify the spinor fiber with $\C^2$ and choose the Clifford representation:
\begin{equation*}
e_1\cdot=\begin{pmatrix}
    0&1\\
    -1&0
\end{pmatrix},\qquad e_2\cdot=\begin{pmatrix}
    0&i\\
    i&0
\end{pmatrix},\qquad e_3\cdot=\begin{pmatrix}
    i&0\\
    0&-i
\end{pmatrix},
\end{equation*}

for which $e_j\cdot e_j\cdot=-\Id$ and $e_1\cdot e_2\cdot e_3\cdot =-\Id$. Writing $\psi=\binom{z_1}{z_2}$ and recalling that the Hermitian product $\escal{\cdot,\cdot}$ is linear in the first entry, we obtain:
\begin{equation*}
\omega_1(e_1)=z_2\overline{z}_1-z_1\overline{z}_2,\qquad \omega_1(e_2)=i(z_2\overline{z}_1+z_1\overline{z}_2),\qquad \omega_1(e_3)=i(\abs{z_1}^2-\abs{z_2}^2).
\end{equation*}

Therefore:
\begin{equation*}
\sum_{j=1}^3\omega_1(e_j)^2=-4\abs{z_1}^2\abs{z_2}^2-(\abs{z_1}^2-\abs{z_2}^2)^2=-(\abs{z_1}^2+\abs{z_2}^2)^2=-\abs{\psi}^4.
\end{equation*}

Consequently:
\begin{equation*}
(\omega_2\wedge\omega_1)(e_1,e_2,e_3)=-\frac{1}{3}\abs{\psi}^4\neq0,
\end{equation*}

since $\psi$ has no zeros by Lemma 3.1(iv), thus $\omega_2\wedge\omega_1\neq0$. Finally, wedging $\dd b\wedge\omega_1=2c\omega_2$ with $\omega_1$, we obtain:
\begin{equation*}
0=\dd b\wedge\omega_1\wedge\omega_1=2c\omega_2\wedge\omega_1,
\end{equation*}

which implies that $c=0$. Therefore, we get a contradiction and conclude that $ab=0$.
\end{proof}


\bibliographystyle{myamsplain}
\bibliography{biblio}

\end{document}